\documentclass[11pt,a4paper,reqno]{amsart}
\usepackage[applemac]{inputenc}
\usepackage[T1]{fontenc}
\usepackage{amsmath}
\usepackage{amsthm}
\usepackage{amsfonts}
\usepackage{amssymb}
\usepackage{graphicx}
\usepackage{color}
\usepackage{xcolor}
\usepackage{cancel}
\usepackage{amsbsy}
\usepackage{mathrsfs}
\usepackage{bm}
\usepackage{hyperref}
\usepackage{mathtools}
\usepackage{graphicx}
\usepackage{caption}
\usepackage{subcaption}
\usepackage{enumerate}
\usepackage[shortlabels]{enumitem}
\newtheorem{theorem}{Theorem}[section]
\newtheorem{lemma}[theorem]{Lemma}

\newtheorem{proposition}[theorem]{Proposition}
\newtheorem{corollary}[theorem]{Corollary}
\newtheorem{definition}[theorem]{Definition}
\theoremstyle{remark}
\newtheorem{remark}[theorem]{\it \bf{Remark}\/}
\newenvironment{acknowledgement}{\noindent{\bf Acknowledgement.~}}{}
\numberwithin{equation}{section}
\catcode`@=11
\def\section{\@startsection{section}{1}%
  \z@{1.5\linespacing\@plus\linespacing}{.5\linespacing}%
  {\normalfont\bfseries\large\centering}}
\catcode`@=12
\newcommand{\be}{\begin{equation}}
	\newcommand{\ee}{\end{equation}}
\newcommand{\bea}{\begin{eqnarray}}
	\newcommand{\eea}{\end{eqnarray}}
\newcommand{\bee}{\begin{eqnarray*}}
	\newcommand{\eee}{\end{eqnarray*}}

\def\pa{\partial}

\def\na{\nabla}

\def\CC{\mathbb{C}}

\def\RR{\mathbb{R}}
\def\ZZ{\mathbb{Z}}

\def\om{\omega}

\def\ep{\varepsilon}

\def\calB{{\mathcal B}}
\def\calA{{\mathcal A}}
\def\calD{{\mathcal D}}

\def\calF{{\mathcal F}}
\def\calG{{\mathcal G}}

\def\calK{{\mathcal K}}
\def\calL{{\mathcal L}}

\def\calQ{{\mathcal Q}}

\def\calX{{\mathcal X}}
\def\calY{{\mathcal Y}}

\def\bmrmd{{\bm{\mathrm{d}}}}

\def\calX{{\mathcal X}}

\def\calS{{\mathcal S}}

\catcode`@=11
\def\supess{\mathop{\operator@font Sup\,ess}}
\catcode`@=12

\def\CC{\mathbb{C}}

\def\RR{\mathbb{R}}

\def\ZZ{\mathbb{Z}}

\def\ZZ{\mathbb{Z}}

\def\a{\alpha}

\def\e{\varepsilon}

\def\bar#1{{\overline #1}}

\def\R2+{\RR ^2_+}

\def\pa{\partial}
\def\na{\nabla}

\def\lim{\mathop{\rm lim}}

\def\supp{{\rm supp}~}
\def\sup{\mathop{\rm sup}}

\def\l{\lambda}

\def\log{{\rm log}}

\def\pa{\partial}

\def\pa{\partial}
\def\la{\langle}

\def\ra{\rangle}

\def\L{\mathcal L}

\def\wl{\omega_{\mathrm{Lamb}}}
\def\psilm{\psi_{\text{Lamb}}^{\text{moving}}}

\def\psil{\psi_{\text{Lamb}}}
\def\errl{\text{Err}_{\text{Lamb}}}

\newcommand{\scl}[2]{\left(#1\,,\,#2\right)} 
\newcommand{\da}[1]{\left|#1 \right|} 
\newcommand{\dr}[1]{\left(#1\right)} 
\newcommand{\one}{\mathbf{1}}
\label{sec 2}

\begin{document}

\title[]{On the stability of Lamb-Chaplygin dipole for the 2D Euler equation}

 \author[Z. Li]{Zexing Li}
 \address{Laboratoire Analyse, G\'eom\'etrie et Mod\'elisation,
 CY Cergy Paris Universit\'e,
 2 avenue Adolphe Chauvin, 95300, Pontoise, France}
 \email{zexing.li@cyu.fr}

\author[P. Song]{Peicong Song}
 \address{Applied and Computational Mathematics, California Institute of Technology, Pasadena, California 91125, USA}
\email{psong2@caltech.edu}

\author[T. Zhou]{Tao Zhou}
 \address{Department of Mathematics\\
National University of Singapore\\
Singapore\\
119076\\
Singapore}
\email{zhoutao@u.nus.edu}

\maketitle

\begin{abstract}

The Lamb-Chaplygin dipole is a traveling wave solution to the 2D incompressible Euler equation, whose orbital stability was established in \cite{abe2022stability, abe2025stability} assuming the odd symmetry in $x_2$ (O) and non-negativity in upper half-plane (N). This paper is devoted to further study of its stability in the following two aspects. Firstly, we prove the spectral stability of the linearized operator around the Lamb-Chaplygin dipole without conditions (O) or (N), based on the index theory established in \cite{LinZeng22}. This excludes an instability mechanism by unstable eigenmodes, and provides rigorous evidence towards nonlinear stability in this general setting. Secondly, assuming (O) and (N), we refine the orbital stability results in \cite{abe2022stability, abe2025stability} quantitatively by proving a linear bound of the fluctuation and a uniform control of the moving velocity. Instead of using a variational approach, our proof relies on the construction of a new coercive Lyapunov functional with a delicate mixed structure: it is quadratic in the interior region, but linear in the exterior region.
\end{abstract}

\section{Introduction}

\subsection{2D Euler equation and Lamb-Chaplygin dipole}
In this paper, we are concerned with the 2D incompressible Euler equation in vorticity form,
\be
\partial_t \omega + \na^\perp  \Delta^{-1} \omega \cdot \na \omega=0,
\label{eq: Euler}
\ee
where $\na^\perp = (-\partial_{x_2}, \partial_{x_1})$ and $\Delta^{-1} \omega = \left( \frac{1}{2 \pi}\log |\cdot| \right) * \omega $. 

Global well-posedness for the 2D Euler equation \eqref{eq: Euler} is classical. In particular, for initial vorticity $\omega_0 \in H^s(\RR^2)$ with $s>2$, global well-posedness follows from the Beale--Kato--Majda criterion \cite{zbMATH03915851} together with the conservation of $\|\omega(t,\cdot)\|_{L^\infty}$; see \cite{vicalbook,zbMATH02547738,zbMATH03011177}. On the other hand, if $\omega_0 \in L^1(\RR^2)\cap L^\infty(\RR^2)$, then global well-posedness is also ensured by Yudovich's theory \cite{Yudovich1963}.

Additionally, there are several invariant quantities for 2D Euler equation \eqref{eq: Euler}, and we list the following three quantities which will be widely used in our paper.

\noindent $\bullet$ Kinetic energy:
\[
E[\omega] = -\frac{1}{4} \int_{\RR^2} \omega(\mathbf{x}) \psi(\mathbf x) d \mathbf x.
\]
Since the stream function is given by
\[
\psi(\mathbf x):= \Delta_{\RR^2}^{-1} \omega(\mathbf x)  = \frac{1}{2 \pi} \int_{\RR^2} \log \left( |\mathbf x - \mathbf y| \right) \omega(\mathbf y ) d\mathbf y,
\]
the kinetic energy can be written as
\be
E[\omega] = - \frac{1}{8 \pi} \iint_{\RR^2 \times \RR^2} \log \left( |\mathbf x - \mathbf y| \right) \omega (\mathbf x) \omega(\mathbf y) d \mathbf x d \mathbf y.
\label{kinetic energy}
\ee

\noindent $\bullet$ Enstrophy:
\be
K[\omega] := \frac{1}{2} \| \omega \|_{L^2(\RR^2)}^2.
\label{enstrophy}
\ee

\noindent $\bullet$ Impulse:
\be
I[\omega] := \frac{1}{2} \int_{\RR^2} x_2 \omega(\mathbf x) d \mathbf x.
\label{impulse: def}
\ee

Notably, the 2D Euler equation \eqref{eq: Euler} admits an explicit traveling wave solution $\wl(\mathbf x - t \mathbf{e}_1)$ called Lamb-Chaplygin dipole, whose profile $\wl$ is compactly supported on $B(0,1)$ and odd in $x_2$, i.e., $\wl(x_1,x_2) = -\wl(x_1,-x_2)$, which can be explicitly represented by
\be
\wl (\mathbf x)
= 
\begin{cases}
     -\frac{2c_L J_1(c_L r) }{J_0(c_L)} \sin \theta, & \text{for } r < 1, \\
     0, & \text{for } r \ge 1,
\end{cases}
\label{Lamb dipole: vorticity}
\ee
where $c_L$ is the first positive zero of the first order Bessel function of the first kind $J_1(r)$ \footnote{In the later discussion, we denote $J_m(r)$ the $m$-th order Bessel function of the first kind.}. In particular, the related kinetic energy, enstrophy and impulse are respectively given by
\be
E[\wl] = \pi, \quad K[\wl] = \pi c_L^2 \quad \text{ and } \quad  I[\wl] = \pi.
\label{energy+impulse+enstrophy of lamb dipole}
\ee
Additionally, if we denote the related stream function in the original coordinate by  $\psil := \Delta_{\RR^2}^{-1} \wl$, then the stream function in the moving frame defined by
\be
\psilm(\mathbf x) := \psil (\mathbf x) + x_2, \quad \text{for all} \; \mathbf x \in \RR^2,
\label{moving frame: stream function}
\ee
has the explicit form
\be
\psilm(\mathbf x) = 
\begin{cases}
     \frac{2 J_1(c_L r) }{c_L J_0(c_L)} \sin \theta, & \text{for } r < 1, \\
     \left(r - \frac{1}{r} \right) \sin \theta = \left( 1 - r^{-2} \right) x_2, & \text{for } r \ge 1.
\end{cases}
\label{Lamb dipole: stream function}
\ee
In particular, inside of the unit disk $B$, $\psilm$ and $\wl$ satisfy the following algebraic relation
\be
\wl (\mathbf x) = - c_L^2 \psilm(\mathbf x), \quad \text{for all} \; \mathbf{x} \in B,
\label{lamb dipole: algebraic relation}
\ee
while outside the disk $\psilm$ has definite sign (on the upper half-plane). This structure motivates our definition of the following error function: 
\begin{align}
\begin{split}
\errl(\mathbf x) & := \wl(\mathbf{x}) + c_L^2 \psil(\mathbf{x}) + c_L^2 x_2\\
&= c_L^2 \mathbf{1}_{B^c}(\mathbf x) \psilm(\mathbf x) \ge 0, \quad \text{for all} \mathbf \; \mathbf x \in \RR_+^2,
\label{error: def}
\end{split}
\end{align}
where we denote $\RR^2_+:=\{(x_1,x_2) \in \RR^2:x_2>0\}$.

\begin{figure}[htbp]
	\begin{center}
	\includegraphics[scale=1]{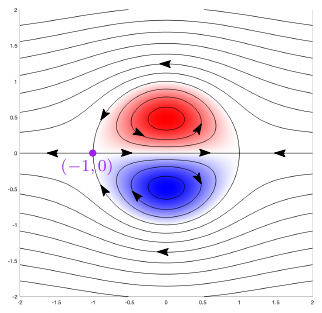}
	\caption{\label{fig_lamb} Illustration of the Lamb-Chaplygin dipole $\wl$ and its streamlines in the moving frame with velocity $1$.}
	\end{center}
\end{figure}

\subsection{Background on Lamb-Chaplygin dipole.}

\subsubsection{Lamb-Chaplygin dipole and its stability}\label{sec12}

\mbox{}

\vspace{0.1cm}

The Lamb-Chaplygin dipole has a long history and a lasting impact on the study of $2$D hydrodynamics. In 1906, H. Lamb noted a simple explicit traveling wave solution to \eqref{eq: Euler} in \cite{lamb1906hydrodynamics}, which turned out to be a special case of the non-symmetric Chaplygin dipoles independently discovered by  S. A. Chaplygin in 1903 in \cite{chaplygin1903} (see also \cite{chaplygin_1994}). The symmetric (with respect to the $x_2$-variable) dipoles are now referred to as the \textit{Lamb-Chaplygin dipoles} or the \textit{Lamb dipoles}. 

The dipole vortices, which include the Chaplygin-Lamb dipole as a special case, are considered as stable vortex structures in a certain sense, and there are many related experimental and numerical investigations \cite{afanasyev2006_experiments,heijst1994experiments,heijst1998numerics,Krasny_2021_numerics,Nielsen_numerics1997,protas2024linear,heijst1989dipole_experiments}. On the other hand, their mathematical stability is not fully understood.

Regarding the stability of Lamb-Chaplygin dipole, Abe and Choi \cite{abe2022stability} first rigorously proved the orbital stability of the Lamb-Chaplygin dipole under the following constraints of initial data: 
\begin{enumerate}[(a)]
    \item[(O)] \textit{Odd-symmetry in} $x_2$: $\om_0(x_1,x_2) = -\om_0(x_1,-x_2),$
    \item[(N)] \textit{Non-negativity in upper half-plane}: $\om_0(x_1,x_2)\geq 0$ for $x_2 \ge 0$,
    \item[(F)] \textit{Finite mass}: $\om_0\in L^1(\RR^2)$.
\end{enumerate}
They exploited the variational characterization of Lamb-Chaplygin dipole that $\wl$ is the maximizer of the kinetic energy $E[\omega]$ in the admissible class
\[
\calA_{\text{Lamb}} := \Big\{ \omega \in L^1 \cap L^\infty:  \text{$\omega$ odd in $x_2$},\; \omega \ge 0 \text{ on } \RR_+^2, \; I[\omega] = \pi \text{ and } K[\omega] =\pi c_L^2 \Big\}.
\]
In another work \cite[Theorem 5.1]{Wang_ConcentratedVortexPair24}, Wang obtained a similar orbital stability result of the Lamb-Chaplygin dipole using the results in \cite{burton2005isoperimetric,burton2021compactness}, with basically the same constraints
(O), (N), and (F),
but with slightly different norm for measuring stability.

However, removing the constraints (O), (N) and (F) seems to be extremely difficult. Very recently, there was progress made by Abe-Choi-Jeong \cite{abe2025stability}, and the authors removed the finite mass constraint (F).
Their proof is based on a new variational characterization of the Lamb-Chaplygin dipole that $\wl$ is the unique optimizer of the following energy interpolation inequality,
\be 
E[\omega] \le C_* \| x_2 \omega \|_{L^1(\RR_+^2)} \| \omega \|_{L^2(\RR_+^2)},
\label{energy ineq}
\ee 
for some $C_* > 0$ assuming $\omega$ odd in $x_2$, which was originally established in \cite[Corollary 2.5]{abe2025stabilitymultiplelambdipoles}.

Besides, for the Euler equation suitably restricted in a disk, we have a truncated Lamb dipole as a steady state, whose orbital stability without assumption (O) and (N) was established by Wang \cite{Wang_StabilityOnDisk25}. Nevertheless, the orbital stability of the Lamb-Chaplygin dipole on $\RR^2$ without (O) or (N) remains widely open.

When there are multiple Lamb-Chaplygin dipoles traveling in the same direction, orbital stability can also be established, provided the dipoles are sufficiently
separated and that the faster dipoles are positioned in front of the slower ones \cite{abe2025stabilitymultiplelambdipoles}. See also the orbital stability of two Lamb-Chaplygin dipoles traveling in opposite directions which are initially far away from each other \cite{choi2024stabilityvortexquadrupolesoddodd}.

For further results on other vortex dipoles for 2D Euler equations and vortex rings for 3D axisymmetric Euler equations, interested readers are referred to \cite{zbMATH06226652,zbMATH08159583,zbMATH07996976,zbMATH07782026,zbMATH08084253, choi2025vortex, choi2025existence, davila2023global,zbMATH08147947}.

\subsubsection{Applications of Lamb-Chaplygin dipole}

\mbox{}

\vspace{0.1cm}

The Lamb-Chaplygin dipole is a good candidate for constructing small-scale solutions to the 2D Euler equation. Precisely, Jeong and Choi \cite{zbMATH07488949} studied the filamentation near the Lamb-Chaplygin dipole and proved linear-in-time filamentation for arbitrarily small and localized perturbations, thus verifying the linear growth of vorticity gradient. Furthermore, since $(-1,0)$ is the saddle point of Lamb-Chaplygin dipole in the moving frame (see Figure \ref{fig_lamb}), together with the orbital stability established in \cite{abe2022stability, abe2025stability}, Jeong, Yao and the third author \cite{jeong2025superlinear} recently obtained the superlinear gradient growth for 2D Euler equation on $\RR^2$.

Furthermore, Lamb-Chaplygin dipole can also be applied to the ill-posedness for 2D Euler equation. In recent work \cite{brue2024flexibilitytwodimensionaleulerflows}, Bru\'{e}-Colombo-Kumar used the Lamb-Chaplygin dipole as a building block in the proof of the flexibility and nonuniqueness of $L^\infty L^p$-weak solutions (in terms of vorticity) for some $p>1$ to the $2$D Euler equations, using the convex integration scheme.

\subsection{Main results}

\subsubsection{On spectral stability of the Lamb-Chaplygin dipole}

\mbox{}

\vspace{0.1cm}
In the existing literature, all previous stability results for the Lamb-Chaplygin dipole rely crucially on the nonnegativity and odd-symmetry assumptions. To the best of our knowledge, no result has been established so far on the stability or instability of the Lamb-Chaplygin dipole once these two restrictions are removed. 
Our first result in this paper partially addresses this issue by studying the spectral stability of Lamb-Chaplygin dipole in more general functional classes (in particular, without symmetry or sign restrictions).

Before we present the main theorem, let us introduce the related linearized operator around the Lamb-Chaplygin dipole under 2D Euler equation \eqref{eq: Euler}. Precisely, if we take the ansatz $\omega(t, \cdot) = \left(\wl + h \right)(t, \cdot- t \mathbf{e}_1)$, then the residue $h$ solves the following equation:
\[
\partial_t h  + \calL h + \na h \cdot \na ^\perp \Delta^{-1} h =0,
\]
with the linear operator $\calL$ given by
\be
\calL h = - \mathbf{e}_1 \cdot \na h + \na h \cdot \na^\perp \Delta^{-1} \wl + \na \wl  \cdot \na^\perp \Delta^{-1} h.
\label{linearized op: def}
\ee
We denote by $\mathcal{L}: D(\calL) \subset L^2(\RR^2) \to L^2(\RR^2)$ the realization of the formal expression $\calL$ on $L^2(\RR^2)$ equipped with its maximal domain $D(\calL)$ defined by
\be \label{domain calL L2}
D\left( \calL \right)
:=
\Bigl\{
h\in L^2(\mathbb R^2)\;:\;
\calL h \in L^2(\mathbb R^2)
\text{ in the distributional sense}
\Bigr\}.
\ee
Besides, for any $\alpha>0$, define the weighted $L^2$-space as
\begin{equation}\label{definition of weighted L2 spaces X_alpha}
    X_\alpha:=\left\{f\in L^2(\RR^2):\int_{\RR^2}|f( \mathbf x)|^2 \la \mathbf x \ra ^{2 \alpha} \,dx<+\infty\right\} \text{ with } \la \mathbf x \ra := \sqrt{1+|\mathbf x|^2}.
\end{equation}
Then, we denote by $\calL_\a : D\left(\calL_\a \right) \subset X_\a \to X_\alpha$ the realization of the formal expression $\mathcal{L}$ on $X_\alpha$, defined on the maximal domain
\[
D\left(\calL_\a \right)
:=
\Bigl\{
h\in X_\alpha\;:\;
\calL h \in X_\alpha
\text{ in the distributional sense}
\Bigr\}.
\]
Now we are ready to introduce our first main result:
\begin{theorem}[Spectral stability of Lamb-Chaplygin dipole]
\label{main thm: linear}
For the linearized operators defined above, the following holds:\\
(\romannumeral 1) Mode stability in $L^2(\RR^2)$: The operator $\calL: D\left( \calL \right) \subset L^2(\RR^2)\to L^2(\RR^2)$ has the mode stability in $L^2(\RR^2)$ in the sense that it has no unstable eigenvalue. That is,
\[
\sigma_p \left(\calL \right)\cap \{\lambda\in\mathbb C:\Re\lambda<0\}=\varnothing.
\]
(\romannumeral 2) Spectral stability in $X_\alpha$: for any $\alpha>0$, $\calL_\alpha: D(\calL_\alpha) \subset X_\alpha\to X_\alpha$ is spectrally stable. That is,
\[
\sigma(\calL_\alpha)\cap \{\lambda\in\mathbb C:\Re\lambda<0\}=\varnothing.
\]

\end{theorem}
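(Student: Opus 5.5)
Following the abstract, we will use the index theory of Lin--Zeng \cite{LinZeng22} for linear Hamiltonian systems $\partial_t u = JLu$. The linearized operator can be put in this form. Inside $B$ we have $\nabla \wl = -c_L^2\nabla\psilm$, while outside $B$ the profile vanishes; so
\[
\calL h = \nabla^\perp \psilm\cdot\nabla h+\nabla\wl\cdot\nabla^\perp\Delta^{-1}h .
\]
The derivative $\nabla\wl$ contains a jump term concentrated on $\partial B$ (the jump of $-\frac{2c_LJ_1(c_L)}{J_0(c_L)}\sin\theta$), but $J_1(c_L)=0$, so $\wl$ is continuous across the circle. Hence $\nabla\wl=-c_L^2\mathbf 1_B\nabla\psilm$ with no singular part, and
\[
\calL h = \nabla^\perp\psilm\cdot\nabla h - c_L^2\mathbf 1_B\,\nabla\psilm\cdot\nabla^\perp\Delta^{-1}h .
\]
This suggests the standard decomposition $h = \mathbf 1_B h + \mathbf 1_{B^c}h$. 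On the interior we use the energy--Casimir quadratic form
\[
\mathbb L h = \frac{1}{c_L^2}h - \Delta^{-1}h \quad\text{(on $B$)},
\]
and we write $\calL = J\mathbb L$ with $J = -c_L^2\,\nabla^\perp\psilm\cdot\nabla(\cdot)$, which is anti-selfadjoint on $L^2(B)$ since $\psilm=0$ on $\partial B$. The exterior part is pure transport by $\nabla^\perp\psilm$, which has no feedback beyond the exterior component entering $\Delta^{-1}h$.

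\textbf{Step 1: reduce to the interior system.} For an eigenfunction with $\Re\lambda<0$, the exterior component satisfies $\lambda h+\nabla^\perp\psilm\cdot\nabla h=0$ on $B^c$. We plan to show this forces $h=0$ there. Along the exterior streamlines, which all come from and go to infinity (apart from the saddle/stagnation structure on $\partial B$), $h$ grows like $e^{-\lambda t}$ as we go backward along the flow, so it cannot be $L^2$ (respectively $X_\alpha$) unless it vanishes. We expect to make this precise by integrating along trajectories, using that the velocity tends to $\mathbf e_1$ at infinity.

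\textbf{Step 2: count negative directions.} The eigenproblem then becomes $\lambda h = J\mathbb L h$ on $L^2(B)$, and Lin--Zeng gives
\[
k_r+2k_c+2k_i^{\le0}+k_0^{\le0} = n^-(\mathbb L).
\]
So it suffices to show that $\mathbb L$ restricted to the relevant space has negative directions accounted for only by neutral modes. We will compute $n^-(\mathbb L)$ via the eigenvalue problem $-\Delta\phi = \mu\,\mathbf 1_B\phi$, reformulated on $B$ with the exterior Dirichlet-to-Neumann map, and a Fourier decomposition in $\theta$ using Bessel functions. The claim is that the only eigenvalues $\mu\le c_L^2$ come from the modes generated by symmetries: translations $\partial_{x_1}\wl,\partial_{x_2}\wl$ and the generalized kernel from the velocity/scaling family. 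These are exactly absorbed by $k_0^{\le0}$ or by the constraint coming from impulse conservation.

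\textbf{Main obstacle and conclusion.} The main obstacle is this precise Bessel-function count, i.e., showing $n^-(\mathbb L|_{\text{constrained}})=0$. We would reduce each Fourier mode $m$ to a transcendental condition of the form
\[
\mu^{1/2}J_m'(\mu^{1/2})/J_m(\mu^{1/2}) = -m,
\]
equivalently $J_{m-1}(\sqrt\mu)=0$, and use the interlacing of Bessel zeros to compare with $c_L$, the first zero of $J_1$. This leaves only $m=0,1,2$ at $\mu\le c_L^2$, corresponding to symmetries.

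Part (ii) then follows from part (i) together with essential-spectrum control. In $X_\alpha$, the transport part $\nabla^\perp\psilm\cdot\nabla$ generates a flow that pushes mass to infinity, and the weight makes its spectrum lie in $\Re\lambda\ge0$. The interior term is compact relative to it, since it has a localized coefficient and the smoothing $\Delta^{-1}$ is bounded into $L^\infty_{loc}$ from $X_\alpha$ when $\alpha>0$. Weyl's theorem then shows the spectrum in $\Re\lambda<0$ consists only of eigenvalues, which Step 1 and Step 2 exclude.
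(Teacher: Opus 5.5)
Your Step 1 and your argument for part (ii) are essentially the paper's. The paper proves the exterior vanishing by testing the renormalized identity $\mathbf v\cdot\nabla|u|^2=2\Re\lambda\,|u|^2$ against radial cutoffs; along characteristics, all you need is that the flow preserves measure on $B^c$, not the streamline geometry. There are two slips. With the paper's convention $\Delta^{-1}=\frac{1}{2\pi}\log|\cdot|*$, the interior form must be $c_L^{-2}-(-\Delta)^{-1}$; as written, your $\mathbb L$ has the wrong sign and $J\mathbb L\neq\calL$. Also, the compact term involves $\nabla\Delta^{-1}$, so compactness on $X_\alpha$ comes from Hardy--Littlewood--Sobolev plus $H^1$ bounds on $B$ and Rellich, not from $L^\infty_{loc}$ bounds.

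\textbf{The Bessel count is wrong at $m=0$.} The genuine gap is Step 2, where the count that decides the theorem is both miscomputed and left unspecified. Your uniform interface condition $\sqrt\mu J_m'(\sqrt\mu)/J_m(\sqrt\mu)=-m$ fails for $m=0$. The radial part of the potential of a source supported in $B$ is exactly $b\log r$ outside $B$, with no additive constant. So continuity at $r=1$ forces $J_0(\sqrt\mu)=0$, not $J_1(\sqrt\mu)=0$. Taken literally, your formula puts the radial mode in the kernel and gives $n^-=2$. The correct count is $n^-(\tilde L)=3$, from $\mathbf 1_BJ_0(\mu_{0,1}r)$ and $\mathbf 1_BJ_1(\mu_{0,1}r)\cos\theta$, $\mathbf 1_BJ_1(\mu_{0,1}r)\sin\theta$. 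The value $\mu=c_L^2$ occurs only for $m=2$ and spans $\ker\tilde L$, which contains $\partial_{x_1}\wl$.

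\textbf{The rotation direction is missing.} These negative eigenvectors are not ``modes generated by symmetries'', and neither $k_0^{\le0}$ nor an impulse constraint absorbs them automatically. The Lin--Zeng identity closes only if you exhibit three vectors in the generalized kernel of $\tilde\calL=J\tilde L$, independent modulo $\ker\tilde L$, on whose span $(\tilde L\cdot,\cdot)\le0$. Your list does not do this. $\partial_{x_1}\wl\in\ker\tilde L$ does not count, and $\wl,\partial_{x_2}\wl$ give only $k_0^{\le0}\ge2$. That leaves $k_r\le1$, which does not exclude a real unstable eigenvalue. The missing direction is the rotation generator $\mathbf x^\perp\cdot\nabla\wl$. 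The paper checks that $\tilde L\wl=-c_L^2\mathbf 1_Bx_2$, $\tilde L\partial_{x_2}\wl=-c_L^2\mathbf 1_B$, and $\tilde L(\mathbf x^\perp\cdot\nabla\wl)=-c_L^2x_1\mathbf 1_B$. Hence $\tilde\calL\wl=\partial_{x_1}\wl$, $\tilde\calL\partial_{x_2}\wl=0$, and $\tilde\calL(\mathbf x^\perp\cdot\nabla\wl)=-\partial_{x_2}\wl$. By parity in $x_1$ or $x_2$, the form is diagonal on the span, with entries $-c_L^2(\mathbf 1_Bx_2,\wl)<0$, $0$, and $-c_L^2(x_1\mathbf 1_B,\partial_\theta\wl)<0$. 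Only then does $k_r+2k_c+2k_i^{\le0}+k_0^{\le0}=3$ force $k_r=k_c=0$.

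A constrained energy--Casimir version of your idea would likewise need all three conserved moments $\int_Bh$, $\int_Bx_1h$, $\int_Bx_2h$. These are dual to exactly these three directions and require the same sign computations. So the Bessel count you flag as the main obstacle is routine. The decisive, missing step is identifying and verifying the three symmetry directions, rotation included.
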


\mbox{}

\noindent \textit{Comments on Theorem \ref{main thm: linear}.}

\mbox{}

\noindent \textit{1. Towards the stability of the Lamb-Chaplygin dipole without conditions.}

\vspace{0.2cm}

As discussed in the Subsection \ref{sec12}, the stability of the Lamb-Chaplygin dipole in $\RR^2$ beyond odd-symmetry and non-negativity constraints is widely open and unclear. To the best of our knowledge, Theorem \ref{main thm: linear} provides the first rigorous investigation of this problem from a spectral perspective. This result excludes one important instability mechanism by unstable eigenmodes, as in the Taylor-Green vortices \cite{cao2026instability} and Kelvin-Stuart vortices \cite{liao2023stability}; and provides rigorous evidence towards nonlinear stability in this general setting.

\mbox{}

\noindent \textit{2. Role of underlying space, and comparison with the numerical investigation \cite{protas2024linear}.}

\vspace{0.2cm}

The realization of $\calL$ is sensitive with respect to the underlying space. Due to the singularity of Biot-Savart law at low frequency, one can show $\calL$ is not closable with domain \eqref{domain calL L2}, and thereafter $\sigma(\calL) = \CC$\footnote{The similar issues about non-closability and $\sigma(\calL) = \CC$ also appear in $L^p(\RR^2)$ sense with $p > 2$.}. That restrains our discussion of $\calL$ only for the \textit{mode stability} in $L^2(\RR^2)$ sense. In contrast, for functions with slightly more integrability (or decay) near infinity, $\calL_\a$ becomes a closed operator so that we can discuss its \textit{spectral stability} in $X_\a$ for any $\a > 0$. 

We also mention the numerical evidence \cite{protas2024linear} for the existence of unstable eigenmodes of $\calL$, which seems incompatible with our result. Notice that \cite[Figure 4]{protas2024linear} also indicates a large region of $\sigma_{\text{ess}}(\calL)\cap \{\lambda \in \CC: \Re \l < 0 \}$, but the paper did not specify the underlying space and domain of $\calL$. A possible explanation for this discrepancy might be the pathological behavior of $\calL$ on $L^2$ causes numerical ambiguity between the essential spectrum and eigenvalue.

\mbox{}

\noindent \textit{3. Stability and instability in fluid dynamics.}

\vspace{0.2cm}

The stability problem is a central topic in fluid dynamics and is notoriously difficult. The linear stability of steady states is well understood only for unidirectional flows, such as shear flows and radial vortices. There is a vast literature in this direction, and we refer the reader to \cite{arnold1965,Bedrossian2019,friedlander1997nonlinearinstability,Grenier2016SpectralInstability,Grenier2020,Rayleigh_Stability_Instability1879,Thomson_1880,tollmien1935allgemeines} for representative works, without attempting to be exhaustive.

However, much less is known beyond unidirectional flows. A few exceptions include, but are not limited to, certain cat's-eye flows \cite{catseyeYudovich2000}, studied via the method of averaging; the Kelvin-Stuart vortices \cite{liao2023stability} and the Euler-Poisson system \cite{zbMATH07656138}, analyzed using the index theory \cite{LinZeng22} and the spectral theory for separable Hamiltonian systems \cite{zbMATH07656138}. Precisely, in \cite{LinZeng22}, the authors developed a fairly general theory for counting unstable eigenvalues of linearized operators written in Hamiltonian form. In many cases, however, this theory is difficult to apply in practice because the relevant indices are hard to compute rigorously. In addition, \cite{zbMATH07656138} provides a precise counting formula for unstable modes of separable Hamiltonian systems; this result is also not completely general, since it requires the nonnegativity of a certain block. Interested readers can see also \cite{Lin03_planeflowSIAM,Lin04_planeflowIMRN,Lin2004_planeflow_CMP} for more related instability criteria and their applications in the ideal plane flows.

Very recently, in \cite{cao2026instability}, the authors introduced an alternative criterion for proving the spectral instability of the Taylor-Green vortex in two-dimensional ideal fluids, together with the necessary computer-assisted analysis. Compared with \cite{zbMATH07656138}, the theory developed in \cite{cao2026instability} does not require the nonnegativity of a certain block, and therefore has the potential to apply to more general situations.

Nevertheless, we emphasize here that all of the spectral theories established previously heavily rely on the Hamiltonian structure, and to the best of the authors' knowledge, it is largely open when the linear system does not enjoy the Hamiltonian structure (e.g., patch solutions).

\mbox{}

\noindent \textit{4. Challenges and novelties of the proof.}

\vspace{0.2cm}

The proof of Theorem \ref{main thm: linear} is based primarily on the framework of index theory for Hamiltonian linear systems developed by Lin and Zeng \cite{LinZeng22}, see also Theorem \ref{thm: index thm}. The key difficulties lie in adapting this framework to the "nearly" Hamiltonian linear system \eqref{linearized operator: almost Hamiltonian form} and in computing the corresponding indices in the identity \eqref{index theorem identity}.

\vspace{0.2cm}

\noindent (1) From "nearly" Hamiltonian to Hamiltonian.

\vspace{0.1cm}

In the literature, it is well known that if a steady state for 2D Euler equation satisfies a pointwise algebraic relation between the vorticity and the stream function of the form $\omega^* = F(\psi^*)$, with $F$ surjective, then the corresponding linearized equation typically admits a Hamiltonian formulation (see \cite{cao2026instability, liao2023stability} for examples). In contrast, this algebraic relation holds only within the unit disk for Lamb-Chaplygin dipole (see \eqref{lamb dipole: algebraic relation}). As a result, it can only help us to write the linearized operator $\calL$ \eqref{linearized op: def} in a "nearly" Hamiltonian structure \eqref{linearized operator: almost Hamiltonian form}.

To fit \eqref{linearized operator: almost Hamiltonian form} in the genuine Hamiltonian framework, 
we observe the \textit{a priori localization} of unstable eigenfunction: any $L^2(\RR^2)$ unstable eigenfunction of $\calL$ must be compactly supported in the unit disk $B$, using the skew-adjointness of $J$ (see \eqref{linearized operator: almost Hamiltonian form} for its definition). This reduces the mode stability of $\calL$ to the restricted operator $\calL \circ \mathbf{1}_B$, for which the index theory of Lin and Zeng \cite{LinZeng22} is applicable.

\mbox{}

\noindent (2) Calculating the indices: offset by the symmetries.

\vspace{0.1cm}

Although the problem is reduced to a linear Hamiltonian formulation, the relevant indices are generally difficult to compute. In particular, $\tilde L=L\circ \mathbf{1}_B$ possesses three negative directions $ n^-(\tilde L)=3$, which seems to be large enough to present a tough obstacle to establishing the mode/spectral stability. These negative directions, however, can be offset by the symmetries of 2D Euler equation and hence do not contribute to any unstable eigenvalue. See Section \ref{sec 24} for details.

\mbox{}

\subsubsection{Orbital stability of Lamb-Chaplygin dipole with quantitative description under sign and symmetry condition}

\mbox{}

\vspace{0.1cm}

Our second result is a quantitative improvement of the pioneering orbital stability results from Abe-Choi-Jeong \cite{abe2022stability, abe2025stability}. The initial data we consider lives in
\be 
\calX_{\text{odd},+} = \left\{ \omega \in (L^1_{x_2} \cap L^2)(\RR^2): \omega(x_1, -x_2) = -\omega(x_1, x_2),\,\omega\big|_{\RR^2_+} \ge 0\right \},
\label{Xodd+}
\ee 
which is the same as \cite{abe2025stability}, namely with odd symmetry, non-negativity, and possibly infinite mass.
In particular, $E[\omega] < \infty$ for $\omega \in \calX_{\text{odd}, +}$ from \eqref{energy ineq}. 
Since there is no known well-posedness theory of strong solutions for \eqref{eq: Euler} in $\calX_{\text{odd},+}$, we define the following class of admissible solutions to which our theorem applies.

\begin{definition}[Admissible solution] \label{def: admissible solu} We call $\omega = \omega(t, x)$ an admissible solution of \eqref{eq: Euler} with initial data $\omega_0 \in \calX_{\text{odd},+}$ if 
\begin{enumerate}
 \item $\omega \in C^0([0,\infty),  L^2(\RR^2) \cap L^1_{x_2}(\RR^2))$, $\omega\big|_{t= 0} = \omega_0$, and $\omega(t,\cdot) \in \calX_{\text{odd},+}$ for all $t \ge 0$; 
  \item $\omega$ is a global-in-time weak vorticity solution of \eqref{eq: Euler}, namely for any $\varphi \in C^\infty_c([0, \infty) \times \RR^2)$, we have 
 \be \int_0^\infty \int_{\RR^2} \omega \left(\pa_t + \nabla^\perp \Delta^{-1} \omega \cdot \nabla\right)\varphi dxdt + \int_{\RR^2} \omega_0 \varphi(0, \cdot) dx = 0.
 \label{weak sol: def}
 \ee
 \item $E[\omega(t)] = E[\omega_0], \;K[\omega(t)] = K[\omega_0], \;I[\omega(t)] = I[\omega_0]$ for all $t \ge 0$. 
\end{enumerate}
\end{definition}

\begin{remark}[Existence of admissible solution] Classical weak solution theory implies that for any $\omega_0 \in \calX_{\text{odd},+}$, there exists at least one admissible solution in the sense of Definition \ref{def: admissible solu}, c.f. \cite[Proposition B.1]{abe2025stability}.
\end{remark}

Next, we define the following nonlinear distance functional on $L^1_{x_2} \cap L^2(\RR^2)$ as
\be 
 \bmrmd[ \omega] := c_L^{-2} \| \errl \cdot \omega\|_{L^1(\RR^2)} + \| \omega \|_{L^2(\RR^2)}^2.\label{eqdefdist} 
\ee
This distance functional is comparable with $(L^1_{x_2} \cap L^2)(\RR^2)$ norm in a nonlinear fashion (see Lemma \ref{lem: dist}). Now we can state our main theorem. 

\begin{theorem}[Quantitative orbital stability of Lamb-Chaplygin dipole with sign and symmetry conditions]
\label{thm: orbital stability sec3}
There exist $\delta_1 > 0$ and $C_{\rm{stab}}, C_1, C_2> 0$, such that the following holds: For any $\omega_0 \in \calX_{\mathrm{odd},+}$ satisfying
\be 
  \bmrmd[ \omega_0 - \wl] \le \delta_1, \label{initial smallness}
\ee 
then any admissible solution $\omega(t, \cdot)$ of \eqref{eq: Euler} with initial data $\omega_0$ in the sense of Definition \ref{def: admissible solu} satisfies 
\be
  \bmrmd[\omega(t, \cdot) -  (1+\a(t)) \wl (\cdot - \beta(t)) ] \le C_{\rm{stab}} \bmrmd[ \omega_0 - \wl],\quad \text{for all } \,\,t \ge 0,  \label{orbital stability: main result}
\ee
for some $(\a, \beta) \in C^1_{loc}(\RR\mapsto \RR \times \RR)$ with the bounds
\begin{align} 
  |\a(t)|&\le C_1 \bmrmd[ \omega_0 - \wl]^\frac 12, \quad \text{ for all } \,\,t \ge 0. 
  \label{smallness: parameters alpha}\\
 |\beta'(t)-1| + |\a'(t)| &\le C_2 \bmrmd[ \omega_0 - \wl]^\frac 12, \quad \text{ for all } \,\,t \ge 0. \label{smallness: parameters beta}
\end{align}
\end{theorem}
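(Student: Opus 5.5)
The plan is to replace the variational argument of \cite{abe2022stability, abe2025stability} by a conserved Lyapunov functional adapted to the Lamb-Chaplygin dipole. Set
\[
\Phi[\omega] := K[\omega] - 2c_L^2 E[\omega] + 2c_L^2 I[\omega].
\]
It is conserved along admissible solutions by Definition \ref{def: admissible solu}(3), and it is translation invariant in $x_1$. The first variations of $K$, $E$ and $I$ are $\omega$, $-\tfrac12\psi$ and $\tfrac12 x_2$ respectively. Hence the first variation of $\Phi$ at $\wl$ is exactly $\errl$ from \eqref{error: def}, and for odd $h$ we obtain the exact identity
\[
\Phi[\wl+h]-\Phi[\wl] = \int_{\RR^2}\errl\, h \,d\mathbf x + \tfrac12\|h\|_{L^2}^2 - \tfrac{c_L^2}{2}\|\nabla\Delta^{-1}h\|_{L^2}^2 .
\]
The linear term is the exterior part of the functional. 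Since $\errl$ is supported in $B^c$ and vanishes there only on $\{x_2=0\}$, and since $h=\omega\ge 0$ on $B^c\cap\RR^2_+$ with everything odd, this term is nonnegative and equals $\|\errl\,h\|_{L^1}$. This is the ``linear in the exterior, quadratic in the interior'' structure. Moreover, $\Phi$ remains a valid Lyapunov functional when it is centered at $\wl(\cdot-\beta)$.

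\textbf{Step 1 (coercivity).} We will prove that, under suitable orthogonality conditions on $h$, the following holds with $\eta:=\bmrmd[h]$:
\[
\tfrac12\|h\|_{L^2}^2 - \tfrac{c_L^2}{2}\|\nabla\Delta^{-1}h\|^2_{L^2} + \|\errl h\|_{L^1} \ge c\,\bmrmd[h] - C\,\eta^{3/2}.
\]
Split $h = h\mathbf 1_B + h\mathbf 1_{B^c}$.
\begin{itemize}
\item \emph{Interior.} The quadratic form $\|f\|^2 - c_L^2\|\nabla\Delta^{-1}f\|^2$ on odd functions supported in $B$ is the operator $\tilde L$ from the spectral part, restricted to odd functions. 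In the odd sector, only the directions generated by $\wl$ itself (scaling/amplitude) and $\partial_{x_1}\wl$ (translation) should be non-positive, since $x_2$-translation is killed by oddness. This requires a Bessel-function eigenvalue computation for $-\Delta$ on $B$ with the exterior harmonic extension. I will take this from the index analysis in Section \ref{sec 24}, restricted to odd functions.
\item \emph{Exterior and cross terms.} The term $\|\nabla\Delta^{-1}(h\mathbf 1_{B^c})\|^2$ and the cross term must be bounded by $\varepsilon\|h\|_{L^2}^2 + C_\varepsilon\|\errl h\|_{L^1}$. This uses the energy inequality \eqref{energy ineq}, and the fact that $\errl \gtrsim x_2(1-r^{-2})$ in $B^c\cap\RR^2_+$ gives control of $\|x_2 h\|_{L^1}$ away from $\partial B$. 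Near $\partial B$, $\errl$ degenerates linearly in $r-1$, so we interpolate there with $L^2$. I expect this to cost a loss which is still absorbed for small data.
\end{itemize}

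\textbf{Step 2 (modulation).} Decompose
\[
\omega(t)=(1+\a(t))\wl(\cdot-\beta(t)) + h(t).
\]
Choose $\beta(t)$ by an orthogonality condition to $\partial_{x_1}\wl(\cdot-\beta)$, solved via the implicit function theorem near the current orbit. This works by continuity in time because $\omega\in C^0(L^2\cap L^1_{x_2})$, and it only uses that the solution stays close; the a priori smallness closes by a bootstrap. Choose $\a(t)$ through a scalar constraint, for instance $\la h,\wl(\cdot-\beta)\ra=0$, so that the amplitude negative direction is removed from $h$. Then:
\begin{itemize}
\item Conservation of $I$ and $K$ gives $|\a| \lesssim \bmrmd[\omega_0-\wl]^{1/2}$, since $\a$ is controlled linearly by differences of conserved quantities plus $O(\|h\|^2)$ terms, which are $O(\bmrmd)$. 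This is \eqref{smallness: parameters alpha}.
\item Expand $\Phi[\omega(t)] - \Phi[(1+\a)\wl]$. The linear terms from $\a\ne0$ are $O(\a)\cdot\|\cdot\|$, and are handled using the conserved combination at $t=0$.
\item Step 1 then gives $\bmrmd[h(t)]\lesssim \bmrmd[\omega_0-\wl] + O(\a^2)$, which yields \eqref{orbital stability: main result}.
\end{itemize}

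\textbf{Step 3 (modulation equations).} Test the weak formulation \eqref{weak sol: def} against $\partial_{x_1}\wl(\cdot-\beta)$ and $\wl(\cdot-\beta)$; these are smooth enough after a mollification argument, with care at $\partial B$ where $\wl$ is only Lipschitz. The time derivatives of the orthogonality conditions then express $\beta'-1$ and $\a'$ as linear in $h$ plus quadratic terms. This gives \eqref{smallness: parameters beta}, and shows $(\a,\beta)\in C^1$.

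\textbf{Main obstacle.} The hard step is the coercivity in Step 1. The difficulty is to rigorously compute the non-positive directions of the interior form in the odd class, and to control the exterior energy and cross terms by the merely linear quantity $\|\errl h\|_{L^1}$ near the degenerate circle $\partial B$. I would first attempt this with a Hardy-type inequality for $\nabla\Delta^{-1}$ in weighted $L^1$, combined with \eqref{energy ineq}.
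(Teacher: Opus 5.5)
\textbf{There is a genuine gap in Step 1, caused by your orthogonality condition for the amplitude.}

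In the odd sector the interior form $(\tilde\calS h,h)$ does have exactly one negative eigenvalue. Its eigenfunction, however, is $g_1=\mathbf 1_B J_1(\mu_{0,1}r)\sin\theta$, not $\wl$. The function $\wl$ is only a negative direction, with $(\tilde\calS\wl,\wl)=-\tfrac12 I[\wl]$. $L^2$-orthogonality to a non-eigenvector $v$ removes the negative direction only if $(\tilde\calS^{-1}v,v)<0$ (Weinstein's criterion). For $v=\wl$ this quantity vanishes exactly, because the dilation mode satisfies
\[
\tilde\calS\big(\wl+\mathbf x\cdot\nabla\wl\big)=c_L^{-2}\,\wl,\qquad \big(\wl+\mathbf x\cdot\nabla\wl,\;\wl\big)=0 .
\]
The second identity is the $L^2$ scaling invariance in two dimensions. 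Hence $h=\wl+\mathbf x\cdot\nabla\wl$ is orthogonal to both $\wl$ and $\pa_{x_1}\wl$, yet $\calQ[h]=(\tilde\calS h,h)=0$.

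This is not an artifact of linearization. The speed-one dipoles $\omega_a:=a^{-1}\wl(\cdot/a)$ are admissible travelling waves with $K[\omega_a]=\pi c_L^2$ and $E[\omega_a]=I[\omega_a]=\pi a^2$, so $\calF[\omega_a]=\calF[\wl]$ for every $a$. Your modulation gives $\beta=0$ and $\a=O((a-1)^2)$. Then $\calQ[h]=O(|a-1|^3)$, while $\bmrmd[h]\gtrsim (a-1)^2$. So the inequality you claim in Step 1 is false on exactly the $h(t)$ the bootstrap needs it for.

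\textbf{The missing idea is the Weinstein-type condition the paper uses.} Impose $h\perp\mathbf 1_B x_2=-2\tilde\calS\wl$ instead of $h\perp\wl$. Then $(\tilde\calS^{-1}\mathbf 1_Bx_2,\mathbf 1_Bx_2)=-2I[\wl]<0$, which is the sign condition $F(0)<0$ in the proof of Proposition \ref{prop: calS coercivity}, and coercivity is strict.

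With this choice, $\a$ is read off from the impulse: $I[\omega]-I[\wl]=\a I[\wl]+I[h]$. The cross term $-\a I[h]$ in $\calF[\omega]-\calF[\wl]$ is of the same order as $\bmrmd$. So your ``$O(\a)\cdot\|\cdot\|$'' terms cannot simply be absorbed; they must be cancelled exactly. The paper does this with the quadratic impulse correction in $\calA(t)$.

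If you want to keep $h\perp\wl$, you would instead need coercivity of the augmented form $\calQ[h]+\frac{1}{2I[\wl]}I[h]^2$. This is the coefficient that exactly cancels the $\a$-terms, and the interior part of this form is nonnegative with kernel spanned by $\wl$ and $\pa_{x_1}\wl$. That is a different statement from your Step 1.

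\textbf{Strict coercivity is also what Step 3 needs.} Since $\pa_{x_1}\wl$ jumps across $\pa B$, the map $\beta\mapsto(\omega(\cdot+\beta),\pa_{x_1}\wl)$ is not $C^1$ on $L^2$, and the weak formulation tested against it is ill-defined. The paper replaces the directions by $C^\infty_c$ approximations $W_1,W_2$. This is legitimate only because the coercivity with the exact directions is strict, hence stable under small $L^2$ perturbations. A degenerate coercivity like yours would not survive that replacement.

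Your treatment of the exterior and cross terms (the energy inequality plus a thin annulus near $\pa B$) is fine. It matches Steps 2--3 of the proof of Proposition \ref{prop: coercivity}.
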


\mbox{}

\noindent \textit{Comments on Theorem \ref{thm: orbital stability sec3}}.

\mbox{}

\noindent \textit{1. Quantitative orbital stability.}

\vspace{0.2cm}

Our result improves \cite{abe2022stability,abe2025stability} quantitatively in the following two senses. Firstly, in terms of the estimate of orbital stability, \cite{abe2022stability,abe2025stability} proves
\be 
 \inf_{\beta \in \RR} \| \omega(t,\cdot) - \wl(\cdot - \beta \mathbf{e}_1) \|_{(L^1_{x_2} \cap L^2)(\RR^2)} = o_{ \| \omega_0 - \wl \|_{(L^1_{x_2} \cap L^2)(\RR^2)} \to 0}(1).  \label{orbital stability: main result AC}
\ee
Our stability estimate \eqref{orbital stability: main result} refines the $o(1)$ to be \textit{the linear dependence} on the size of initial perturbation. We stress this linear dependence is sharp in the sense that one cannot expect $o\left(\bmrmd[\omega_0 - \wl]\right)$ to be the right hand side of \eqref{orbital stability: main result} with whatever $\bmrmd$. This sharpness indicates the necessity of modulating the amplitude $\a(t)$ (in view of its $\bmrmd^\frac 12$ bound \eqref{smallness: parameters alpha}) and our special choice of $\bmrmd$ \eqref{eqdefdist}. 
Secondly, we can determine the modulation parameters with $C^1$-smoothness in time, and uniformly bound the travelling speed as \eqref{smallness: parameters beta}, which also seems to be optimal for orbital stability control. 

We expect this quantitative information to assist further dynamical analysis near the Lamb-Chaplygin dipole, for example, in the construction of small-scale created solutions as in \cite{zbMATH07488949,jeong2025superlinear}. 

\vspace{0.2cm}

\mbox{}

\noindent \textit{2. Variational approach vs. Quantitative Lyapunov method.}

\vspace{0.2cm}

In \cite{abe2022stability,abe2025stability}, the orbital stability was proven via the variational approach. The authors identified Lamb-Chaplygin dipole as the unique (up to $x_1$-translation) global optimizer of a constrained variational problem, and verified the compactness of minimizing sequence to derive the qualitative bound. This robust strategy also finds application in other orbital stability problems for Euler equation, see \cite{zbMATH07782026,zbMATH07599255,zbMATH07941365,Wang_ConcentratedVortexPair24,Wang_StabilityOnDisk25}. Nevertheless, this method seems to be hard to provide a quantitative bound, and the center $\beta(t)$ is determined by minimization \eqref{orbital stability: main result AC} which can be noncontinuous and hard to evaluate.

In this work, we exploit the quantitative Lyapunov method from \cite{zbMATH03955503} where Weinstein proved orbital stability of ground states for several nonlinear dispersive equations. The main idea is to design a Lyapunov functional controlling the perturbation quantitatively. In particular, proving its coercivity is usually the key, often requiring spectral analysis of specific self-adjoint operators. 
For 2D incompressible Euler model, this strategy was also employed in \cite{liao2023stability} to study the orbital stability of Kelvin-Stuart cat's eye flow, which is a steady state without any known variational characterization. 

\vspace{0.2cm}

\mbox{}

\noindent \textit{3. New ingredients in the proof.}

\vspace{0.1cm}

\noindent (1) Nonvanishing but coercive first-order variation of Lagrangian functional.

\vspace{0.1cm}

The natural candidate of Lyapunov functional is $\calF[\omega] - \calF[\wl] =: \calQ[\omega - \wl]$ with the \textit{Lagrangian functional} $\calF[\omega]$ \eqref{functional: def} of which $\wl$ is a critical point. However, due to the compact support of $\wl$, the first-order variation $\frac{\delta\calF}{\delta \omega}\big|_{\omega=\wl}$ only vanishes inside $\supp \wl = B$, so that $\calQ$ contains both linear and quadratic parts. 

Luckily, $\frac{\delta\calF}{\delta \omega}\big|_{\omega=\wl}$ has a favorable sign \eqref{first variation}, which implies linear coercivity of $\calQ[h]$ in the exterior region assuming the non-negativity of $h$ on the upper half-plane. The coercivity of the interior quadratic form $ \calQ[h] = (\tilde \calS h, h)$ \eqref{linear op: cutoff version} can be derived via spectral analysis. Combining them with a careful spatially decoupled analysis, we can obtain the coercivity of $\calQ$ (see Proposition \ref{prop: coercivity}). We note that this determines the choice of distance functional $\bmrmd$. 

Besides, motivated by Weinstein \cite{zbMATH03955503}, we show that the coercivity retains after replacing the unstable eigenfunction direction by $\mathbf{1}_B x_2$ (Proposition \ref{prop: calS coercivity}), thanks to the algebraic relations $\tilde{\calS} \wl=-\frac12\mathbf{1}_B x_2$ and $(\mathbf{1}_B x_2,\wl)_{L^2(\RR^2_+)}>0$.

\vspace{0.2cm}

\noindent (2) Quadratic impulse correction in Lyapunov functional.

\vspace{0.1cm}

The functional $\calQ$ \eqref{def: Q} above is not yet a Lyapunov functional after injecting the modulation analysis, due to the quadratic terms related to the amplitude modulation parameter $\alpha(t)$ \eqref{eqdeficiency of Q}. We observe that the \textit{square of the impulse difference} produces an exact cancellation of the related terms, so that the Lyapunov functional $\calA(t)$ \eqref{eqdefcalAt} is coercive.

\vspace{0.2cm}

\noindent (3) Modulation analysis with low-regularity orthogonal directions.

\vspace{0.1cm}

The orthogonal directions $\mathbf{1}_B x_2, \partial_{x_1}\wl$ for coercivity of $\calQ$ (Proposition \ref{prop: calS coercivity}) do not belong to $H^1(\RR^2)$. Such low regularity is insufficient to introduce modulation parameters by the standard implicit function theorem and to control the evolution. To address the difficulty, following the idea of \cite[Lemma 2.11]{zbMATH08065795}, we impose modified orthogonality conditions by replacing them with smooth approximations  $W_1,W_2 \in C^\infty_c(\RR^2) \subset H^\infty(\RR^2)$. As a small $L^2$-perturbation, these new orthogonal directions still guarantee the coercivity and the non-degeneracy in the modulation setup. 

We mention other remedies for such low-regularity issue in modulational analysis. In Weinstein's work \cite{zbMATH03955503}, modulation is introduced through minimization, which works for rough orthogonal directions but cannot ensure uniqueness and $C^1$-continuity of the parameters. In \cite{jeong2025superlinear}, Jeong, Yao, and the third author introduced a $C^1$ approximate center by choosing a regular orthogonal direction based on filamentation of the perturbed Lamb-Chaplygin dipole.
This also implies a uniform control of the center moving velocity $\beta'$ \cite[(3.13)]{jeong2025superlinear}, while our analysis seems to imply a better bound \eqref{smallness: parameters beta}.

\vspace{0.2cm}

\mbox{}

\noindent \textit{4. Infinite-mass, odd-symmetry and non-negativity conditions.}

\vspace{0.2cm}

As in \cite{abe2025stability}, we do not require $L^1(\RR^2)$-integrability of initial data, thanks to the fact that the energy interpolation inequality \eqref{energy ineq} does not involve $L^1(\RR^2)$, and that the modulation analysis is fully in $L^2(\RR^2)$. 

In contrast, the non-negativity in the upper half-plane is used in an essential way to ensure the exterior linear coercivity of $\calQ$ \eqref{first variation}, which is a cornerstone of the coercivity of Lyapunov functional. 

Furthermore, if we drop the odd-symmetry condition, not only do we lose the energy interpolation inequality \eqref{energy ineq} and cannot propagate non-negativity, but also the dynamics could be very different by admitting rotation.

\subsection{Structure of the paper.}

\mbox{}

\vspace{0.2cm}

\noindent \textit{1. Sketch of the proof of Theorem \ref{main thm: linear}.}

\vspace{0.1cm}

Section \ref{sec 2} is devoted to the proof of Theorem \ref{main thm: linear} by the index theory for linear Hamiltonian form, and we study the spectral property of $\calL$ in $L^2(\RR^2)$ sense from Section \ref{sec 21} to Section \ref{sec 24}, then turn to the property of $\calL_\a$ in $X_\a$ in Section \ref{sec 25}. Precisely, in Section \ref{sec 21}, we use the algebraic relation \eqref{lamb dipole: algebraic relation} to rewrite \eqref{linearized op: def} in a more structured form \eqref{linearized operator: almost Hamiltonian form}. In Section \ref{sec 22}, we prove that every unstable eigenfunction of $\calL$ is compactly supported in the unit disk $B$ (Corollary \ref{lemma: reduce eigenproblem to B}) and thus reduces the problem to the operator $\tilde{\calL}:=\calL\circ \mathbf{1}_B := J \tilde L$ enjoying the Hamiltonian structure. After reviewing the index theory in Section \ref{sec 23}, we turn in Section \ref{sec 24} to the operator $\tilde \calL$, where we prove that $n^-(\tilde L)=3$ by an explicit analysis of $\tilde L= L \circ \mathbf{1}_B$ (Lemma \ref{lemma: n-(L)}), and show that $k_{0}^{\le 0}(\tilde{\calL})\ge 3$ using the symmetries of the 2D Euler equation (Lemma \ref{lemma: non-positive directions of the bilinear form of L}), so Theorem \ref{main thm: linear} (i) simply follows from these facts together with the index theory. Finally, in Section \ref{sec 25},  we use the semigroup theory generated by incompressible flow (Lemma \ref{lemma: spectrum of the transport op on weighted L2 is on the imaginary axis}), compactness theorem (Lemma \ref{Lemma: spectrum of compactly-perturbed operator}) and the mode stability of $\calL$ in $L^2(\RR^2)$ (Theorem \eqref{main thm: linear} (i)) to conclude the spectral stability of $\calL_\a$ in $X_\a$, i.e. Theorem \ref{main thm: linear} (ii).

\mbox{}

\vspace{0.1cm}

\noindent \textit{2. Sketch of the proof of Theorem \ref{thm: orbital stability sec3}.}

\vspace{0.1cm}

Section \ref{sec3} is devoted to the proof of Theorem \ref{thm: orbital stability sec3} by means of the quantitative Lyapunov method. The essential part of the proof is carried out in Section \ref{sec 33}, where we verify the coercivity of $\calQ$, defined as the sum of the first and second variations of the Lagrangian functional $\calF$ in \eqref{functional: def}. Exploiting the nonnegativity of the first-order variation together with the exterior error of the Lamb--Chaplygin dipole introduced in \eqref{error: def}, we first reduce the problem to the spectral study of $\tilde \calS$ in \eqref{linear op: cutoff version} on $L^2(\RR^2)$. The coercivity of $\tilde \calS$ is then obtained through a suitable choice of orthogonality conditions and the method of Lagrange multipliers, which has been discussed in Section \ref{sec 32}. In Section \ref{sec 34}, we construct the Lyapunov functional $\calA(t)$ \eqref{eqdefcalAt}, a combination of the Lagrangian difference $\calF[\omega]-\calF[\wl]$ and quadratic impulse correction $\left(I[\omega]-I[\wl]\right)^2$, and then complete the proof of Theorem \ref{thm: orbital stability sec3}  using the coercivity of $\calQ$ together with the modulation argument. 


\subsection{Notations}
For any $\Omega \subset \RR^2$ and $w \ge 0$ on $\Omega$, we denote by $L_w^2(\Omega)$ the space of all functions $f$ with $\| f\|_{L_w^2(\Omega)}^2 := \int_{\Omega} |f(\mathbf x)|^2 w(\mathbf x) d \mathbf x < \infty$, and by $L_{x_2}^1(\Omega)$ the space of all functions $ f$ with $\| f\|_{L_{x_2}^1(\Omega)} := \int_{\Omega} |f(\mathbf x)| |x_2| d \mathbf x < \infty$. Additionally, we denote $B_r:= B(0,r)$, and $B_r^c := \RR^2 \setminus B_r$. Moreover, if $A: \calX \to \calY$ is a linear operator, then we denote $\rho(A)$ as the resolvent set of $A$, $\sigma(A)$ as the spectrum of $A$, $\sigma_{\text{c}}(A)$ as the continuous spectrum of $A$, $\sigma_{\text{r}}(A)$ as the residual spectrum of $A$, $\sigma_{\text{p}}(A)$ as the point spectrum of $A$,  $\sigma_{\text{ess}}(A)$ as the essential spectrum of $A$, and $\sigma_{\text{disc}} (A)$ as the discrete spectrum of $A$. We denote constants generically by $C>0$, the values of which may vary from line to line.


\mbox{}

\begin{acknowledgement}
    The work of Z.L. is part of the ERC starting grant project FloWAS that has received funding from the European Research Council (ERC) under the Horizon Europe research and innovation program (Grant agreement No. 101117820). P.S. is supported by NSF Grants DMS-2205590 and DMS-2512878, the Choi Family Gift fund and Dr. Mike Yan Gift fund. T.Z. is supported by MOE Tier 1 Grant A-8004147-00-00.
 The authors gratefully thank In-Jee Jeong and Yao Yao for suggesting this problem and valuable discussions during the course of this research. Special thanks are also extended to  Daniel Boutros, Xiaoyutao Luo and Guolin Qin for their helpful discussions and generous encouragement.
\end{acknowledgement}
\section{On the spectral stability of the Lamb-Chaplygin dipole}\label{sec 2}
This section is dedicated to the proof of Theorem \ref{main thm: linear}. In Sections \ref{sec 21} - \ref{sec 24}, we establish the mode stability of $\calL$ (i.e., no eigenvalues with strictly negative real parts) with respect to the space $L^2(\RR^2)$. In Section \ref{sec 25}, we establish the spectral stability of $\calL$ in the weighted $L^2$-spaces $X_\a$. 
\subsection{"Nearly" Hamiltonian form of linearized operator \eqref{linearized op: def}.}
\label{sec 21}
As for the linearized operator $\calL$ defined \eqref{linearized op: def}, we are going to make full use of the structure of Lamb-Chaplygin dipole to rewrite it in a "nearly" Hamiltonian form. Precisely, recalling \eqref{lamb dipole: algebraic relation} and \eqref{moving frame: stream function}, together with the fact that $\nabla^\perp f\cdot\nabla g = -\nabla^\perp g\cdot\nabla f$, we obtain that 
\begin{align}
    \calL h 
    & = - \mathbf{e}_1 \cdot \na h + \na h \cdot \na^\perp \Delta^{-1} \wl + \na \wl \cdot \na ^\perp \Delta^{-1} h \notag\\
    & = \na h \cdot \na^\perp ( x_2 + \Delta^{-1} \wl) + \na \wl \cdot \na ^\perp \Delta^{-1} h \notag\\
    & = \na h \cdot \na^\perp \psilm  - c_L^2 \na \left(\psilm \mathbf{1}_B \right) \cdot \na^\perp \Delta^{-1} h \notag  \\
    & = \na \psilm \cdot \left( - \na^\perp h  - c_L^2 \mathbf{1}_B \na^\perp \Delta^{-1} h \right)  - c_L^2 \psilm \na \mathbf{1}_B \cdot \na ^\perp \Delta^{-1} h \notag \\
    & = \na \psilm \cdot \na^\perp \left( -h - c_L^2 \mathbf{1}_B \Delta^{-1} h  \right)
    + c_L^2 \left( \na \psilm \cdot \na^\perp \mathbf{1}_B \right) \Delta^{-1} h 
    \label{halmitonian 1}
    \\
    & = - \na^\perp \psilm \cdot \na \left( h + c_L^2 \mathbf{1}_B \Delta^{-1} h  \right),
    \label{halmitonian 2}
\end{align}
where \eqref{halmitonian 1} and \eqref{halmitonian 2} follow from the fact that $\psilm \Big|_{\partial B} \equiv 0$ and $\na^\perp \psilm \perp \mathbf{n}$ on $\partial B$ ($\mathbf n$ is the normal vector on $\partial B$). Hence it means that $\calL$ \eqref{linearized op: def} in fact takes a "nearly" Hamiltonian form in the sense that
\begin{align}\label{linearized operator: almost Hamiltonian form}
\begin{split}
    &\L h = \nabla^\perp\psilm\cdot\nabla \left(h - c^2_L\one_B \left( -\Delta \right)^{-1} \right) = JLu,\\
    \text{ with } \quad  &J :=\nabla^\perp\psilm\cdot\nabla,\quad L:= I - c^2_L\one_B \left(-\Delta \right)^{-1} .
\end{split}
\end{align}

\begin{remark}
    Since the velocity field $\nabla^\perp\psilm\in W^{1,\infty}(\RR^2)$, in $L^2(\RR^2)$ sense, it is standard that $J$ is skew-adjoint (i.e., $J^* = -J$) as the realization of its expression equipped with its maximal domain
    \[
       D(J):= \{f\in L^2(\RR^2): \nabla^\perp\psilm\cdot\nabla f\in L^2(\RR^2)\text{ in the distributional sense}\}.
    \]
    However, the operator $L$ is not self-adjoint (i.e., $L^* = L$), even not symmetric in $L^2(\RR^2)$, which prevents us from using the index theory established in \cite{LinZeng22} to study the related spectrum, and this is the reason why we call it of "nearly" Hamiltonian form.
\end{remark}

\subsection{Reduction to Hamiltonian form}
\label{sec 22}
Our first goal is to study the unstable eigenmodes of the linearized operator $\calL$ with form \eqref{linearized operator: almost Hamiltonian form} in $L^2(\RR^2)$ and prove Theorem \ref{main thm: linear} (i), which will be coverd from Section \ref{sec 22} to Section \ref{sec 24}. To handle this problem, our first target is to reduce the operator \eqref{linearized operator: almost Hamiltonian form} into a Hamiltonian one, a step that relies crucially on the structure of the velocity field. More precisely, we establish the following property of $J$ stated in a general setting.

\begin{lemma}[Localization of unstable eigenmodes]
\label{a general eigenproblem lemma}
    Let $\Omega = B^c$, and let the incompressible vector field $\mathbf v\in C^\infty(\bar\Omega;\RR^2)$ satisfy $\nabla\cdot \mathbf v = 0$ in $\Omega$, $
\mathbf v\cdot e_r=0$ on $\pa\Omega$, and the linear growth control
    \begin{equation}\label{prop: general eigen_far field growth control of v}
        \sup_{R>2}\frac{1}{R}\|\mathbf v\cdot \mathbf e_r\|_{L^\infty(R<|x|<2R)}<+\infty. 
    \end{equation}
    If $u\in L^2(\Omega)$ solves
    \[
      \mathbf  v\cdot \nabla u = \lambda u,\quad \text{in }\calD'(\Omega).
    \]
    Then,
    \[ 
        \mathrm{Re}(\lambda)\|u\|^2_{L^2(\Omega)} = 0.
    \]
    In particular, if $\mathrm{Re}(\lambda)\neq 0$, then $u=0$ almost everywhere in $\Omega$.
\end{lemma}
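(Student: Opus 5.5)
We plan a weighted energy identity obtained by testing the equation against $\bar u$ with a radial cutoff, followed by removal of the cutoff. Formally, $\mathbf v\cdot\nabla |u|^2 = 2\,\mathrm{Re}(\bar u\, \mathbf v\cdot\nabla u) = 2\mathrm{Re}(\lambda)|u|^2$. Integrating over $\Omega$ and using $\nabla\cdot\mathbf v=0$ together with $\mathbf v\cdot \mathbf e_r=0$ on $\pa\Omega$, the left side is a pure divergence $\nabla\cdot(\mathbf v|u|^2)$ with vanishing boundary flux on $\pa B$. It would therefore integrate to zero if there were no contribution from infinity. The proof must make this rigorous for $u$ that is only in $L^2$, and must control the far field.

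\textbf{Step 1 (regularity / renormalization).} Since $u$ is only $L^2$, we cannot multiply directly by $\bar u$. We mollify: set $u_\e = \rho_\e * u$ (on compact subsets of $\Omega$ away from $\pa B$, or after a local flattening near the boundary). Then $\mathbf v\cdot\nabla u_\e = \lambda u_\e + r_\e$, with commutator $r_\e = [\mathbf v\cdot\nabla,\rho_\e*]u \to 0$ in $L^2_{loc}$. This is the DiPerna--Lions commutator lemma, valid because $\mathbf v\in W^{1,\infty}_{loc}(\bar\Omega)$. It yields the renormalized identity
\[
\mathbf v\cdot\nabla |u|^2 = 2\mathrm{Re}(\lambda)|u|^2 \quad \text{in } \calD'(\Omega),
\]
i.e.\ $\nabla\cdot(\mathbf v |u|^2)=2\mathrm{Re}(\lambda)|u|^2$ in distributions, with $|u|^2\in L^1(\Omega)$.

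\textbf{Step 2 (boundary).} We need to test this identity against functions that do not vanish on $\pa B$. Since $\mathbf v$ is tangent to $\pa B$ and smooth up to the boundary, the flow of $\mathbf v$ preserves $\pa B$. An alternative is a boundary-layer cutoff $\chi_\delta(|x|)$, equal to $0$ for $|x|<1+\delta$ and to $1$ for $|x|>1+2\delta$. The error term is $\int |u|^2\, \mathbf v\cdot\nabla\chi_\delta$. Because $|\mathbf v\cdot \mathbf e_r|\le C(|x|-1)$ near $\pa B$ (by smoothness and tangency), we have $|\mathbf v\cdot\nabla\chi_\delta|\le C$ on the shell $\{1+\delta<|x|<1+2\delta\}$, whose measure tends to $0$. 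Hence this error is bounded by $C\int_{1+\delta<|x|<1+2\delta}|u|^2\to 0$ by absolute continuity of the integral.

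\textbf{Step 3 (far field).} We take a cutoff $\phi_R(x)=\phi(|x|/R)$, equal to $1$ for $|x|<R$ and vanishing for $|x|>2R$. Testing gives
\[
2\mathrm{Re}(\lambda)\int |u|^2\chi_\delta\phi_R = -\int |u|^2\, \mathbf v\cdot\nabla(\chi_\delta\phi_R).
\]
The far-field term is bounded by $\frac{C}{R}\|\mathbf v\cdot\mathbf e_r\|_{L^\infty(R<|x|<2R)}\int_{R<|x|<2R}|u|^2$. By \eqref{prop: general eigen_far field growth control of v} this is at most $C\int_{R<|x|<2R}|u|^2$, which tends to $0$ as $R\to\infty$ because $u\in L^2$. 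Note that only the radial component of $\mathbf v$ enters, which is exactly why the hypothesis concerns $\mathbf v\cdot\mathbf e_r$. Letting $\delta\to0$ and then $R\to\infty$ gives $\mathrm{Re}(\lambda)\|u\|_{L^2(\Omega)}^2=0$.

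The main obstacle is Step 1 near $\pa\Omega$, where mollification would cross the boundary. We plan to avoid this by performing the renormalization only in the interior: the test functions $\chi_\delta\phi_R$ are compactly supported in $\Omega$, so the renormalized identity is needed only in $\calD'(\Omega)$, where the standard local commutator estimate suffices. The boundary is then handled entirely by the tangency estimate of Step 2.
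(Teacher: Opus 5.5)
Your proposal is correct and takes essentially the same route as the paper. You use the renormalized identity $\mathbf v\cdot\nabla|u|^2 = 2\mathrm{Re}(\lambda)|u|^2$ in $\calD'(\Omega)$ (which you justify via the DiPerna--Lions commutator lemma, where the paper simply calls it standard), test it against a product of a boundary-layer radial cutoff and a far-field radial cutoff, control the boundary error by tangency ($|\mathbf v\cdot\mathbf e_r|\le C\delta$ on the shell against a cutoff derivative of size $C\delta^{-1}$) and the far-field error by the linear growth bound, and then let $\delta\to0$ and $R\to\infty$, exactly as the paper does.
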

\begin{proof}
Since $\mathbf v$ is smooth and $u\in L^2(\Omega)$ solves     
    \[
      \mathbf  v\cdot \nabla u = \lambda u,\quad \text{in }\calD'(\Omega),
    \]
the standard renormalization property for transport equations gives
\[
    \mathbf v\cdot\nabla|u|^2 = 2\mathrm{Re}(\lambda)|u|^2, \quad \text{in }\calD'(\Omega).
\]
Equivalently, for any $\eta\in C^\infty_0(\Omega)$, it holds that
\[
    2\mathrm{Re}(\lambda)\int_{\Omega}|u|^2\eta \;d\mathbf x= -\int_\Omega (\mathbf v\cdot\nabla\eta )|u|^2\;d\mathbf x.
\]
Now we choose two smooth radial cutoff functions $a_\delta(r)$ and $b_R(r)$ taking values in $[0,1]$, such that
\[
    a_\delta(r)=0\;\text{ for } r<1+\delta,\quad   a_\delta(r)=1\;\text{ for } r>1+2\delta,\quad |a_\delta'(r)|\leq\frac{C}{\delta},
\]
and
\[
    b_R(r)=1\;\text{ for } r<R,\quad   b_R(r)=0\;\text{ for } r>2R,\quad |b_R'(r)|\leq\frac{C}{R}.
\]
Then, we define the test function 
\[
    \eta_{\delta,R}(\mathbf x):= a_\delta(|\mathbf x|)b_R(|\mathbf x|)\in C^\infty_0(\Omega).  
\]
Applying the identity above with $\eta = \eta_{\delta,R}$, we obtain
\begin{equation}\label{prop: general eigen_test equation}
    2\mathrm{Re}(\lambda)\int_\Omega |u|^2\eta_{\delta,R}\;dx = -\int_\Omega (\mathbf v\cdot\nabla\eta_{\delta,R} )|u|^2\;d\mathbf x.
\end{equation}
Since $\eta_{\delta,R}$ is radial, we compute
\[
   \mathbf v\cdot\nabla\eta_{\delta,R} = (\mathbf v\cdot \mathbf e_r)(a'_\delta b_R+a_\delta b'_R),
\]
which is supported on the annuli $A_\delta:=\{\mathbf x:1\leq |\mathbf x|\leq 1+\delta\}$ and $A_R:=\{\mathbf x: R\leq|\mathbf x|\leq 2R\}$. On $A_\delta$, since $\mathbf v\cdot \mathbf e_r\big|_{r=1}=0$, by smoothness of $\mathbf v$ we have
\[
    |\mathbf v\cdot \mathbf e_r|\leq C\delta.
\]
Then, by the estimate $|a'_\delta|\leq C\delta^{-1}$, we have
\[
    \da{\int_{A_\delta}(\mathbf v\cdot\nabla\eta_{\delta,R})|u|^2\;dx} = \da{\int_{A_\delta}(\mathbf v\cdot \mathbf e_r)(a'_\delta b_R)|u|^2\;dx}\leq C\|u\|^2_{L^2(A_\delta)}\to0,\quad \delta\to 0^+.
\]
Similarly, on $A_R$, due to the control \eqref{prop: general eigen_far field growth control of v} and $|b'_R|\leq CR^{-1}$, we have
\[
\da{\int_{A_R}(\mathbf v\cdot\nabla\eta_{\delta,R})|u|^2\;dx} = \da{\int_{A_\delta}(\mathbf v\cdot \mathbf e_r)(a_\delta b'_R)|u|^2\;dx}\leq \frac{1}{R}\|u\|^2_{L^2(A_R)}\to0,\; R\to +\infty.
\]
Finally, by taking $\delta\to 0^+$ and $R\to +\infty$ in \eqref{prop: general eigen_test equation}, we obtain
\[
   2\mathrm{Re}(\lambda)\int_\Omega|u|^2\;dx = 0.
\]
Consequently, we have concluded the proof.
\end{proof}

Based on Lemma \ref{a general eigenproblem lemma}, as a crucial corollary, we can verify that all of the $L^2(\RR^2)$ unstable eigenmodes of $\calL$ should be compactly supported in the unit disk $B$:

\begin{corollary}[Reduction to the unit disk]
\label{lemma: reduce eigenproblem to B}
Let $\varphi\in L^2(\RR^2)$ be an eigenfunction of $\calL$ with respect to the eigenvalue $\lambda$ with $\mathrm{Re}\,(\lambda)\neq 0$. Then, it holds that $\varphi\mid_{B^c} =0$ almost everywhere, i.e., $\mathrm{supp}(\varphi)\subset B$. 
\end{corollary}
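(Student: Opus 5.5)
The plan is to restrict the eigenvalue equation $\calL\varphi=\lambda\varphi$ to the exterior region $\Omega=B^c$ and show that it collapses to a pure transport equation there, so that Lemma \ref{a general eigenproblem lemma} applies directly.

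By the nearly Hamiltonian form \eqref{linearized operator: almost Hamiltonian form}, we have
\[
\calL\varphi=\nabla^\perp\psilm\cdot\nabla\bigl(\varphi - c_L^2\one_B(-\Delta)^{-1}\varphi\bigr).
\]
On the open set $B^c$ (the exterior of the closed disk $\bar B$), the term $\one_B(-\Delta)^{-1}\varphi$ vanishes identically. Hence, testing against $\eta\in C_c^\infty(B^c)$, the distribution $\nabla^\perp\psilm\cdot\nabla(\one_B(-\Delta)^{-1}\varphi)$ contributes nothing. One way to see this is through the derivation \eqref{halmitonian 1}--\eqref{halmitonian 2}: there the boundary term $(\nabla\psilm\cdot\nabla^\perp\one_B)\Delta^{-1}h$ is a measure on $\partial B$, and it vanishes anyway because $\nabla^\perp\psilm$ is tangent to $\partial B$. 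In any case, its restriction to the open set $B^c$ is zero. Since $\nabla\cdot\nabla^\perp\psilm=0$, the identity $\nabla^\perp\psilm\cdot\nabla\varphi=\nabla\cdot(\varphi\nabla^\perp\psilm)$ holds in distributions for $\varphi\in L^2$. We therefore obtain
\[
\mathbf v\cdot\nabla\varphi=\lambda\varphi \quad\text{in } \calD'(B^c),\qquad \mathbf v:=\nabla^\perp\psilm .
\]
Here it suffices to use the original expression \eqref{linearized op: def}: on $B^c$ we have $\nabla\wl=0$ and $-\mathbf e_1\cdot\nabla+\nabla^\perp\Delta^{-1}\wl\cdot\nabla=\nabla^\perp\psilm\cdot\nabla$. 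This is the cleanest route and avoids any discussion of boundary measures.

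Next we check the hypotheses of Lemma \ref{a general eigenproblem lemma} for $\mathbf v=\nabla^\perp\psilm$ on $\bar\Omega$, using the explicit exterior formula \eqref{Lamb dipole: stream function}, namely $\psilm=(r-r^{-1})\sin\theta$ for $r\ge1$. This expression is smooth (real-analytic) up to $r=1$ from the outside, so $\mathbf v\in C^\infty(\bar\Omega)$. The field $\mathbf v$ is divergence-free. Its radial component is
\[
\mathbf v\cdot\mathbf e_r=-\tfrac1r\partial_\theta\psilm=-(1-r^{-2})\cos\theta,
\]
up to a sign convention. This vanishes at $r=1$ and is bounded, so it certainly satisfies the growth condition \eqref{prop: general eigen_far field growth control of v}. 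The lemma then gives $\mathrm{Re}(\lambda)\|\varphi\|_{L^2(B^c)}^2=0$, and since $\mathrm{Re}\,\lambda\ne0$ we conclude $\varphi=0$ a.e. on $B^c$.

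The only delicate point is justifying that the eigenvalue equation, which holds on all of $\RR^2$, restricts to the pure transport equation on the open exterior. This requires knowing that $\nabla^\perp\Delta^{-1}\varphi$ is locally integrable, so that the product $\nabla\wl\cdot\nabla^\perp\Delta^{-1}\varphi$ makes sense as a distribution. It is also worth noting that $\partial B$ has measure zero, so the conclusion on $B^c$ versus $\bar B^c$ is the same a.e.; we expect this step to cause no real difficulty.
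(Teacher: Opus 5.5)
Your proposal is correct and follows the paper's proof: you restrict the eigenvalue equation to the exterior, where it reduces to the pure transport equation $\nabla^\perp\psilm\cdot\nabla\varphi=\lambda\varphi$. You then apply Lemma \ref{a general eigenproblem lemma} with $\mathbf v=\nabla^\perp\psilm$, using $\mathbf e_r\cdot\nabla^\perp\psilm=-(1-r^{-2})\cos\theta$, exactly as the paper does. Your derivation of the exterior equation from \eqref{linearized op: def} on the open set $\{r>1\}$, where $\nabla\wl=0$, is a slightly cleaner way to justify the paper's identity $J\varphi_{out}=\lambda\varphi_{out}$, since the lemma only needs the equation in $\calD'(B^c)$.
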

\begin{proof}
Assume that $\calL \varphi = \lambda \varphi$ with $\varphi \in L^2(\RR^2)$ with $\mathrm{Re}\,(\lambda)\neq 0$. If we decompose $\varphi$ into $\varphi = \varphi_{in} + \varphi_{out}$ with $\varphi_{in} = \varphi \mathbf{1}_B$ and $\varphi_{out} = \varphi \mathbf{1}_{B^c}$, then $J \varphi_{out} = \lambda \varphi_{out}$ with $\varphi_{out} \in L^2(\RR^2)$ and $\text{supp } (\varphi_{out}) \subset B^c$. Note that
\begin{equation}
    \mathbf e_r\cdot\nabla^\perp\psilm = -\dr{1-\frac{1}{r^2}}\cos(\theta),\quad r\geq 1.
\end{equation}
Hence we can apply Lemma \ref{a general eigenproblem lemma} with $u:= \varphi|_{B^c}$ and $\mathbf v:=\nabla^\perp\psilm$, together with the assumption that $\mathrm{Re}\,(\lambda)\neq 0$, to conclude that $\varphi_{out} =0$, and thus we have finished the proof.
\end{proof}

 \begin{remark}[Simplify the operator \eqref{linearized operator: almost Hamiltonian form} into the Hamiltonian form]
 \label{rmk: simplify to Hamiltonian form} As a result of Corollary \ref{lemma: reduce eigenproblem to B}, to study the unstable eigenvalue of $\calL$ on $L^2(\RR^2)$, it is equivalent to the study of the operator
 \be
 \tilde \calL := \calL \circ \mathbf{1}_B = J \tilde L, \; \text{ with } \; \tilde L = L \circ \mathbf{1}_B = \mathbf{1}_B - c_L^2 \mathbf{1}_B \left( -\Delta \right)^{-1} \mathbf{1}_B.
 \label{tilde calL}
 \ee
Note that $\tilde L : L^2(\RR^2) \to L^2(\RR^2)$ is a bounded self-adjoint operator, $\tilde \calL$ is of Hamiltonian form, and then we can then turn to the index theory established in \cite{LinZeng22} to study the spectral property of $\tilde \calL$.
\end{remark}

\vspace{0.1cm}

\subsection{Review of the index theory \cite{LinZeng22}}
\label{sec 23}
Before we continue the study of the spectrum of $\tilde\calL$, let us recall the index theory for the linear Hamiltonian system 
\begin{equation}\label{Linear Hamiltonian system}
\pa_t u = JL u,\quad u\in X,
\end{equation}
established by Lin and Zeng \cite{LinZeng22}. The theory relates the number of unstable modes of $JL$ to that of $L$, which are usually easier to study. Before we introduce their main result, some preliminaries are in order. In the system \eqref{Linear Hamiltonian system}, suppose that $X$ is a real Hilbert space with inner product $\scl{\cdot}{\cdot}$. Assume the following:
\begin{enumerate}[start=1,label={(\bfseries H\arabic*)}]
    \item\label{H1} $J: D(J) \subset X\to X$ is skew-adjoint, in the sense that $J^* = -J$.\\
    \item\label{H2} $L: X\to X$ is bounded and self-adjoint (i.e., $L^* = L$) such that $\scl{L\cdot}{\cdot}$ defines a bounded symmetric bilinear form on X. Moreover, there
    exists a decomposition of X into the direct sum of three closed subspaces
    \[
        X = X_-\oplus\ker(L)\oplus X_+,\quad n^-(L):=\dim(X_-)<+\infty,
    \]
    such that $\scl{Lu}{u}<0$ for all $u\in X_-\backslash\{0\}$ and $\scl{L u}{u}\geq \delta\|u\|^2$ for any $u\in X_+$ for some universal $\delta>0$.\\
    \item\label{H3} It holds that
    \[
        \{v\in X: \scl{v}{u}=0,\;\forall\;u\in X_-\oplus X_+\}\subset D(J).
    \]
\end{enumerate}

\mbox{}

We remark that the index $n^-(L)$ is well-defined (independent of the decomposition), which equals the negative dimensions of quadratic form $\scl{L\cdot}{\cdot}$. This standard result is an infinite-dimensional analogue of Sylvester's law of inertia.
For any eigenvalue $\lambda$ of $JL$, define the generalized eigenspace as
\begin{equation}
    E_\lambda:= \{u\in X: (JL - \lambda)^ku=0,\;\text{for some integer }k\geq 1\}.
\end{equation}
Then we let $k_r$ be the sum of algebraic multiplicities of positive eigenvalues of $JL$ and
$k_c$ be the sum of algebraic multiplicities of eigenvalues of $JL$ in the first quadrant, i.e.,
\[
    k_r:=\sum_{\lambda>0}\dim(E_\lambda),\quad k_c:=\sum_{\mathrm{Re}(\lambda)>0,\;\mathrm{Im}(\lambda)>0}\dim(E_\lambda).
\]
Given any subspace $S\subset X$, define $n^-(L|_S)$ and $n^{\leq 0}(L|_S)$ to be the the maximal negative and non-positive dimensions of $\scl{L\cdot}{\cdot}$ restricted to $S$, respectively. By the assumption on $L$, we have $n^-(L|_S)\leq n^{\leq 0}(L|_S)\leq n^-(L)<+\infty$ for any $S$ satisfying $S\cap \ker(L) = \{0\}$.
Then, for any purely imaginary eigenvalue $i\mu$ ($\mu>0$) of $JL$, define
\begin{equation}
    k_i^{\leq 0}:=\sum_{\mu>0}k^{\leq 0}(i\mu),\quad \text{with} \quad k^{\leq 0}(i\mu):=n^{\leq 0}(L|_{E_{i\mu}}).
\end{equation}
Finally, note that $\ker(L)\subset E_0$. Let $\tilde E_0$ be any subspace such that $E_0 = \ker(L)\oplus \tilde E_0$. We define
\[
   k_0^{\leq 0}:= n^{\leq 0}(L|_{\tilde E_0}).
\]
We remark that the definition of $k_0^{\leq 0}$ is independent of the choice of $\tilde E_0$. Now we are ready to introduce the index theorem:
\begin{theorem}[Index theory {\cite[Theorem 2.3]{LinZeng22}}]
\label{thm: index thm}
    Under the assumptions \ref{H1}, \ref{H2}, and \ref{H3}, the indices satisfy
    \begin{equation}\label{index theorem identity}
        k_r+2k_c+2k_i^{\leq0}+ k_0^{\leq0}=n^{-}(L).
    \end{equation}
\end{theorem}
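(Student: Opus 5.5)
Since this is exactly \cite[Theorem 2.3]{LinZeng22}, in the paper we would simply invoke it. If we had to reprove it, the plan is to view $\scl{L\cdot}{\cdot}$ as an indefinite inner product with finitely many negative directions, and to count those directions using invariant subspaces of $JL$.

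\textbf{Step 1: Pontryagin structure.} First quotient out $\ker(L)$, or more precisely work on $X/\ker(L)$ completed with respect to $\scl{L\cdot}{\cdot}$. By \ref{H2} this gives a Pontryagin space $\Pi_\kappa$ with $\kappa=n^-(L)$. The key algebraic identity is that $JL$ is skew-adjoint for this form:
\[
\scl{L(JLu)}{v}=-\scl{Lu}{JLv}.
\]
This follows from \ref{H1}. Assumption \ref{H3} is what makes $JL$ densely defined and closed on the quotient, so that it descends to a well-defined operator there.

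\textbf{Step 2: $L$-orthogonality of generalized eigenspaces.} From this skew-adjointness, an induction on the Jordan length shows that $\scl{Lu}{v}=0$ for $u\in E_\lambda$, $v\in E_\mu$ whenever $\lambda+\bar\mu\neq 0$. We then read off the contribution of each part of the spectrum.
\begin{itemize}
\item For $\Re\lambda\neq0$, the space $E_\lambda$ is $L$-isotropic and is nondegenerately paired with $E_{-\bar\lambda}$. Hence $E_\lambda\oplus E_{-\bar\lambda}$ is a hyperbolic block with exactly $\dim E_\lambda$ negative directions.
\item Since $X$ is real, spectrum occurs in quadruples $\pm\lambda,\pm\bar\lambda$. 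A real $\lambda>0$ therefore contributes $\dim E_\lambda$, and a $\lambda$ in the open first quadrant contributes $2\dim E_\lambda$. This produces the terms $k_r+2k_c$.
\item For $\lambda=i\mu$ with $\mu>0$, the block is $E_{i\mu}\oplus E_{-i\mu}$. Taking real parts, it contributes $2n^{\le0}(L|_{E_{i\mu}})$.
\item For $\lambda=0$, modulo $\ker L$, the block contributes $n^{\le0}(L|_{\tilde E_0})$.
\end{itemize}
Summing these and using that the eigenspaces are mutually $L$-orthogonal gives the inequality ``$\le$'' in \eqref{index theorem identity}, because their negative parts add up inside a space of total negative index $\kappa$.

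\textbf{Step 3: equality.} For the reverse inequality we invoke Pontryagin's invariant subspace theorem for $L$-skew-adjoint operators, in the form of Krein and Azizov--Iokhvidov. It provides a $JL$-invariant, maximal nonpositive subspace $W$ with $\dim W=\kappa$. Because $W$ is finite-dimensional and invariant, it is a sum of generalized eigenvectors. Maximality forces $W$ to contain, for each block listed above, a nonpositive part of exactly the counted dimension. This yields
\[
\kappa\le k_r+2k_c+2k_i^{\le0}+k_0^{\le0}.
\]

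\textbf{Main obstacle.} We expect Step 3 to be the hardest part. The invariant subspace theorem is classical for bounded operators, or for operators with a resolvent point, whereas here $J$ is unbounded and $L$ may have a kernel. Our first attempt would be to apply the theorem to the Cayley transform $(JL-a)(JL+a)^{-1}$ for large real $a$, which is $L$-unitary. This requires showing that $\pm a$ lie in the resolvent set on the quotient. We would do this by bounding the imaginary-axis-symmetric resolvent using the decomposition $X_-\oplus X_+$: $JL$ is a finite-rank perturbation of an operator that is skew-adjoint for the positive inner product $\scl{L\cdot}{\cdot}|_{X_+}$.

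A second, subtler point concerns the nonpositive (rather than negative) counts at imaginary and zero eigenvalues. Neutral Jordan chains can carry either sign, so one must check that they are neither double counted nor missed. This is handled by a careful choice of $\tilde E_0$ and of the neutral parts of $W$.
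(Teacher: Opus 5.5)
Your proposal matches the paper: the paper gives no proof of this statement and simply quotes \cite[Theorem 2.3]{LinZeng22}, which is exactly what you propose. Your optional reproof sketch is in line with the standard Pontryagin-space argument behind that theorem: $[\cdot,\cdot]=\scl{L\cdot}{\cdot}$ on $X/\ker(L)$ with $JL$ skew-adjoint for it, $L$-orthogonality of generalized eigenspaces giving ``$\le$'', and a $JL$-invariant maximal nonpositive subspace giving equality. Nothing more is needed here.
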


\vspace{0.1cm}

\subsection{Mode stability of $\calL$ on $L^2(\RR^2)$}
\label{sec 24}

Based on the preliminaries established above, we are going to study the unstable eigenmodes of $\tilde \calL$ on $L^2(\RR^2)$ (which is equivalent to studying those of $\calL$ on $L^2(\RR^2)$), so that we can finally complete the proof of Theorem \ref{main thm: linear} (i). 

First, let us study the spectrum of $\tilde L$ defined in \eqref{tilde calL} and count $n^-(\tilde L)$. Note that the kernel of $\mathbf{1}_B (-\Delta)^{-1} \mathbf{1}_B$ satisfies
\begin{align}
    \frac{1}{4 \pi^2} \iint_{B \times B} \Big| \log |\mathbf x - \mathbf y| \Big|^2 d \mathbf x d \mathbf y <\infty, \label{Hilbert-Schmidt of tilde L}
\end{align}
and it follows that $\mathbf{1}_B \left( -\Delta \right)^{-1} \mathbf{1}_B: L^2(\RR^2) \to L^2(\RR^2)$ is a Hilbert-Schmidt operator \cite[Section 6.8.5]{helffer2013spectral} so that it is compact \cite[Theorem 9.4]{zbMATH00837307}. Together with the fact that $\sigma(\mathbf{1}_B) = \{ 0,1 \}$ and Weyl's theorem \cite[Theorem 10.11]{helffer2013spectral}, we obtain that
\[
\sigma_{\text{ess}} (\tilde L) \cap \{ \lambda \in \CC: \text{Re } \lambda <0  \} = \sigma_{\text{ess}} (\mathbf{1}_B) \cap \{ \lambda \in \CC: \text{Re } \lambda <0  \} = \emptyset,
\]
which yields
\[
\sigma\left( \tilde L \right) \cap \{ \lambda \in \CC: \text{Re } \lambda <0  \}  = \sigma_{disc} \left( \tilde L \right) \cap \{ \lambda \in \CC: \text{Re } \lambda <0  \}.
\]

Additionally, by the self-adjointness of $\tilde L: L^2(\RR^2) \to L^2(\RR^2)$, these eigenvalues are semisimple and real, and thus together with the spectral decomposition theorem of the self-adjoint operator \cite[Theorem 8.15]{helffer2013spectral}, 
\be
n^-(\tilde L ) = \sum_{\lambda <0}\dim \ker \left( \tilde L - \lambda \right).
\label{n-: expression}
\ee

Therefore, our next target is to calculate $n^-(\tilde L)$ by explicitly obtaining the precise unstable eigenmodes of $\tilde L$ on $L^2(\RR^2)$.

\begin{lemma}[Unstable eigenmodes of $\tilde L$ on $L^2(\RR^2)$]
\label{lemma: n-(L)}
As for the self-adjoint operator $\tilde L$ \eqref{tilde calL} on $L^2(\RR^2)$,
\[
\sigma(\tilde L) \cap \Big\{ \lambda \in \CC: \text{Re } \lambda <0 \Big\} = \left\lbrace 1- \frac{\mu_{1,1}^2}{\mu_{0,1}^2}  \right\rbrace,
\]
with the related unstable eigensubspace
\be
\mathrm{span } \Big\{ J_0 \left( \mu_{0,1} r \right),  J_1 \left( \mu_{0,1} r \right) \sin \theta, J_1 \left( \mu_{0,1} r \right) \cos \theta \Big\},
\label{eigenspace}
\ee
where $J_m$ is the $m$-th order Bessel function with first kind, and $\mu_{m,j}$ is the $j$-th positive zero of $J_m$. 
In particular, $n^-(\tilde L) =3$.
\end{lemma}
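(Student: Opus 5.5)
The plan is to solve the eigenvalue problem $\tilde L f=\lambda f$ with $\lambda<0$ and $f\in L^2(\RR^2)$ explicitly. By \eqref{n-: expression} and the Weyl-theorem discussion above, this set of eigenvalues and eigenfunctions is exactly $\sigma(\tilde L)\cap\{\Re\lambda<0\}$.

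\emph{Step 1: localization and reduction to a PDE.} On $B^c$ we have $\tilde L f=0$, so $\lambda f=0$ there. Since $\lambda\neq 0$, this gives $\mathrm{supp}\, f\subset \bar B$.

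Set $\psi:=(-\Delta)^{-1}f=-\frac{1}{2\pi}\log|\cdot|*f$. Because $f\in L^2$ has compact support, $\psi\in H^2_{loc}(\RR^2)$, hence $\psi\in C^1(\RR^2)$. Inside $B$ the eigenvalue equation reads $(1-\lambda)f=c_L^2\psi$. Therefore $\psi$ solves
\[
-\Delta\psi=k^2\psi \ \text{ in } B,\qquad \Delta\psi=0 \ \text{ in } \bar B^c,\qquad k^2:=\frac{c_L^2}{1-\lambda}\in(0,c_L^2).
\]
The key point is that $\lambda<0$ forces $0<k<c_L=\mu_{1,1}$.

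\emph{Step 2: Fourier decomposition and matching.} Expand $\psi$ in $\theta$. Inside $B$, regularity at the origin gives, for each mode $m\ge0$, the profile $J_m(kr)(a_m\cos m\theta+b_m\sin m\theta)$.

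Outside $B$ I need the precise behavior of the logarithmic potential of a function supported in $B$. I would use the multipole expansion $\log|x-y|=\log|x|-\sum_{m\ge1}\frac{1}{m}(|y|/|x|)^m\cos m(\theta_x-\theta_y)$ for $|x|>|y|$. It shows that on $|x|\ge1$:
\begin{itemize}
\item the $m$-th mode of $\psi$ is exactly $C_m r^{-m}$ for $m\ge1$, with no $r^m$ component;
\item the radial mode is exactly $-\frac{1}{2\pi}\left(\int f\right)\log r$, with no additive constant.
\end{itemize}
Next I impose $C^1$ matching at $r=1$ for each mode.
\begin{itemize}
\item For $m\ge1$, continuity of $\psi$ and $\partial_r\psi$ gives $kJ_m'(k)=-mJ_m(k)$, i.e.\ $kJ_{m-1}(k)=0$, so $J_{m-1}(k)=0$.
\item For $m=0$, continuity at $r=1$ gives $J_0(k)=0$, since $\log 1=0$. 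The derivative condition is then automatic from $\int f=-\oint\partial_r\psi$.
\end{itemize}

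\emph{Step 3: counting.} I use the ordering of Bessel zeros $\mu_{0,1}<\mu_{1,1}=c_L<\mu_{0,2}$ and $\mu_{m-1,1}\ge\mu_{1,1}$ for $m\ge2$. Together with $0<k<c_L$, the only admissible solution is $k=\mu_{0,1}$, in the modes $m=0$ and $m=1$ (for $m=1$ the condition is again $J_0(k)=0$).

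This gives $\lambda=1-c_L^2/\mu_{0,1}^2=1-\mu_{1,1}^2/\mu_{0,1}^2$. The eigenfunctions are $f=\frac{c_L^2}{1-\lambda}\psi\,\mathbf 1_B=k^2\psi\,\mathbf 1_B$, so the eigenspace is spanned by $J_0(\mu_{0,1}r)$, $J_1(\mu_{0,1}r)\cos\theta$ and $J_1(\mu_{0,1}r)\sin\theta$ (all cut off to $B$).

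For the converse, I would check that each of these functions, extended by the explicit exterior harmonic functions, indeed satisfies $\psi=(-\Delta)^{-1}f$. This holds by uniqueness of the decaying/log-normalized potential. Hence $n^-(\tilde L)=3$.

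The main obstacle is Step 2's exterior analysis: rigorously excluding the growing harmonics $r^m$ and the additive constant in the radial mode. I would handle this through the explicit multipole expansion of the log kernel (valid since $\mathrm{supp}\, f\subset\bar B$), applied mode by mode via Fourier coefficients in $L^2(S^1)$.
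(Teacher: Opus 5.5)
Your proposal is correct and follows essentially the paper's proof. Both localize $f$ to $B$, split into a Helmholtz problem inside and a harmonic one outside, expand in angular modes, discard $Y_m$ and the growing exterior harmonics and additive constant, match $C^1$ at $r=1$ via $xJ_m'+mJ_m=xJ_{m-1}$, and count zeros using $k<c_L=\mu_{1,1}$. The only differences are that you kill $r^m$ and the constant with the multipole expansion rather than the paper's log-growth bound and mean-value identity, and that you also check the converse.

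One small repair: $H^2_{loc}(\RR^2)\not\subset C^1$ in two dimensions, so justify the $C^1$ matching in one of two ways. Either use the paper's bootstrap: $\psi$ is continuous, so $f=k^2\psi\mathbf{1}_B\in L^\infty$, so $\psi\in W^{2,p}_{loc}$ for all $p<\infty$. Or note that each angular coefficient $R_m$ solves a 1D ODE with $L^2_{loc}$ forcing and is therefore $C^1$ in $r$ on $(0,\infty)$.
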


\begin{proof}
    Assume that $f \in L^2(\RR^2)$ satisfies $\tilde L f = \lambda f$ with $\lambda <0$, then with the definition of $\tilde L$ given in \eqref{tilde calL},
    \[
    \mathbf{1}_B f - c_L^2 \mathbf{1}_B (-\Delta)^{-1} \mathbf{1}_B f = \lambda f, \quad \Rightarrow \quad \text{supp } f \subset B, 
    \]
    so that $f$ solves the equation
    \be
    - \mathbf{1}_B (-\Delta)^{-1} f = \frac{\lambda-1}{c_L^2} f, \quad \text{ with }\; \text{supp } f \subset B.
    \label{eq1: f}
    \ee
    If we denote $\kappa^2: = \frac{c_L^2}{1-\lambda}$ and define $\psi:= -(-\Delta)^{-1} f$, then
    \be
    \begin{cases}
        \Delta \psi + \kappa^2 \psi =0, & \text{in } B,\\
        \Delta \psi = 0, & \text{in } B^c.
    \end{cases}
    \label{eigenfunction: eq 1}
    \ee
    Additionally, $\psi$ has the following pointwise estimate:
    \begin{align}
        |\psi(\mathbf x)| =  \frac{1}{2 \pi} \Bigg| \int_{\RR^2} \log |\mathbf x -\mathbf y | \mathbf{1}_B(\mathbf y) f(\mathbf y) d \mathbf y \Bigg| 
        \le C \| f\|_{L^2(\RR^2)} \Big( 1+ \log \la \mathbf x  \ra \Big), \text{ for any } \mathbf x \in \RR^2, \label{eigenfunction: boundedness}
    \end{align}
    with some uniform constant $C>0$, and thus it yields that
    \be
    \| f\|_{L^\infty} \le \frac{1}{\kappa^2} \big\|  \mathbf{1}_B(\mathbf x) \psi(\mathbf x) \big\|_{L^\infty} \le \frac{C}{\kappa^2} \| f \|_{L^\infty}.
    \label{f: pointwise estimate 2}
    \ee
    
    Next, we are going to solve \eqref{eigenfunction: eq 1}. By the spherical decomposition $\psi (\mathbf x) = \sum_{m \in \ZZ_{\ge 0}} R_m(r) e^{im \theta}$, together with the fact that $\Delta$ is invariant in each angular mode, we know that each $R_m$ solves
     \begin{equation}
        \begin{cases}
            R_m''(r)+\frac{1}{r}R_m'(r)+\dr{\kappa^2-\frac{m^2}{r^2}}R_m(r)=0, & \text{with } r<1,\\
             R_m''(r)+\frac{1}{r}R_m'(r)-\frac{m^2}{r^2}R_m(r) = 0, & \text{with } r>1.
        \end{cases}
        \label{eq: Rm}
    \end{equation}
    
    When $r<1$, the general solution is 
    \begin{equation}
        R_m(r) = c_{1,m}J_{m}(\kappa r)+c_{2,m} Y_m(\kappa r),
    \end{equation}
    where $J_m$ and $Y_m$ are the Bessel functions of the first kind and the second kind, respectively. By the pointwise boundedness of $f$ near the origin shown in \eqref{eigenfunction: boundedness} together with \eqref{eq1: f}, we know that $c_{2,m} = 0$ for each $m \ge 0$.
    
    When $r>1$, the general solution is 
    \begin{align}\label{lemma: Bessels_far field solutions}
    \begin{split}
        &R_m(r) = b_{1,m}\log(r)+b_{2,m},\quad\text{when }m=0,\\
        &R_m(r) = b_{1,m}r^{-m}+b_{2,m}r^{m},\quad\text{when }m\geq 1.
    \end{split}
    \end{align}
    By the requirement that the exterior part of $\psi$ is the actual Newtonian potential of a compactly supported source \eqref{eq1: f}, we claim that $b_{2,m}=0$ above for each mode $m \ge 0$. Indeed, when $m\geq1$, existence of algebraically growing part is not possible by the pointwise estimate given in \eqref{eigenfunction: boundedness}. When $m=0$, we observe that for any radial source $f$ which is supported inside $B$,
    \begin{align}
    \begin{split}
         (-\Delta)^{-1}f(r) &= -\frac{1}{2\pi}\int_{0}^{1}\int_{0}^{2\pi}\log|r-\rho e^{i\theta}|\rho f(\rho)\;d\theta d\rho \\
         &= -\dr{\int_0^1\rho f(\rho)\;d\rho}\log(r),\quad r>1,
    \end{split}
    \end{align}
    where we use the identity
    \[
       \frac{1}{2\pi} \int_0^{2\pi}\log|r-\rho e^{i\theta}|\;d\theta = \log(r),\quad r>1,
    \]
    by the mean value property of harmonic functions. Hence, there is no additive constant in the exterior part when $m=0$, i.e. $b_{2,0}=0$. 

    In addition, by \eqref{eigenfunction: boundedness} and \eqref{f: pointwise estimate 2}, it is easy to check that each $R_m(r)$ is $C^1$ continuous at $r=1$. In particular, $m\geq 1$, this requires
    \begin{align}
    \begin{split}
        &c_1 J_m(\kappa) = b_{1,m},\quad\text{and}\quad c_1\kappa J_m'(\kappa) = -mb_{1,m}, \\
        \Longrightarrow\;& \kappa J'_m(\kappa)+mJ_m(\kappa) = 0.
    \end{split}
    \end{align}
    Using the Bessel identity
    \be
        xJ'_m(x)+mJ_m(x) = xJ_{m-1}(x),
        \label{Bessel identity}
    \ee
    this becomes
    \[
        J_{m-1}(\kappa) = 0.
    \]
    Thus, it means that $\kappa = \kappa_{m,j} := \mu_{m-1,j}$, the $j$-th positive zero of $(m-1)$-th Bessel function $J_{m-1}$. 
    
    When it comes to $m=0$, since $\log(1)=0$, it requires \[
        J_0(\kappa) = 0\;\Longrightarrow\; \text{$\kappa$ can be chosen as }\kappa_{0,j} = \mu_{0,j}.
    \]
    Then, once we choose $b_1 = c_1\kappa J'_0(\kappa)$, the first derivative is matched at $r=1$. 
    
    In conclusion, the eigenvalue problem \eqref{eigenfunction: eq 1} is admissible only when $\kappa = \kappa_{m,j}= \mu_{m-1,j}$ with $m \ge 0$ (we denote $\mu_{-1,m}:= \mu_{0,m}$), and this requires the related eigenvalues to be (note that $c_L = \mu_{1,1}$)
    \[
    \lambda_{m,j} = 1- \frac{c_L^2}{\kappa^2} = 1- \frac{c_L^2}{\mu_{m-1,j}^2} = 1- \frac{\mu_{1,1}^2}{\mu_{m-1,j}^2}, \quad \text{ with } m \in \ZZ_{\ge 0} \text{ and } j \in \ZZ_{\ge 1}.
    \]
    Once we recall the distribution of zeros of the Bessel function of the first kind $J_m$, the possibility leading to $\lambda_{m,j} <0$ is only when $(m,j)= (0,1)$ and $(m,j)=(1,1)$ (both scenarios correspond to the eigenvalue $1- \frac{\mu_{1,1}^2}{\mu_{0,1}^2}$). In particular, the related eigen-subspace can be spanned as shown in \eqref{eigenspace}. Hence we have concluded the proof.
\end{proof}

Next we find that another essential ingredient is the explicit non-positive directions of $\scl{\tilde L\cdot}{\cdot}$ arising from the symmetries of the Lamb-Chaplygin dipole. Specifically, we note that 
\[
\wl,\quad \pa_{x_2}\wl, \quad \text{ and } \quad  \mathbf{x}^\perp\cdot\nabla\wl,
\]
are generated by the scaling of the magnitude, translation in the $x_2$ variable, and rotation, respectively, which give a lower bound $k_{0}^{\le 0}(\tilde \calL) \ge 3$. Precisely, we have the following results:
\begin{lemma}[Generalized kernel generated by symmetries]
\label{lemma: non-positive directions of the bilinear form of L}
    The following algebraic identity holds true
    \begin{align}
        \tilde L\wl = -c^2_L\one_B x_2,\quad \tilde L (\pa_{x_2}\wl) = -c^2_L\one_B,\quad \tilde L(\mathbf{x}^\perp \cdot \nabla\wl) = -c^2_Lx_1\one_B.
        \label{algebraic relation}
    \end{align}
    Specifically, we have
    \be
        \{\wl,\,\pa_{x_2}\wl,\, \mathbf{x}^\perp\cdot\nabla\wl\}\subset \Big\{f\in L^2(\RR^2):\tilde \calL^k u = 0,\;\exists\, k\in\mathbb{Z}_{\geq 1}\Big\} \setminus\ker(\tilde L),
    \label{generalized ker of calL}
    \ee
    and
    \be
    (\tilde{L} g, g) \le 0, \quad \mathrm{ for\,\, any\,\, } g \in \mathrm{span}\big\{ \wl,\,\pa_{x_2}\wl,\, \mathbf{x}^\perp\cdot\nabla\wl \big\}.
    \label{nonpositive of L: quadratic form}
    \ee
    In particular, $k_{0}^{\le 0}(\tilde \calL) \ge 3$. 
\end{lemma}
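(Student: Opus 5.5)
The plan is to derive all three identities in \eqref{algebraic relation} from one base identity for $\wl$, and then obtain \eqref{generalized ker of calL} and \eqref{nonpositive of L: quadratic form} by applying $J$ to the right-hand sides and computing a few integrals.

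\textbf{Base identity.} Since $(-\Delta)^{-1}\wl=-\psil$ and $\psil=\psilm-x_2$, we have
\[
\tilde L\wl=\mathbf{1}_B\bigl(\wl+c_L^2\psilm-c_L^2x_2\bigr).
\]
By \eqref{lamb dipole: algebraic relation} this equals $-c_L^2\mathbf{1}_Bx_2$.

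\textbf{Regularity.} Because $J_1(c_L)=0$, the function $\wl$ is continuous across $\partial B$ and Lipschitz. Hence $\pa_{x_2}\wl$ and $\mathbf{x}^\perp\cdot\na\wl$ lie in $L^2$, are supported in $\bar B$, and $\wl$ vanishes on $\partial B$.

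\textbf{Commutation.} The operators $\pa_{x_j}$ and $\mathbf{x}^\perp\cdot\na$ commute with $(-\Delta)^{-1}$; this is valid for the compactly supported source $\wl$, for instance by differentiating the logarithmic convolution. The rotation field $\mathbf{x}^\perp\cdot\na$ also commutes with $\mathbf{1}_B$, since $B$ is radial.

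\textbf{Remaining identities.} Differentiating the relation $\wl+c_L^2\psil=-c_L^2x_2$, which holds on $B$, gives the following.
\begin{itemize}
\item Applying $\pa_{x_2}$ gives $\tilde L\pa_{x_2}\wl=-c_L^2\mathbf{1}_B$.
\item Applying $\mathbf{x}^\perp\cdot\na$ and using $\mathbf{x}^\perp\cdot\na x_2=x_1$ gives $\tilde L(\mathbf{x}^\perp\cdot\na\wl)=-c_L^2x_1\mathbf{1}_B$.
\item Applying $\pa_{x_1}$ gives the by-product $\tilde L\pa_{x_1}\wl=0$.
\end{itemize}

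\textbf{Generalized kernel.} Next apply $J=\na^\perp\psilm\cdot\na$ to the three right-hand sides.
\begin{itemize}
\item $J(\mathbf{1}_B)=0$ distributionally, since $\psilm|_{\partial B}=0$ means the flow is tangent to $\partial B$. Hence $\tilde\calL\pa_{x_2}\wl=0$.
\item Using $\psilm=-c_L^{-2}\wl$ in $B$, we get $J(\mathbf{1}_Bx_2)=\mathbf{1}_B\pa_{x_1}\psilm=-c_L^{-2}\pa_{x_1}\wl$. Therefore $\tilde\calL\wl=\pa_{x_1}\wl$, and then $\tilde\calL^2\wl=J\tilde L\pa_{x_1}\wl=0$.
\item Similarly $J(\mathbf{1}_Bx_1)=-\mathbf{1}_B\pa_{x_2}\psilm=c_L^{-2}\pa_{x_2}\wl$. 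Therefore $\tilde\calL(\mathbf{x}^\perp\cdot\na\wl)=-\pa_{x_2}\wl$, and $\tilde\calL^2(\mathbf{x}^\perp\cdot\na\wl)=0$.
\end{itemize}
None of the three functions lies in $\ker\tilde L$, because their images $-c_L^2\mathbf{1}_Bx_2$, $-c_L^2\mathbf{1}_B$ and $-c_L^2\mathbf{1}_Bx_1$ are nonzero. In fact, the map $g\mapsto\tilde Lg$ restricted to $S:=\mathrm{span}\{\wl,\pa_{x_2}\wl,\mathbf{x}^\perp\cdot\na\wl\}$ is injective, since the functions $\mathbf{1}_B,\mathbf{1}_Bx_1,\mathbf{1}_Bx_2$ are linearly independent. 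Hence $S\cap\ker\tilde L=\{0\}$ and $\dim S=3$.

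\textbf{Quadratic form.} Take $g=a\wl+b\pa_{x_2}\wl+c\,\mathbf{x}^\perp\cdot\na\wl$. Then
\[
(\tilde Lg,g)=-c_L^2\bigl(ax_2+b+cx_1,\;g\bigr)_{L^2(B)}.
\]
The parities are as follows: $\wl\propto J_1\sin\theta$ is odd in $x_2$ and even in $x_1$; $\pa_{x_2}\wl$ is even in both variables; and $\mathbf{x}^\perp\cdot\na\wl\propto J_1\cos\theta$ is odd in $x_1$ and even in $x_2$. These parities kill every cross term. The one remaining mixed pairing is $(1,\pa_{x_2}\wl)_B=0$, which follows by integrating by parts with $\wl|_{\partial B}=0$. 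The diagonal terms are
\[
(x_2,\wl)=2I[\wl]=2\pi,\qquad (x_1,\mathbf{x}^\perp\cdot\na\wl)=-(\mathbf{x}^\perp\cdot\na x_1,\wl)=(x_2,\wl)=2\pi,
\]
where the second uses $\na\cdot\mathbf{x}^\perp=0$ and the vanishing boundary values. Therefore
\[
(\tilde Lg,g)=-2\pi c_L^2(a^2+c^2)\le 0,
\]
which is \eqref{nonpositive of L: quadratic form}.

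\textbf{Conclusion.} Since $S\subset E_0$ and $S\cap\ker\tilde L=\{0\}$, we can choose the complement $\tilde E_0\supset S$ with $E_0=\ker\tilde L\oplus\tilde E_0$; this is possible even if $E_0$ is infinite-dimensional. By the independence of $k_0^{\le0}$ from the choice of $\tilde E_0$, we conclude $k_0^{\le0}(\tilde\calL)\ge n^{\le0}(\tilde L|_S)=3$.

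The main obstacle is the justification of the distributional identities: that $J(\mathbf{1}_B)=0$ and $J(\mathbf{1}_Bx_j)$ produce no boundary measure on $\partial B$ (this uses $\na^\perp\psilm\cdot\mathbf{n}=0$), and that the derivatives commute with $(-\Delta)^{-1}$ despite the logarithmic kernel. I would handle both by testing against smooth functions and integrating by parts once.
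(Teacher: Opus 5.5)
Your proposal is correct and follows essentially the paper's route: the base identity $\tilde L\wl=-c_L^2\one_Bx_2$ is differentiated along the symmetry generators, $J$ is evaluated on $\one_B$, $\one_Bx_1$, $\one_Bx_2$ using tangency of the flow at $\partial B$, and parity is used to kill the cross terms in the quadratic form. You add two small refinements to the paper's argument: you evaluate $(\tilde L(\mathbf{x}^\perp\cdot\nabla\wl),\mathbf{x}^\perp\cdot\nabla\wl)=-2\pi c_L^2$ by integrating by parts back to $2I[\wl]$ rather than via the explicit Bessel integral, and you check $S\cap\ker\tilde L=\{0\}$ and $\dim S=3$ from the independence of $\one_B,\one_Bx_1,\one_Bx_2$, which makes the final deduction $k_0^{\le 0}\ge 3$ more complete than the paper's one-line ``in particular''.
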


\begin{remark}
We do not take $\pa_{x_1} \wl$ into account. We note that $\pa_{x_1} \wl$ is generated by the translation invariance in $x_1$, which in addition satisfies $\tilde L (\pa_{x_1} \wl) =0$. Thus, by definition it is not a candidate for counting $k_{0}^{\le 0}(\tilde \calL)$.
\end{remark}

\begin{proof}[Proof of Lemma \ref{lemma: non-positive directions of the bilinear form of L}]
    The computation is straightforward, using explicit expressions of $\wl$ and $\psil$ given in \eqref{lamb dipole: algebraic relation} and \eqref{moving frame: stream function}.
    
    \noindent $\bullet$ As for $\wl$, we note that
    \[
       \tilde L\wl = \wl + c^2_L\one_B\Delta^{-1}\wl = \wl+c_L^2\one_B(\psilm-x_2) = -c^2_L\one_B x_2.
    \]
    Hence, $\wl\notin\ker(L)$ and 
    \begin{equation}
        \left( \tilde L \wl , \wl \right) = -c^2_L\left( w_L, \one_B x_2 \right)<0.
        \label{inner prod 1}
    \end{equation}
    Moreover, we compute that
    \begin{align*}
        \tilde \calL \wl 
        & = -c^2_LJ(\one_B x_2)  \\
        & = -c^2_L\one_B\nabla^\perp\psilm\cdot \nabla x_2-c^2_Lx_2\nabla^\perp\psilm\cdot\nabla(\one_B) \\
        & = -c^2_L\nabla^\perp \left( \one_B\psilm \right)\cdot \nabla x_2 +c^2_L  \psilm \nabla^\perp \one_B \cdot \nabla x_2 \\
        & = -c^2_L\nabla^\perp \left( \one_B\psilm \right)\cdot \nabla x_2 = \pa_{x_1}\wl,
    \end{align*}
    where we use $\nabla^\perp\psilm \perp \mathbf n$ on $\partial B$ and $\psilm|_{\pa B}=0$. Moreover, with the translation invariance of 2D Euler equation, we obtain that $\tilde \calL (\partial_{x_1} \wl) =0$ and thus $\tilde \calL^2\wl=0$.

    \noindent $\bullet$ As for $\pa_{x_2} \wl$, similar to the previous argument, observe that
    \begin{align*}
    \begin{split}
        \tilde L(\pa_{x_2}\wl) &= \pa_{x_2}\wl + c_L^2\one_B\Delta^{-1}(\pa_{x_2}\wl) \\
        &=\pa_{x_2}\wl+c^2_L\one_B\pa_{x_2}(\psilm-x_2)\\
        &=-c^2_L\one_B,
    \end{split}
    \end{align*}
    we know that $\pa_{x_2}\wl\notin\ker(\tilde L)$. Furthermore, using integration by parts and the fact that  $\wl|_{\pa B}=0$,
    \begin{equation}
        \scl{\tilde L(\pa_{x_2}\wl)}{\pa_{x_2}\wl} = -c^2_L\scl{\one_B}{\pa_{x_2}\wl} =0.
        \label{inner prod 2}
    \end{equation}
    We also compute that
    \begin{align*}
       \tilde  \calL (\pa_{x_2}\wl) = -c^2_LJ(\one_B) = -c^2_L\nabla^\perp\psilm\cdot\nabla(\one_B)=0. 
    \end{align*}
    Thus, $\pa_{x_2}\wl$ is a zero direction of $\scl{\tilde L\cdot}{\cdot}$ which lies in the generalized kernel of $\tilde \calL$ but not in the kernel of $\tilde L$.

     \noindent $\bullet$ As for $\mathbf x^\perp\cdot\nabla\wl$, since $\mathbf x^\perp\cdot\nabla =\pa_\theta$ and the Laplacian is rotation-invariant,
    \[
        [\mathbf x^\perp\cdot\nabla,\Delta^{-1}]=0,
    \]
    and it follows that
    \begin{align*}
    \begin{split}
        \tilde L(\mathbf x^\perp\cdot\nabla\wl)&= \mathbf x^\perp\cdot\nabla\wl+c^2_L\one_B\Delta^{-1}( \mathbf x^\perp\cdot\nabla\wl)\\
        &=\mathbf x^\perp\cdot\nabla\wl+c^2_L\one_B \mathbf x^\perp\cdot\nabla(\psilm-x_2)\\
        &=-c^2_L\one_B \mathbf x^\perp\cdot\nabla x_2  = -c^2_L x_1\one_B.
    \end{split}
    \end{align*}
    Then it yields that
    \begin{align*}
    \begin{split}
        \tilde \calL(\mathbf x^\perp\cdot\nabla\wl) 
        &= -c^2_LJ(x_1\one_B) = -c^2_L\nabla^\perp\psilm\cdot\nabla(x_1\one_B) \\
        &= c^2_L\one_B\pa_{x_2}\psilm = -\pa_{x_2}\wl,
    \end{split}
    \end{align*}
    and it holds that $\tilde \calL^2 (\mathbf x^\perp \cdot \na \wl) =0$ with the previous argument.
    Additionally, Since $\wl(\mathbf x) = -\frac{2c_LJ_1(c_Lr)}{J_0(c_L)}\sin(\theta)\one_B$ and $-\frac{2c_LJ_1(c_Lr)}{J_0(c_L)}\one_B\geq 0$,
    \begin{align}
        \scl{\tilde L(\mathbf x^\perp\cdot\nabla\wl)}{\mathbf x^\perp\cdot\nabla\wl} &=-c^2_L\scl{x_1\one_B}{\pa_\theta\wl} \notag \\
        &=-c^2_L\scl{r\cos(\theta)\one_B}{-\frac{2c_LJ_1(c_Lr)}{J_0(c_L)}\cos(\theta)} \notag \\
        &=\pi\int_0^1\frac{2c_L^3J_1(c_Lr)\,r}{J_0(c_L)}\;dr<0.
    \label{inner product 3}
    \end{align}
    Therefore, $\mathbf x^\perp\cdot\nabla\wl$ is another negative direction of $\scl{\tilde L\cdot}{\cdot}$ which lies in the generalized kernel of $\calL$ but not in the kernel of $\tilde L$. 

    \mbox{}
    
    In conclusion, combining all of the previous arguments concludes \eqref{algebraic relation} and \eqref{generalized ker of calL}. And it remains to prove \eqref{nonpositive of L: quadratic form}. Precisely, recalling \eqref{algebraic relation},
    \be
    \left( \tilde L (\pa_{x_2} \wl), \mathbf{x}^\perp \cdot \na \wl \right)
    =  \left( -c_L^2 \mathbf{1}_B x_2, \mathbf{x}^\perp \cdot \na \wl \right) =0.
    \label{inner prod 4}
    \ee
    Additionally, since $\pa_{x_2} \wl$ and $\mathbf x^\perp \cdot \na \wl$ are both even symmetric with respect to $x$-axis, and $\wl$ is odd symmetric with respect to $x$-axis, together with the fact that $\tilde L$ is invariant under the symmetry with respect to $x$-axis,
    \be
    \left( \tilde L (\pa_{x_2} \wl), \wl \right) =\left( \tilde L \left( \mathbf{x}^\perp \cdot \na  \wl \right), \wl \right) =0.
    \label{inner prod 5}
    \ee
    Consequently, combining \eqref{inner prod 1}, \eqref{inner prod 2}, \eqref{inner product 3}, \eqref{inner prod 4} and \eqref{inner prod 5}, for $g =a \wl + b  \partial_{2} \wl + c \mathbf{x}^\perp \cdot \na \wl$ with any $a,b,c \in \RR$,
    \begin{align*}
        & \quad \left( \tilde L g,g \right)
        = \left( \tilde L \left( a \wl + b  \partial_{2} \wl + c \mathbf{x}^\perp \cdot \na \wl \right),a \wl + b  \partial_{2} \wl + c \mathbf{x}^\perp \cdot \na \wl\right) \\
        & = a^2 \left( \tilde L \wl, \wl \right) + b^2 \left( \tilde L (\pa_{x_2} \wl), \pa_{x_2} \wl \right) + c^2 \left( \tilde L (\mathbf x^\perp \cdot \na \wl), \mathbf x^\perp \cdot \na \wl \right) \\
        & = a^2 \left( \tilde L \wl, \wl \right) + c^2 \left( \tilde L (\mathbf x^\perp \cdot \na \wl), \mathbf x^\perp \cdot \na \wl \right)  \le 0,
    \end{align*}
    which finishes the proof of \eqref{nonpositive of L: quadratic form}.
\end{proof}

\mbox{}
Based on the previous preliminaries, we are ready to prove Theorem \ref{main thm: linear} (\romannumeral1).
\begin{proof}[Proof of Theorem \ref{main thm: linear} (\romannumeral1)]
Recalling Corollary \ref{lemma: reduce eigenproblem to B} and Remark \ref{rmk: simplify to Hamiltonian form}, to study the unstable eigenmodes of $\calL$ \eqref{linearized operator: almost Hamiltonian form} on $L^2(\RR^2)$, it suffices to study $\tilde \calL$ \eqref{tilde calL} on $L^2(\RR^2)$. Precisely, by Lemma \ref{lemma: n-(L)} and Lemma \ref{lemma: non-positive directions of the bilinear form of L}, we obtain that $n^-(\tilde L) =3$ and $k^{\le 0}_0(\tilde \calL) \ge 3$. Since $\tilde \calL$ satisfies \ref{H1}, \ref{H2} and \ref{H3}, we plug this into the index formula \eqref{index theorem identity} to obtain that
\[
k_r(\tilde \calL )+2k_c(\tilde \calL )+2k_i^{\leq0}(\tilde \calL ) + 3 \le k_r(\tilde \calL )+2k_c(\tilde \calL )+2k_i^{\leq0}(\tilde \calL )+ k_0^{\leq0}(\tilde \calL )=n^{-}(\tilde L) =3,
\]
and this yields that
\[
k_r(\tilde \calL )= k_c(\tilde \calL ) =0,
\]
which in turn shows that
\[
k_r(\calL)= k_c(\calL )  =0.
\]
Consequently, we have finished the proof of Theorem \ref{main thm: linear}.
\end{proof}

\subsection{Spectral stability of $\calL$ on weighted $L^2$-spaces $X_\a$}\label{sec 25}
This subsection is devoted to the spectral stability of $\calL_\alpha$ on $X_\alpha$ for any fixed $\a>0$ (see \eqref{definition of weighted L2 spaces X_alpha} for its definition), so that we finish the proof of Theorem \ref{main thm: linear} (\romannumeral2). Firstly, recalling the "nearly" Hamiltonian form of $\calL_\a$ in \eqref{linearized operator: almost Hamiltonian form}, $\calL_\a$ can be written as
\be
\begin{aligned}
     \calL_\alpha = J_\a + \calK_\a,  \quad &  \text{ with }  \quad J_\a := \na^\perp \psilm \cdot \na : D(J_\a) \subset X_\a \to X_\alpha, \\
    & \text{ and } \quad   \calK_\a := c_L^2 \mathbf{1}_B \na^\perp \psilm \cdot \na \Delta^{-1},
    \end{aligned}
\label{decomp: L}
\ee
with the domain of $J_\a$ defined by 
\be
D(J_\a)= \{ f \in X_\alpha : Jf \in X_\a \text{ in  } \calD'(\RR^2) \} \subset X_\a.
\label{Dalpha(J)} \ee
Here we have used the fact that (see, for example, the derivations of \eqref{halmitonian 1} and \eqref{halmitonian 2})
\[
      c_L^2  \na^\perp \psilm \cdot \na (\mathbf{1}_B\Delta^{-1}\cdot)=c_L^2 \mathbf{1}_B \na^\perp \psilm \cdot \na \Delta^{-1}.
\]

\begin{lemma}[Compactness of $\calK_\alpha$ on $X_\a$]\label{Lemma: compactness of K}
    For any fixed $\alpha>0$, $\calK_\a: X_\a \to X_\a$ is a compact operator. 
\end{lemma}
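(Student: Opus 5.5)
Because of the factor $\mathbf{1}_B$, the weight $\la \mathbf x\ra^{2\a}$ is bounded above and below on $B$. So it suffices to show that the map
\[
f\mapsto \mathbf{1}_B\,\na^\perp\psilm\cdot\na\Delta^{-1} f
\]
is compact from $X_\a$ into $L^2(B)$. Since $\na^\perp\psilm$ is bounded on $B$ (indeed $W^{1,\infty}$), it is in turn enough to show that $f\mapsto \mathbf{1}_B\na\Delta^{-1}f$ is compact from $X_\a$ to $L^2(B;\RR^2)$. The plan is to prove that this operator is bounded from $X_\a$ into $H^1(B)$ and then invoke Rellich--Kondrachov on the bounded Lipschitz domain $B$.

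The first step is a local $L^2$ bound on the velocity $u=\na\Delta^{-1}f$. The key point is that $X_\a\hookrightarrow L^p(\RR^2)$ for some $p<2$. By Hölder,
\[
\|f\|_{L^p}\le \|\la\mathbf x\ra^{\a}f\|_{L^2}\,\|\la\mathbf x\ra^{-\a}\|_{L^{2p/(2-p)}},
\]
and the last factor is finite once $2p\a/(2-p)>2$, i.e.\ for $p\in(2/(1+\a),2)$. Hardy--Littlewood--Sobolev applied to the kernel $|\mathbf x|^{-1}$ then gives $\|u\|_{L^{q}}\lesssim\|f\|_{L^p}$ with $1/q=1/p-1/2$, so $q<\infty$, and restricting to the bounded set $B$ gives $\|u\|_{L^2(B)}\lesssim\|f\|_{X_\a}$. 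This is exactly where $\a>0$ enters; for $\a=0$ the low-frequency singularity breaks the bound, consistent with the earlier remark on non-closability. I expect this to be the only delicate step, and the HLS route above is the one I would try first. An alternative would be to split $f=f\mathbf{1}_{B_2}+f\mathbf{1}_{B_2^c}$ and handle the far part by a direct kernel estimate, since for $|\mathbf x|<1$ and $|\mathbf y|>2$ the kernel is bounded by $|\mathbf y|^{-1}$ and $\la\mathbf y\ra^{-1-\a}\in L^2$.

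The second step is the derivative bound. Here $\na u=\na\na\Delta^{-1}f$ consists of Riesz transforms of $f$, so $\|\na u\|_{L^2(\RR^2)}\lesssim\|f\|_{L^2}\le\|f\|_{X_\a}$ by Calderón--Zygmund. Combining the two steps, $u|_B$ is bounded in $H^1(B)$, hence precompact in $L^2(B)$. Multiplying by the bounded function $\mathbf{1}_B\na^\perp\psilm$ preserves compactness, which gives compactness of $\calK_\a:X_\a\to X_\a$. A more concrete alternative to the abstract argument is to take a bounded sequence in $X_\a$, extract a weakly convergent subsequence, and check that the corresponding $u_n$ converge strongly in $L^2(B)$ using the two uniform bounds together with Rellich.
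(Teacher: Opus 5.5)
Your argument is correct, and its analytic core is the paper's. Both use the same weighted H\"older plus Hardy--Littlewood--Sobolev bound for $\na\Delta^{-1}f$ (your exponent $2p/(2-p)$ is the paper's $q$; when $\a\ge 1$ also take $p>1$ so that HLS applies), the same Calder\'on--Zygmund bound on $\na^2\Delta^{-1}f$, and Rellich--Kondrachov. The difference is where the multiplier $\mathbf v=c_L^2\mathbf 1_B\na^\perp\psilm$ sits when compactness is extracted. The paper applies Rellich to the output $\mathbf v\cdot\na\Delta^{-1}f$ itself. Since $\mathbf v$ jumps across $\pa B$ (on $\pa B$ one has $|\na^\perp\psilm|=2|\sin\theta|$), this product is in general not in $H^1(\RR^2)$. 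So the paper first replaces $\mathbf v$ by smooth $\mathbf v_n$ supported in $B$, shows each $\calK_{\a,n}$ is compact, and passes to the limit in operator norm. That limit implicitly uses $\mathbf v_n\to\mathbf v$ in some $L^r(B)$ with $r<\infty$, together with the $L^q$ bound, because a discontinuous $\mathbf v$ cannot be approximated uniformly. You instead factor $\calK_\a$ as the bounded multiplication by $\mathbf v$ on $L^2(B)$ composed with $f\mapsto(\na\Delta^{-1}f)|_B$, and apply Rellich on $H^1(B)$ to the latter. This needs only $\mathbf v\in L^\infty(B)$ and removes the approximation step entirely. The only thing the paper's arrangement buys is that Rellich is used for $H^1$ functions compactly supported in $B$ rather than on $H^1(B)$, which for the unit disk makes no difference.
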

\begin{proof}
    For any fixed $\a>0$, we choose $X_\a$ as in \eqref{definition of weighted L2 spaces X_alpha}. Recalling \eqref{decomp: L}, for brevity, we denote
    \[
        \calK_\alpha :=\mathbf{v}\cdot\nabla\Delta^{-1},\quad \mathbf{v}:=  c_L^2 \mathbf{1}_B \na^\perp \psilm.
    \]
    Then, $\calK_\a$ admits the following explicit expression
    \[
      \calK_\alpha f( \mathbf x) = \int_{\RR^2} k( \mathbf x, \mathbf y)f( \mathbf y)\,d \mathbf y,\quad k( \mathbf x, \mathbf y):= \frac{1}{2\pi}\mathbf{v}( \mathbf x)\cdot\frac{ \mathbf x- \mathbf y}{| \mathbf x-  \mathbf y|^2},
    \]
    Since $\mathbf{v}$ is bounded and $\text{supp}(\mathbf{v})\subset B$, there exists a uniform constant $C>0$ such that
    \[
        |k( \mathbf x,  \mathbf y)|\leq C\mathbf{1}_B( \mathbf x)\frac{1}{| \mathbf x-  \mathbf y|}.
    \]
    Hence for any $q > \max \big\{ 2,  \frac{2}{\a} \big\}$, by the Hardy-Littlewood-Sobolev inequality and H\"older's inequality, it holds for any $f\in X_\alpha$ that
    \begin{align}
     \| \calK_\alpha f \|_{L^q(\RR^2)}
    & \le C \Big\| \int_{\RR^2} \frac{f( \mathbf y)}{| \mathbf x- \mathbf y|} d \mathbf y \Big\|_{L^q(\RR^2)}
    \le  C \| f\|_{L^\frac{2q}{q+2}(\RR^2)} \notag \\
    & \le C \| \la  \mathbf x\ra^\a f \|_{L^2(\RR^2)} \| \la  \mathbf x \ra^{-\alpha} \|_{L^q(\RR^2)} \le C   \| \la  \mathbf x\ra^\a f \|_{L^2(\RR^2)}.
    \label{HLS: weighted version}
    \end{align}
    Since $\calK_\alpha f$ is compactly supported in $B$, by H\"older's inequality we have
    \[
    \| \la  \mathbf x \ra^\alpha \calK_\alpha f \|_{L^2(\RR^2)}
    \le C \|\calK_\alpha f\|_{L^2(B)}\le C\|\calK_\alpha f\|_{L^q(B)}\le C \big\| \la  \mathbf x \ra^\alpha f \big\|_{L^2(\RR^2)},
    \]
    hence $\calK_\alpha: X_\alpha \to X_\alpha$ is a bounded operator.

    Next, we prove the compactness of $\calK_\alpha: X_\a \to X_\a$. Firstly, observe that there exists $\mathbf {v}_n \in C_c^\infty(\bar B)$ with $\text{supp}(\mathbf{v}_n)\subset B$, such that
    \[
    \calK_{\a,n} := \mathbf{v}_n \cdot \na \Delta^{-1}: X_\a \to X_\a \quad \text{ satisfying} \quad \| \calK_{\a,n} - \calK_\alpha\|_{X_\a \to X_\alpha} \to 0 \text{ as }  n \to \infty.
    \]
    By the closedness of the set of compact operators under the operator norm, it suffices to prove that $\calK_{\a,n}: X_\a \to X_\a$ is compact for each $n$. In fact, for any $\{ f_m \} \subset X_\alpha$ with $\sup_n \| \la \mathbf x \ra^\alpha f_m \|_{L^2(\RR^2)} \le 1$, similar to \eqref{HLS: weighted version}, we have
    \[
    \| \calK_{\a, n} f_m \|_{L^2(\RR^2)} \le C_n \| \la \mathbf x \ra^\alpha f_m \|_{L^2(\RR^2)} \le C_n, \text{ for all } n \in \ZZ_+,
    \]
    and by the boundedness of  $\nabla^{2}\Delta^{-1}:L^2(\RR^2)\to L^2(\RR^2)$ as a Calder\'on-Zygmund operator,
    \begin{align*}
    \| \calK_{\a, n} f_m \|_{\dot{H}^1(\RR^2)}
    &\le \|\nabla\mathbf{v}_n\|_{L^\infty(\RR^2)}\|\one_B\nabla\Delta^{-1}f\|_{L^2(\RR^2)}+\| \mathbf {v}_n \|_{L^\infty} \|\na^2 \Delta^{-1} f_m \|_{L^2(\RR^2)}\\
    &\le C_n \dr{\| \la \mathbf x \ra^\alpha f_m \|_{L^2(\RR^2)} + \|\na^2 \Delta^{-1} f_m \|_{L^2(\RR^2)}} \\
    &\le C_n, \text{ for all } n \in \ZZ_+.
    \end{align*}
    Noting that $\text{supp } (\calK_{\a,n} f_m) \subset B$ for each $m \ge 1$, using the Rellich-Kondrachov compactness theorem, $\{ \calK_{\a,n} f_m \}_m$ is precompact in $L^2(\RR^2)$. Additionally, since $\| \cdot \|_{X_\a} \simeq_\alpha \| \cdot \|_{L^2}$ for any $L^2$ function compactly supported on $B$, $\{ \calK_{\a,n} f_m \}_{m}$ is precompact in $X_\a$, and thus we have concluded the proof.
\end{proof}
Next, we study the spectrum of $J_\alpha$ on $X_\a$. 

\begin{lemma}[Spectrum of transport operator]\label{lemma: spectrum of the transport op on weighted L2 is on the imaginary axis}
Let $J_\a$ for $\a > 0$ defined as in \eqref{decomp: L}. 
Then $J_\a$ is a closed operator and $\sigma(J_\a) \subset i\RR$. 
\end{lemma}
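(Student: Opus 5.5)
The plan is to realize $J_\a$ as the generator of the transport group induced by the Lipschitz velocity field $\mathbf v:=\na^\perp\psilm$. Since $\mathbf v\in W^{1,\infty}(\RR^2)$ and $\na\cdot\mathbf v=0$, the flow $\Phi_t$ of $\dot X=\mathbf v(X)$ is a global bi-Lipschitz, measure-preserving homeomorphism. We define
\[
T(t)f:=f\circ\Phi_t .
\]
Formally $\pa_t T(t)f=\mathbf v\cdot\na (T(t)f)$, so $T(t)=e^{tJ_\a}$. The first step is to show that $\{T(t)\}_{t\in\RR}$ is a $C_0$-group on $X_\a$ with a growth bound that is subexponential, ideally $\|T(t)\|_{X_\a\to X_\a}\le C(1+|t|)^{\a}$. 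Since $|\mathbf v|\le C$ globally (indeed $\mathbf v\to -\mathbf e_1$-type uniform flow at infinity), we have $|\Phi_t(\mathbf x)-\mathbf x|\le C|t|$. This gives the weight comparison
\[
\la \Phi_{-t}(\mathbf y)\ra \le \la \mathbf y\ra + C|t| \le C(1+|t|)\la \mathbf y\ra ,
\]
and then, changing variables with $\det D\Phi_t=1$,
\[
\|T(t)f\|_{X_\a}^2=\int |f(\mathbf y)|^2\la\Phi_{-t}(\mathbf y)\ra^{2\a}\,d\mathbf y \le C(1+|t|)^{2\a}\|f\|_{X_\a}^2 .
\]
Strong continuity follows by density of $C_c^\infty$ together with this uniform local bound.

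The second step is to identify the generator $A$ of $T(t)$ with $J_\a$ on its maximal domain \eqref{Dalpha(J)}. The inclusion $D(A)\subset D(J_\a)$ with $A=J_\a$ there follows by testing against $C_c^\infty$ functions, using that the distributional derivative of $t\mapsto f\circ\Phi_t$ at $t=0$ is $\mathbf v\cdot\na f$. For the reverse inclusion, take $f\in D(J_\a)$ and $\Re\lambda$ large. Then $(\lambda-A)$ is surjective, so there is $g\in D(A)$ with $(\lambda-A)g=(\lambda-J_\a)f$. It remains to show $\lambda-J_\a$ is injective on $D(J_\a)$, so that $f=g$. This is where I expect the main obstacle: uniqueness of $X_\a$ distributional solutions of $\mathbf v\cdot\na u=\lambda u$ for $\Re\lambda\neq0$. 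I would argue exactly as in Lemma \ref{a general eigenproblem lemma}. Since $\mathbf v$ is Lipschitz, DiPerna–Lions renormalization (commutator lemma) gives $\mathbf v\cdot\na|u|^2=2\Re\lambda|u|^2$ in $\calD'(\RR^2)$. Testing with the cutoff $b_R$ and using $|\mathbf v|\le C$ and $|b_R'|\le C/R$ yields $\Re\lambda\|u\|_{L^2}^2=0$; no inner cutoff is needed on all of $\RR^2$. Closedness of $J_\a$ is then automatic, since generators are closed; alternatively it follows directly from the fact that distributional limits preserve the equation.

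The final step is the spectral conclusion. Hille–Yosida applied to the group bound gives $\sigma(J_\a)\subset\{|\Re\lambda|\le\omega\}$ for every $\omega>0$. More precisely, for $\Re\lambda>0$ the resolvent $\int_0^\infty e^{-\lambda t}T(t)\,dt$ converges absolutely because the growth is polynomial, and the symmetric formula with $T(-t)$ handles $\Re\lambda<0$. Hence $\sigma(J_\a)\subset i\RR$. If the polynomial bound fails in some detail, any subexponential bound $\|T(t)\|\le C_\ep e^{\ep|t|}$ for all $\ep>0$ suffices, and this already follows from $|\Phi_t(\mathbf x)-\mathbf x|\le C|t|$.
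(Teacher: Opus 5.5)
Your proof is correct, and its overall architecture matches the paper's. Both use the same measure-preserving flow $\Phi_t$ of $\mathbf v=\na^\perp\psilm$ and the same weight comparison $\la\Phi_{-t}(\mathbf y)\ra\le C\la t\ra\la\mathbf y\ra$, which gives $\|T(t)\|_{X_\a\to X_\a}\le C\la t\ra^{\a}$. Both prove $D(A)\subset D(J_\a)$ by the same testing argument, and both finish with the same two-sided Laplace transform yielding $\sigma(J_\a)\subset i\RR$.

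The one genuine difference is how you prove the reverse inclusion $D(J_\a)\subset D(A)$.

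\emph{The paper's route.} The paper verifies the Duhamel identity $T(t)f-f=\int_0^t T(s)g\,ds$ directly. It pairs with the transported test functions $\varphi\circ\Phi_{-t}$ and uses $\frac{d}{dt}\varphi(\Phi_{-t}(\mathbf y))=-\mathbf v(\mathbf y)\cdot\na(\varphi\circ\Phi_{-t})(\mathbf y)$. This is elementary and needs no commutator estimates. However, it tacitly extends the distributional identity $\mathbf v\cdot\na f=g$ to the merely $C^1_c$ test functions $\varphi\circ\Phi_{-t}$, and it differentiates in $t$ under the integral.

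\emph{Your route.} You use the abstract principle that an extension $J_\a\supset A$ of a generator coincides with it once $\lambda-J_\a$ is injective for a single $\lambda\in\rho(A)$. You obtain that injectivity from DiPerna--Lions renormalization together with the outer cutoff $b_R$. This is exactly the mechanism of Lemma~\ref{a general eigenproblem lemma}, now without the inner cutoff since there is no boundary.

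\emph{Trade-offs.} Your approach costs the commutator lemma. It is legitimate here: $\mathbf v\in W^{1,\infty}$ and $u\in L^2$, so the commutator error vanishes in $L^2_{\rm loc}$ and the product with $u_\epsilon$ vanishes in $L^1_{\rm loc}$. In exchange, it reduces the generator identification to a clean uniqueness statement for the stationary equation $\mathbf v\cdot\na u=\lambda u$. It also gives a by-product: $\mathbf v\cdot\na$ has no eigenvalues off $i\RR$ even in unweighted $L^2$, which fits how the paper already uses Lemma~\ref{a general eigenproblem lemma} in Section~\ref{sec 22}.
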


\begin{proof}
\noindent \textbf{Step 1. Global-in-time existence of measure-preserving flow map.} Since $\wl \in C^{0, 1}(\RR^2)$ and is supported in $B$ from its definition \eqref{Lamb dipole: vorticity}, we have $\mathbf b := \nabla^\perp \psilm = \nabla^\perp (\Delta_{\RR^2}^{-1}\wl + x_2) \in C^1(\RR^2)$ by standard elliptic regularity and $\nabla \cdot \mathbf b = 0$.

Let $\Phi_t$ is the flow map generated by the incompressible velocity fields $\mathbf{b}$, i.e.
\[
\frac{d}{dt} \Phi_t (\mathbf x) = \mathbf b (\Phi_t(\mathbf x)), \; \text{ with } \; \Phi_0(\mathbf x) = \mathbf x \in \RR^2.
\]
Then we have
\[
\frac{d}{dt}\langle \Phi_t(\mathbf x) \rangle
=
\frac{\Phi_t(\mathbf x) \cdot \mathbf b(\Phi_t(\mathbf x))}{\langle \mathbf \Phi_t(\mathbf x)\rangle},
\]
which, together with the fact that $\| \mathbf b \|_{L^\infty} < \infty$, induces that
\be
\la \Phi_t (\mathbf x) \ra \le \la \mathbf x \ra + C(\| \mathbf b \|_{L^\infty}) |t| \le C(\| \mathbf b \|_{L^\infty}) \la t \ra \la \mathbf x \ra, 
\label{growth rate}
\ee
so that we obtain a global-in-time flow map $\Phi_t$ induced by $\mathbf b$. Together with incompressibility of $\mathbf b$, the flow map $\Phi_t$ preserves the Lebesgue measure in the sense that
\be
\det D\Phi_t(\mathbf x)=1.
\label{measure preseving}
\ee

\mbox{}

\noindent \textbf{Step 2. $J_\a$ generates a continuous semigroup on $X_\alpha$.} For any $f \in X_\a$,
\[
U(t)f:=f\circ\Phi_t, \quad \text{ for all } t \in \RR,
\]
where $\Phi_t$ is the flow map generated by $\mathbf b$ mentioned in \textit{Step 1}. By changing variables $y=\Phi_t(\mathbf x)$, together with \eqref{measure preseving},
\[
\|U(t)f\|_{X_\alpha}^2
=
\int_{\mathbb R^2}
\langle \mathbf x \rangle^{2\alpha}
|f(\Phi_t(\mathbf x))|^2\,d \mathbf x
= \int_{\mathbb R^2}
\langle \Phi_{-t}(\mathbf y)\rangle^{2\alpha}
|f(\mathbf y)|^2\,d\mathbf y.
\]
Recalling \eqref{growth rate}, this yields that
\be
\| U(t) f \|_{X_\a}^2
= \int_{\mathbb R^2}
\langle \Phi_{-t}(\mathbf y)\rangle^{2\alpha}
|f(\mathbf y)|^2\,d\mathbf y
\le C \la t \ra^{2 \a} \la \mathbf y \ra^{2 \alpha} |f(\mathbf y)|^2 d \mathbf y
\le C \la t \ra^{2 \a} \| \la \cdot \ra^\alpha f \|_{L^2}^2,
\label{semigroup estimate}
\ee
and this means that $\{ U(t) \}_{t \ge0}$ is a bounded semigroup on $X_\alpha$. Additionally, by the density of $C_c^\infty(\mathbb R^2)$ in $X_\alpha$ and the local uniform boundedness of $U(t)$, it is a strongly continuous semigroup on $X_\alpha$. In particular, let $A$ be the generator of $\{U(t)\}_{t \ge 0}$:
\[
D(A)
=
\left\{
f\in X_\alpha:\;
\lim_{t\to0}\frac{U(t)f-f}{t}
\text{ exists in }X_\alpha
\right\}.
\]
We now show that $A$ agrees with the maximal distributional realization of $J_\a$ with domain $D(J_\a)$ determined in \eqref{Dalpha(J)}.

On the one hand, suppose $f\in D(A)$ and $Af=g$. For every $\varphi\in C_c^\infty(\mathbb R^2)$, recalling \eqref{measure preseving},
\begin{align*}
& \quad \int_{\mathbb R^2} g\varphi\,d\mathbf x
= \lim_{t\to0}
\int_{\mathbb R^2}
\frac{f(\Phi_t(\mathbf x))-f(\mathbf x)}{t}\varphi(\mathbf x)\,d\mathbf x
 \\
 &= \lim_{t\to0}
\int_{\mathbb R^2}
f(\mathbf y)
\frac{\varphi(\Phi_{-t}(\mathbf y))-\varphi(\mathbf y)}{t}
\,d\mathbf y 
 = \int_{\RR^2} f(\mathbf y) \left( - \mathbf b \cdot \na \varphi \right)(\mathbf y) d \mathbf y,
\end{align*}
and this implies that $\mathbf b\cdot\nabla f=g $ in $\mathcal D'(\mathbb R^2)$, so $f\in D(J_\a)$ and $J_\a f=Af$.

On the other hand, suppose $f\in D(J_\a)$ and set
\[
g:=J_\a f=\mathbf b\cdot\nabla f\in X_\alpha.
\]
We claim that the following holds distributionally:
\[
U(t)f-f
=
\int_0^t U(s)g\,ds.
\]
In fact, for any chosen $\varphi\in C_c^\infty(\mathbb R^2)$, reaclling \eqref{measure preseving},
\begin{align*}
\frac{d}{dt}
\int_{\mathbb R^2} U(t)f\,\varphi\,d\mathbf x
& =
\frac{d}{dt}
\int_{\mathbb R^2} f(\mathbf y)\varphi(\Phi_{-t}(\mathbf y))\,d\mathbf y 
 = \int_{\RR^2} f(\mathbf y) \left( - \mathbf b (\mathbf y) \cdot \na (\varphi \circ \Phi_{-t}) (\mathbf y) \right) d\mathbf y \\
& = \int_{\RR^2} g(\mathbf y) \varphi(\Phi_{-t}(\mathbf y) ) d \mathbf y
= \int_{\RR^2} g(\Phi_t(\mathbf x)) \varphi(\mathbf x) d \mathbf x = \int_{\RR^2} U(t) g \varphi \ d \mathbf x, 
\end{align*}
Using the identity
\[
\frac{d}{dt}\varphi(\Phi_{-t}(\mathbf y))
=
-\mathbf b(\mathbf y)\cdot\nabla\bigl(\varphi\circ\Phi_{-t}\bigr)(\mathbf y),
\]
and the distributional relation $\mathbf b\cdot\nabla f=g$, we get
\[
\frac{d}{dt}
\int_{\mathbb R^2} U(t)f\,\varphi\,d\mathbf x
=
\int_{\mathbb R^2}
g(\mathbf y)\varphi(\Phi_{-t}(\mathbf y))\,d\mathbf y
=
\int_{\mathbb R^2}
U(t)g\,\varphi\,d\mathbf x,
\]
and thus this implies that
\[
U(t)f-f
=
\int_0^t U(s)g\,ds, \quad \text{ in } \mathcal D'(\mathbb R^2).
\]
Additionally, since both sides belong to $X_\alpha$, the identity is still valid on $X_\alpha$. Dividing by $t$ and then strong continuity of $U(t)$, we obtain that the generator should be
\[
\frac{U(t)f-f}{t}
=
\frac1t\int_0^t U(s)g\,ds \to g \; \text{ as }  t \to 0, \text{ in  the sense of } X_\a.
\]
Consequently, we conclude that $f\in D(A)$ and $Af=g=J_\a f$. In summary, we have obtained that $A=J_\a$.

\mbox{}

\noindent \textbf{Step 3. Spectrum of $J_\a$.} It remains to prove the spectral inclusion. Recalling \eqref{semigroup estimate}, for every $\varepsilon>0$, there exists $C(\varepsilon)>0$ such that
\[
\|U(t)\|_{X_\alpha\to X_\alpha}
\leq C(\ep) e^{\varepsilon |t|}.
\]
For $\lambda \in \{z \in \CC: \operatorname{Re}z>\varepsilon \}$, the usual Laplace transform formula gives
\[
(\lambda-J_\a)^{-1}f
=
\int_0^\infty e^{-\lambda t}U(t)f\,dt,
\]
and thus $\lambda\in\rho(J_\a)$. Similarly, using the semigroup for negative times,
\[
\{z \in \CC: \operatorname{Re}z< -\varepsilon \} \in\rho(J_\a).
\]
With the arbitrariness of $\varepsilon>0$, this implies that $\sigma(J_\a)\subset i\mathbb R$.
\end{proof}

Next, we introduce the following general version of compact perturbation theory. 
\begin{lemma}[Spectrum of compactly-perturbed operator]\label{Lemma: spectrum of compactly-perturbed operator}
    If $J: D(J) \subset X \to X$ is a closed, densely defined operator on a Banach space $X$, and $K: X \to X$ is compact, then we have
    \[
    \Big(\sigma_c(J+K) \cup \sigma_r(J+K) \Big) \subset \sigma(J).
    \]
\end{lemma}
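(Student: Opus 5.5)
Fix $\lambda \in \sigma(J+K)\setminus\sigma(J)$. I will show that $\lambda \in \sigma_p(J+K)$; this forces $\lambda\notin\sigma_c(J+K)\cup\sigma_r(J+K)$, which is the claimed inclusion. The strategy is the standard Fredholm argument. Since $\lambda\in\rho(J)$, the resolvent $R:=(\lambda-J)^{-1}:X\to D(J)$ is bounded, and on $D(J+K)=D(J)$ we can factor
\[
\lambda-(J+K) = (\lambda - J) - K = \bigl(I - K R\bigr)(\lambda - J).
\]
Here $KR: X\to X$ is compact, being a compact operator composed with a bounded one. The operator $J+K$ is closed with domain $D(J)$, because $K$ is bounded and $J$ is closed, so the spectral classification makes sense.

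By the Riesz--Schauder (Fredholm alternative) theory for compact operators on Banach spaces, $I-KR$ is Fredholm of index zero. Hence exactly one of two cases holds. In the first case $I-KR$ is injective; then it is surjective with bounded inverse, and $\bigl(\lambda-(J+K)\bigr)^{-1} = R\,(I-KR)^{-1}$ is bounded, so $\lambda\in\rho(J+K)$, contradicting the choice of $\lambda$. In the second case $\ker(I-KR)\neq\{0\}$; take $0\neq v$ with $v=KRv$ and set $u:=Rv\in D(J)$. Then $u\neq0$, since otherwise $v=Ku=0$. Moreover $(\lambda-J)u=v=Ku$, so $(J+K)u=\lambda u$, and therefore $\lambda\in\sigma_p(J+K)$.

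Combining the two cases, $\sigma(J+K)\setminus\sigma(J)\subset\sigma_p(J+K)$. Since $\sigma_p$, $\sigma_c$ and $\sigma_r$ are disjoint by definition, it follows that $\sigma_c(J+K)\cup\sigma_r(J+K)\subset\sigma(J)$.

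The only delicate point is the factorization and the domain bookkeeping: one must check that $R$ maps onto $D(J)$, and that the inverse $R(I-KR)^{-1}$ is genuinely a two-sided inverse of $\lambda-(J+K)$ on $D(J)$. Both checks are routine. Dense definition of $J$ is not really needed beyond the standard setting.
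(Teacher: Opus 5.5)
Your proposal is correct and follows the same route as the paper: you use the factorization $\lambda-(J+K)=(I-K(\lambda-J)^{-1})(\lambda-J)$ and the Fredholm alternative for the compact perturbation $I-K(\lambda-J)^{-1}$. You also note, as the paper does, that a nontrivial kernel element produces an eigenvector of $J+K$. You argue directly rather than by contradiction, and you additionally verify that $u\neq 0$, a detail the paper leaves implicit; otherwise the argument is identical.
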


\begin{proof}
    We prove by contradiction. Assuming there exists $\lambda \in \Big(\sigma_c(J+K) \cup \sigma_r(J+K) \Big) \cap \rho(J)$, we know that $(\lambda I - J)^{-1}: X \to X$ is a bounded linear operator, so that $K(\lambda I -J)^{-1} : X \to X$ is compact. Then from the decomposition
    \[
    \lambda I - (J+K) = \lambda I - J - K = \left( I - K(\lambda I -J)^{-1} \right) \left( \lambda I -J \right),
    \]
    we claim that $I - K(\lambda I -J)^{-1}$ is invertible, then we know that $\lambda I - (J+K): X \to X$ is invertible, so that $\lambda \not\in \sigma_c(J+K) \cup \sigma_r(J+K)$.
    
     To prove the claim, by the Fredholm alternative, it is equivalent to showing that
    \[
    \ker \left(  I - K(\lambda I -J)^{-1} \right) = \{ 0 \}.
    \]
    In fact, if there exists $0 \not= g \in \ker \left(  I - K(\lambda I -J)^{-1} \right) $, then
    \[
    g - K(\lambda I -J)^{-1} g =0, \quad \Rightarrow \quad (J+K) u = \lambda u, \text{ with }  u = (\lambda I - J)^{-1} g \in X,
    \]
    so that $\lambda \in \sigma_p(J+K)$, which contradicts $\l \in \sigma_c(J+K) \cup \sigma_r(J+K)$.
\end{proof}

With the preliminaries given by Lemma \ref{Lemma: compactness of K}, Lemma \ref{lemma: spectrum of the transport op on weighted L2 is on the imaginary axis} and Lemma \ref{Lemma: spectrum of compactly-perturbed operator}, we are ready to finish the proof of Theorem \ref{main thm: linear} (\romannumeral2).
\begin{proof}[Proof of Theorem \ref{main thm: linear} (\romannumeral2)]
As for $J_\a: D(J_\a) \subset X_\a \to X_\a$ given in \eqref{decomp: L}, since it satisfies the conditions of Lemma \ref{lemma: spectrum of the transport op on weighted L2 is on the imaginary axis}, $J_\a: D(J_\a) \subset X_\a \to X_\a$ is a closed, densely defined operator on $X_\a$, and $\sigma(J_\a) \subset i \RR$. Then combining Lemma \ref{Lemma: compactness of K}, Lemma \ref{Lemma: spectrum of compactly-perturbed operator} and the decomposition of $\calL_\a$ \eqref{decomp: L}, we conclude that
\[
\left( \sigma_{c} (\calL_\a) \cup \sigma_r(\calL_\a) \right) \subset \sigma(J_\a) \subset i \RR,
\]
and it suffices to study the $\sigma_p(\calL_\a)$. Since $X_\alpha\subset L^2(\RR^2)$ by definition, applying Theorem \ref{main thm: linear} (\romannumeral1), we have
    \[
       \Big( \sigma_p(\calL_\alpha)\cap \{\lambda\in\mathbb C:\Re\lambda<0\} \Big) 
    \subset \Big( \sigma_p \left(\calL \right )\cap \{\lambda\in\mathbb C:\Re\lambda<0\} \Big) =\varnothing.
    \]
   In conclusion, we obtain the spectral stability of $\calL$ in $X_\a$ in the sense that
   \[
   \sigma(\calL_\a) \cap \big\{ \lambda \in \CC: \Re \lambda <0 \big\} = \emptyset,
   \]
   and thus we have finished the proof of Theorem \ref{main thm: linear} (ii).
\end{proof}

\section{Quantitative orbital stability of Lamb-Chaplygin dipole}
\label{sec3}

This section is devoted to the quantitative orbital stability of Lamb-Chaplygin in the class $\calX_{\text{odd},+}$ \eqref{Xodd+}. First, note that for any $\omega$ satisfying the odd symmetry assumption with respect to $x$-axis\footnote{In later discussion in the section, out of the simplification of notation, we denote $L^2{(\RR_+^2)}$ the collection of all $L^2(\RR^2)$ functions that are all odd symmetric with respect to the $x$-axis.}, the dynamical information can be reduced to the upper half-plane $\RR_+^2$, and the kinetic energy, enstrophy and the impulse can be respectively rewritten as
\begin{align*}
      E[\omega] &= -\frac{1}{2} \int_{\RR_+^2} \omega(\mathbf x) \psi(\mathbf x) d \mathbf x = - \frac{1}{2} \int_{\RR_+^2} \omega(\mathbf x) \left( \Delta_{\RR_+^2}^{-1} \omega \right)(\mathbf x)  d \mathbf x \\
     & = - \frac{1}{8 \pi} \iint_{\RR_+^2 \times \RR_+^2} \log \left( 1+ \frac{4 x_2 y_2}{|\mathbf x - \mathbf y|^2} \right) \omega (\mathbf x) \omega(\mathbf y) d \mathbf x d \mathbf y, \\
      K[\omega] &= \| \omega \|_{L^2(\RR_+^2)}^2, \\
     I[\omega] &= \int_{\RR_+^2} x_2 \omega(\mathbf x) d \mathbf x.
\end{align*}
Then in the class $\calX_{\text{odd},+}$, we define the Lagrangian functional as 
    \be
\calF[\omega] = \frac{1}{2 c_L^2}\| \omega \|_{L^2(\RR_+^2)}^2 -  E[\omega] + \int_{\RR_+^2} x_2 \omega(\mathbf x) d\mathbf x.
\label{functional: def}
\ee
Via direct calculation, the first-order variation and second-order variation of $\calF$ at $\omega = \wl$ can be respectively written as
\begin{align}
\Big\la \frac{\delta \calF}{\delta \omega}, h \Big\ra &= \frac{1}{c_L^2} \int_{\RR_+^2} \errl (\mathbf x) h (\mathbf x) d \mathbf x, 
\label{first variation} \\
\Big\la \frac{\delta^2 \calF}{\delta^2 \omega} h, h  \Big\ra &= 2 \left(\calS h, h \right), \quad \calS := \frac{1}{2c_L^2} I - \frac{1}{2} \left( -\Delta_{\RR_+^2} \right)^{-1},
\label{second variation}
\end{align}
where $\errl$ is defined from \eqref{error: def}.
In particular, we denote $\calQ[h]$ as the difference functional
\be
\calQ[h] := \calF[\wl + h] - \calF[\wl] = \frac{1}{c_L^2} \int_{\RR_+^2} \errl(\mathbf x) h(\mathbf x) d \mathbf x + \left( \calS h, h \right).
\label{def: Q}
\ee

In Section \ref{sec 32} and Section \ref{sec 33}, we will prove the coercivity of $\calQ$ restricted to a certain admissible set. Afterwards, we introduce modulation analysis and the full Lyapunov functional in Section \ref{sec 34} to conclude the quantitative orbital stability Theorem \ref{thm: orbital stability sec3}.

\vspace{0.1cm}

\subsection{Coercivity of $\calQ$ in interior region}
\label{sec 32}

In this subsection, we analyze the coercivity of the following linear operator 
\be
\tilde \calS = \frac{1}{2c_L^2}Id - \frac{1}{2} \mathbf{1}_B (-\Delta_{\RR_+^2})^{-1} \mathbf{1}_B.
\label{linear op: cutoff version}
\ee
The corresponding quadratic form matches with $\calQ$ restricted in the disk: 
\[ \calQ[h] = (\tilde \calS h, h)\quad {\rm if}\,\,\,\,\supp h \subset B. \]

Notice that $\tilde \calS$ resembles $\tilde L$ \eqref{tilde calL}. By mimicking the proof of Lemma \ref{lemma: n-(L)}, we can characterize its center-unstable eigen directions and obtain a coercivity estimate from the spectral gap. 

\begin{lemma}[Spectrum of $\tilde \calS$ on $L^2(\RR_+^2)$]
\label{lem: tilde S spectrum gap}
    As for $\tilde\calS: L^2(\RR_+^2) \to L^2(\RR_+^2)$ defined in \eqref{linear op: cutoff version}, we have 
    \be  \sigma \left( \tilde \calS\big|_{L^2(\RR^2_+)} \right) \cap (-\infty, 0] = \{ \l_1, 0\},\quad \text{ where }\,\, \lambda_1 = \frac{1}{2c_L^2}\left( 1-\frac{\mu_{1,1}^2}{\mu_{0,1}^2}  \right) <0,  \label{smallest eigenvalue} \ee
    and both eigenvalues are simple. More specifically,
\be
\ker\left(\tilde \calS \big|_{L^2(\RR^2_+)}- \l_1\right) = {\rm span} \{g_1\}, \quad \ker\left(\tilde \calS \big|_{L^2(\RR^2_+)}\right) = {\rm span} \{\pa_{x_1} \wl\}, 
\label{smallest eigenfunction}
\ee
where 
\be 
g_1(\mathbf x) =  \mathbf{1}_B(\mathbf x) J_{1}(\mu_{0,1} r) \sin \theta. 
\ee 
Moreover, we have the spectral gap estimate: there exists $c_0 > 0$ such that 
\be
(\tilde \calS  f,f )_{L^2(\RR_+^2)} \ge c_0 \|f\|_{L^2(\RR_+^2)}^2,\quad  \text{ for any } f \perp \partial_{x_1} \wl, g_1.
\label{coercivity: original version}
\ee
\end{lemma}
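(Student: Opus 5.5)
Since functions in $L^2(\RR^2_+)$ are, by the convention of this section, odd in $x_2$, I will work on $L^2(\RR^2)$ restricted to the odd subspace. There the half-plane Green's function is the whole-plane one: $(-\Delta_{\RR^2_+})^{-1}f=(-\Delta)^{-1}f$ for odd $f$. Hence on this subspace
\[
\tilde\calS=\frac{1}{2c_L^2}\bigl(I-c_L^2\mathbf 1_B(-\Delta)^{-1}\mathbf 1_B\bigr)=\frac{1}{2c_L^2}\bigl(\tilde L+\mathbf 1_{B^c}\bigr),
\]
with $\tilde L$ from \eqref{tilde calL}. The operator $\mathbf 1_B(-\Delta)^{-1}\mathbf 1_B$ is Hilbert--Schmidt by \eqref{Hilbert-Schmidt of tilde L}, so $\tilde\calS$ is a compact perturbation of $\frac{1}{2c_L^2}I$. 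Its spectrum in $(-\infty,\frac{1}{4c_L^2})$ is therefore discrete, consisting of eigenvalues of finite multiplicity. It suffices to find all eigenvalues $\lambda\le 0$ with odd eigenfunctions; the gap estimate \eqref{coercivity: original version} then follows from the spectral theorem. Let $c_0>0$ be the smallest positive spectral point, which exists because the spectrum near $0$ is discrete and bounded away from the essential spectrum at $\frac{1}{2c_L^2}$. Then $(\tilde\calS f,f)\ge c_0\|f\|^2$ on the orthogonal complement of the eigenspaces for $\lambda_1$ and $0$.

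For the eigenvalue problem, let $\tilde\calS f=\lambda f$ with $\lambda\le0$. On $B^c$ this gives $\frac{1}{2c_L^2}f=\lambda f$, so $f=0$ there and $\supp f\subset B$. I then follow the proof of Lemma \ref{lemma: n-(L)} verbatim, now allowing $\lambda=0$. Setting $\psi=-(-\Delta)^{-1}f$, with $\kappa^2=c_L^2/(1-2c_L^2\lambda)$, I get $\Delta\psi+\kappa^2\psi=0$ in $B$ and $\Delta\psi=0$ in $B^c$, with $\psi$ a Newtonian potential. Odd symmetry kills all $\cos(m\theta)$ modes and the radial $m=0$ mode, leaving only $\sin(m\theta)$ with $m\ge1$. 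The $C^1$ matching at $r=1$ forces $J_{m-1}(\kappa)=0$, i.e.\ $\kappa=\mu_{m-1,j}$. The condition $\lambda\le0$ means $\kappa\le c_L=\mu_{1,1}$. For $m\ge1$, the zeros of $J_{m-1}$ not exceeding $\mu_{1,1}$ are only $\mu_{0,1}<\mu_{1,1}$ (for $m=1$) and $\mu_{1,1}$ itself (for $m=2$). Here I use interlacing $\mu_{0,1}<\mu_{1,1}<\mu_{0,2}$ and $\mu_{m-1,1}>\mu_{1,1}$ for $m\ge3$, which holds since the first zeros of $J_\nu$ increase with $\nu$.

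The case $\kappa=\mu_{0,1}$, $m=1$ gives $\lambda=\lambda_1$ with eigenfunction $f=\kappa^2\psi\mathbf 1_B\propto g_1$. The case $\kappa=\mu_{1,1}$, $m=2$ gives $\lambda=0$ with eigenfunction $\mathbf 1_BJ_2(c_Lr)\sin2\theta$. I then check that this is proportional to $\pa_{x_1}\wl$, using $\pa_{x_1}(J_1(c_Lr)\sin\theta)=\frac{c_L}{2}J_2(c_Lr)\sin 2\theta$ up to sign; this follows from the Bessel recurrences for $J_1'\pm J_1/r$. It is also consistent with $\tilde L\pa_{x_1}\wl=0$ from the remark after Lemma \ref{lemma: non-positive directions of the bilinear form of L}. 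Each value of $\lambda$ comes from a single $(m,j)$ and a single angular function, so both eigenvalues are simple. Moreover $\lambda_1=\frac{1}{2c_L^2}(1-\mu_{1,1}^2/\mu_{0,1}^2)<0$.

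I expect the main obstacle to be the completeness and rigor of the Bessel-zero bookkeeping: excluding the radial mode and the $\cos$ modes by oddness, making sure no other zero $\mu_{m-1,j}\le\mu_{1,1}$ exists, and justifying the $C^1$ matching and the vanishing of growing exterior modes. For the last two points I would reuse the pointwise estimates \eqref{eigenfunction: boundedness} and \eqref{f: pointwise estimate 2} as in Lemma \ref{lemma: n-(L)}. The coercivity constant $c_0$ is then simply the spectral gap, which comes for free from discreteness.
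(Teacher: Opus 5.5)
Your proposal is correct and follows essentially the same route as the paper: compactness of the Hilbert--Schmidt part gives discreteness of the spectrum away from $\frac{1}{2c_L^2}$, any eigenfunction with $\lambda\le 0$ is supported in $B$, and the Bessel-mode matching of Lemma \ref{lemma: n-(L)} leaves only $(m,j)=(1,1)$ and $(2,1)$. You also spell out two points the paper only sketches, and both are right: on odd functions $(-\Delta_{\RR^2_+})^{-1}$ coincides with $(-\Delta)^{-1}$, so only the $\sin m\theta$ modes survive; and $\mathbf 1_B J_2(c_L r)\sin 2\theta$ is proportional to $\partial_{x_1}\wl$ by $J_1'(s)-J_1(s)/s=-J_2(s)$.
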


\begin{proof}
    The proof of Lemma \ref{lem: tilde S spectrum gap} is very similar to Lemma \ref{lemma: n-(L)}. But out of completeness, let us sketch the proof. Firstly, it is easy to check that $\tilde \calS$ is invariant and self-adjoint on $L^2(\RR_+^2)$. Next, $\mathbf{1}_B (-\Delta_{\RR_+^2})^{-1} \mathbf{1}_B$ is non-negative and Hilbert-Schmidt operator in view of \eqref{Hilbert-Schmidt of tilde L} and hence compact on $L^2(\RR_+^2)$ \cite[Section 6.8.5]{helffer2013spectral} and \cite[Theorem 9.4]{zbMATH00837307}, and thus all elements in $\sigma\left( \tilde \calS \right) \setminus \big\{ \frac{1}{2c_L^2} \big\}$ are discrete \cite[Theorem 6.8]{MR2759829}. In particular, 
 \[\sigma(\tilde \calS) \cap \{ \lambda: \text{Re } \lambda \le 0 \} =\sigma_{\text{disc}}(\tilde \calS) \cap \{ \lambda: \text{Re } \lambda \le 0  \}.
 \]Then the spectral gap property of $\tilde \calS$ given in \eqref{coercivity: original version} is a result of \eqref{smallest eigenvalue} and \eqref{smallest eigenfunction}.

 To study \eqref{smallest eigenvalue} and \eqref{smallest eigenfunction}, we consider the eigen equation $\tilde \calS g = \lambda g$ with $\lambda \le 0$, then we see that $g$ solves
 \[
 -\mathbf{1}_B (-\Delta_{\RR_+^2})^{-1} \mathbf{1}_B g = \left( 2\lambda - \frac{1}{c_L^2} \right) g, \quad \Rightarrow \quad \text{supp }g \subset B.
 \]
 Then with a similar argument as the proof of Lemma \ref{lemma: n-(L)}, we obtain that the related eigenvalues in $L^2(\RR_+^2)$ should be
 \[
 \lambda_{m,j} = \frac{1}{2c_L^2} \left( 1 - \frac{\mu_{1,1}^2}{\mu_{m-1,j}^2} \right), \text{ with  the eigenfunction } g_{m,j}(\mathbf x) = \mathbf{1}_B (\mathbf x) J_m\left( \mu_{m-1,j} r \right) \sin m \theta.
 \]
 for any $m,j \in \ZZ_{+}$. In particular, the only scenario leading to the negativity and vanishing of $\lambda_{m,j}$ is $(m,j)=(1,1)$ and $(2,1)$ respectively. Thus, we have finished the proof.
\end{proof}

\begin{proposition}[Coercivity of $\tilde \calS$ \eqref{linear op: cutoff version}]
\label{prop: calS coercivity}
     As for $\tilde \calS: L^2(\RR_+^2) \to L^2(\RR_+^2)$ defined in \eqref{linear op: cutoff version}, there is a uniform constant $a>0$ such that
     \begin{align}
          \left( \tilde \calS h, h \right)_{L^2(\RR_+^2)} \ge a \| h \|_{L^2(\RR_+^2)}^2,  
        \quad & \text{ for all } h \in L^2(\RR_+^2), \;h \perp_{L^2(\RR_+^2)} \mathbf{1}_B x_2, \notag \\
        & \quad \text{ and } h \perp_{L^2(\RR_+^2)} \partial_{x_1} \wl.
        \label{spectral gap: version 1}
     \end{align}
\end{proposition}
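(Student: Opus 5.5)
Since Lemma \ref{lem: tilde S spectrum gap} gives exactly one negative direction $g_1$ and kernel $\partial_{x_1}\wl$, the plan is the Weinstein-type argument: replace the constraint $h\perp g_1$ by $h\perp \mathbf{1}_B x_2$, using the identity
\[
\tilde\calS\wl=-\tfrac12\mathbf{1}_B x_2
\]
(the upper-half-plane analogue of Lemma \ref{lemma: non-positive directions of the bilinear form of L}, obtained from $\wl=-c_L^2\psilm$ in $B$ and $\psilm=\psil+x_2$). The key quantity is
\[
(\tilde\calS\wl,\wl)=-\tfrac12(\mathbf{1}_Bx_2,\wl)_{L^2(\RR^2_+)}<0,
\]
which holds because $\wl>0$ on $B\cap\RR^2_+$.

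\emph{Step 1 (non-negativity on the constraint set).} Let $h\perp \mathbf{1}_Bx_2$ and $h\perp\partial_{x_1}\wl$, and write $h=\alpha g_1+p$ with $p\perp g_1,\partial_{x_1}\wl$. Decompose $\wl=\beta g_1+q$ in the same way; note that $\wl\perp\partial_{x_1}\wl$ by parity in $x_1$. Since $(\tilde\calS \wl,\wl)<0$ and $(\tilde\calS q,q)\ge c_0\|q\|^2$, necessarily $\beta\neq0$. The orthogonality $h\perp\mathbf{1}_Bx_2=-2\tilde\calS\wl$ gives $(\tilde\calS h,\wl)=0$, i.e. $\alpha\beta\lambda_1+(\tilde\calS p,q)=0$. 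Then Cauchy--Schwarz for the positive form $(\tilde\calS\cdot,\cdot)$ on $\{g_1,\partial_{x_1}\wl\}^\perp$ yields
\[
(\tilde\calS h,h)=\lambda_1\alpha^2+(\tilde\calS p,p)\ge \lambda_1\alpha^2+\frac{\alpha^2\beta^2\lambda_1^2}{(\tilde\calS q,q)}
=\frac{\alpha^2\lambda_1}{(\tilde\calS q,q)}\big(\tilde\calS \wl,\wl\big)\ge0 .
\]
Here we used $(\tilde\calS\wl,\wl)=\lambda_1\beta^2+(\tilde\calS q,q)$; the last expression is non-negative since $\lambda_1<0$ and $(\tilde\calS\wl,\wl)<0$. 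Moreover, equality forces $\alpha=0$ and then $p=0$ by \eqref{coercivity: original version}. So the form is strictly positive on the constraint set.

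\emph{Step 2 (upgrade to a uniform constant).} This is the main obstacle: passing from strict positivity to the quantitative bound $a\|h\|^2$. I would use the standard contradiction/compactness argument. Let $a_*=\inf(\tilde\calS h,h)$ over admissible $h$ with $\|h\|=1$, and suppose $a_*=0$. Take a minimizing sequence $h_n\rightharpoonup h_*$ weakly. Since $\tilde\calS=\frac1{2c_L^2}I-K$ with $K$ compact (Hilbert--Schmidt, as in Lemma \ref{lem: tilde S spectrum gap}), we have $(Kh_n,h_n)\to(Kh_*,h_*)$, and the constraints pass to the weak limit. Then
\[
0=\lim(\tilde\calS h_n,h_n)\ge \tfrac1{2c_L^2}\|h_*\|^2+\tfrac1{2c_L^2}(1-\|h_*\|^2)-(Kh_*,h_*)\ge(\tilde\calS h_*,h_*)+\tfrac1{2c_L^2}(1-\|h_*\|^2).
\]
If $h_*=0$ the right side equals $\frac1{2c_L^2}>0$, a contradiction. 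Otherwise Step 1 gives $(\tilde\calS h_*,h_*)>0$, again a contradiction. Hence $a_*>0$.

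Alternatively, one could obtain an explicit bound via Lagrange multipliers: the minimizer solves $\tilde\calS h=a h+\mu\mathbf{1}_Bx_2+\nu\partial_{x_1}\wl$, and one shows the resulting dispersion function is nonzero on $[0,c_0)$. The compactness route, however, is shorter. The only delicate check is that $(\mathbf{1}_Bx_2,\wl)>0$, which is immediate from the sign of $\wl$.
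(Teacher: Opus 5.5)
Your proof is correct, but its key step differs from the paper's. The paper argues by contradiction. Assuming $a\le 0$, it first obtains a normalized constrained minimizer $h_*$ by compactness. It then writes the Euler--Lagrange equation $\tilde\calS h_* = a h_* + b\,\mathbf{1}_B x_2 + c\,\partial_{x_1}\wl$ and rules out three cases separately:
\begin{itemize}
\item[(1)] $a=0$, using $\tilde\calS\wl=-\tfrac12\mathbf{1}_Bx_2$ and $I[\wl]>0$;
\item[(2)] $a\in(\lambda_1,0)$, using monotonicity of $F(t)=((\tilde\calS-t)^{-1}\mathbf{1}_Bx_2,\mathbf{1}_Bx_2)$ together with $F(0)<0$;
\item[(3)] $a=\lambda_1$, using the min-max characterization of $g_1$ and an explicit Bessel integral showing $(g_1,\mathbf{1}_Bx_2)\neq 0$.
\end{itemize}

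You instead prove strict positivity on the constraint set directly. You decompose along the single negative eigenvector $g_1$ and apply Cauchy--Schwarz to the positive form on $\{g_1,\partial_{x_1}\wl\}^\perp$. You use compactness only afterwards, and at that stage no minimizer and no case analysis are needed. The weak limit either vanishes or is an admissible nonzero element, and both alternatives contradict $a_*=0$.

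Your route is purely algebraic and never inverts $\tilde\calS-t$. So the fact that $0$ is an eigenvalue of $\tilde\calS$ never arises; the paper's $F(0)$ implicitly handles it by working on $(\partial_{x_1}\wl)^\perp$. Your route also removes the need for the endpoint Bessel computation. Since $(g_1,\mathbf{1}_Bx_2)=-2\lambda_1(g_1,\wl)$, that nondegeneracy is equivalent to your $\beta\neq 0$, which you get for free from $(\tilde\calS\wl,\wl)<0$. The paper's Lagrange-multiplier route is the more systematic template: it characterizes the constrained minimizer and reduces coercivity to the sign of a scalar function of the spectral parameter. For a single negative direction, however, your argument is the more economical one.

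One small point: you divide by $(\tilde\calS q,q)$, so you should note that $q\neq 0$. This holds because $\wl$ is not a multiple of $g_1$ ($c_L=\mu_{1,1}\neq\mu_{0,1}$). Alternatively, if $q=0$, the constraint gives $\alpha\beta\lambda_1=0$, hence $\alpha=0$, and the bound follows directly from the spectral gap.
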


\begin{remark}
While the orthogonality conditions in \eqref{coercivity: original version} seem also sufficient for the subsequent modulation argument, this new set of orthogonal conditions is simpler and more natural, and provides additional flexibility to future application of this coercivity result. 
\end{remark}

\begin{proof}[Proof of Proposition \ref{prop: calS coercivity}] Let us introduce the variational problem
\begin{align}
    a := \inf_{h \in \calB} \left( \tilde \calS h, h \right)_{L^2(\RR_+^2)}, \quad  
    & \text{ with } \calB := \Big\{ h \in L^2(\RR_+^2): h \perp_{L^2(\RR_+^2)} \mathbf{1}_B x_2, \notag  \\
    & \qquad \qquad \qquad h \perp_{L^2(\RR_+^2)} \partial_{x_1} \wl \text{ and } \| h\|_{{L^2(\RR_+^2)}} =1 \Big\}.
\label{variational: min-max}
\end{align}
We will prove $a > 0$ by contradiction. From now on, we suppose $a \le 0$ and will derive a contradiction. From \eqref{smallest eigenvalue}, we have $a \ge \l_1$.

\mbox{}

\noindent \textbf{Step 1. Existence of minimizer.}
We firstly claim there exists a minimizer $h_* \in \calB$ such that $a = (\tilde \calS h_*, h_*)_{L^2(\RR^2_+)} $. 

Indeed, pick a minimizing sequence $\{ h_n \} \subset \calB$ such that
\[
\left( \tilde \calS h_n, h_n \right) \to a,\quad  \text{ as } n \to \infty.
\]
Note that $\| h_n \|_{L^2(\RR_+^2)} =1$ for all $n \ge 1$, there exists a subsequence (we still denote it by $\{ h_n\}$) such that $h_n \rightharpoonup h_*$ weakly in $L^2(\RR_+^2)$ for some $h_* \in L^2(\RR_+^2)$. In particular, $h_*$ inherits the othogonality conditions $h_* \perp_{L^2(\RR_+^2)} \mathbf{1}_B x_2,  \partial_{x_1} \wl$, and satisfies $\| h_* \|_{L^2(\RR^2_+)} \le 1$. Note that $\mathbf{1}_B (-\Delta)^{-1} \mathbf{1}_B: L^2(\RR_+^2) \to L^2(\RR_+^2)$ is a compact operator, this yields that
\[
\mathbf{1}_B (-\Delta)^{-1} \mathbf{1}_B h_n \to \mathbf{1}_B (-\Delta)^{-1} \mathbf{1}_B h_* \quad \text{ strongly in } L^2(\RR_+^2) \text{ as } n \to \infty,
\]
and thus 
\begin{align*}  (\mathbf{1}_B (-\Delta)^{-1} \mathbf{1}_B h_*, h_*)_{L^2(\RR^2_+)} &= \lim_{n \to \infty} (\mathbf{1}_B (-\Delta)^{-1} \mathbf{1}_B h_n, h_n)_{L^2(\RR^2_+)} \\
&= \lim_{n \to \infty} \left( \frac{1}{c_L^2} \| h_n \|_{L^2}^2 - 2(\tilde S h_n, h_n) \right)_{L^2(\RR^2_+)}  = \frac{1}{c_L^2} - 2a \ge \frac{1}{c_L^2},
\end{align*}
where we exploited the contradiction assumption $a\le 0$. This implies that $h_* \neq 0$.

Next, by semi-lower continuity of weak convergence \cite[Proposition 3.5]{MR2759829}, we obtain that
\be
\left( \tilde{\calS} h_*, h_* \right)_{L^2(\RR_+^2)}
\le \liminf_{n \to \infty} \left( \tilde{\calS} h_n, h_n \right)_{L^2(\RR_+^2)} = a. \label{eqcontradiction1}
\ee
Moreover, by the definition of $a$ \eqref{variational: min-max}, we have
\be \| h_* \|_{L^2(\RR_+^2)}^{-2} \left( \tilde{\calS} h_*, h_* \right)_{L^2(\RR_+^2)} \ge a. \label{eqcontradiction2} \ee 
Then we can conclude the claim by the following discussion:

(1) If $a = 0$, these inequalities implies $\left( \tilde{\calS} h_*, h_* \right)_{L^2(\RR_+^2)} = 0$, so we can replace $h_*$ by $\tilde h_* = h_* / \| h_* \|_{L^2(\RR_+^2)} \in \calB$ which still satisfies $\left( \tilde{\calS} \tilde h_*, \tilde h_* \right)_{L^2(\RR_+^2)} = 0$.

(2) if $a < 0$, these inequalities indicates that $\| h_* \|_{L^2(\RR_+^2)}^{-2} \le 1$. Combined with $\| h_*\|_{L^2(\RR_+^2)} \le 1$, we obtain $\| h_* \|_{L^2(\RR_+^2)} = 1$ and thus $h_* \in \calB$. Then the inequalities in \eqref{eqcontradiction1}-\eqref{eqcontradiction2} becomes equalities, leading to $\left( \tilde{\calS} h_*, h_* \right)_{L^2(\RR_+^2)} = a$.

\mbox{}

\noindent \textbf{Step 2. End of proof.}
We can write the Euler-Lagrange equation for the minimizer $h_* \in \calB$ as
\[ 
\tilde \calS h_* = a h_* + b \mathbf{1}_B x_2 + c \partial_{x_1} \wl,
\]
where $a \le 0$ is as \eqref{variational: min-max}. Taking $L^2$ inner product with $\partial_{x_1} \wl$ onto both sides, with $h_* \in \calB$, we obtain that $c=0$ and hence
\[
\tilde \calS h_* = a h_* + b \mathbf{1}_B x_2.
\]
Then we will derive a contradiction from this identity by discussing the following three cases. That concludes the proof of this proposition.

\noindent (1) If we assume that $a =0$, then $\tilde \calS h_* = b \mathbf{1}_B x_2$. Recall \eqref{error: def}, we notice that
\begin{align}
2c_L^2 \tilde \calS \wl 
& = \wl + c_L^2 \mathbf{1}_B \psil = \mathbf{1}_B \left( \wl + c_L^2  \psil \right) \notag\\
& = \mathbf{1}_B \left( \errl - c_L^2 x_2 \right) = -c_L^2 \mathbf{1}_B x_2,
\label{F(0)} 
\end{align}
and it implies that $h_* = - b \left( \frac{1}{2} \wl + d_1 \partial_{x_1} \wl \right) $ for some $d_1 \in \RR$. Thus by \eqref{energy+impulse+enstrophy of lamb dipole},
\[
0 = b\left( h_*, \mathbf{1}_B x_2 \right)_{L^2(\RR_+^2)} =  - \frac{b}{2}\left( \mathbf{1}_B x_2 , \wl \right)_{L^2(\RR_+^2)} = - \frac{b}{2} I[\wl], \quad \Rightarrow \quad b =0.
\]
Hence $\tilde \calS h_* = 0$, and thus $h_* = c' \partial_{x_1} \wl$ for some $c' \not= 0$, which contradicts $h_* \perp \partial_{x_1} \wl$.

\noindent (2) If we assume that $a \in  \left( \lambda_1,0 \right) \subset \rho(\tilde \calS)$,
\[
h_* = b \left( \tilde \calS -a \right)^{-1} \mathbf{1}_B x_2, \quad \Rightarrow \quad 0 = (h_*, \mathbf{1}_B x_2)_{L^2(\RR_+^2)} = b \left( \left( \tilde \calS -a \right)^{-1} \mathbf{1}_B x_2, \mathbf{1}_B x_2\right)_{L^2(\RR_+^2)}.
\]
We define for $t \in (\l_1, 0)$ that 
\[
F(t):= \left( \left( \tilde \calS - t \right)^{-1} \mathbf{1}_B x_2, \mathbf{1}_B x_2\right)_{L^2(\RR_+^2)} \quad \Rightarrow \quad F'(t) = \Big\| \left( \tilde \calS -t \right)^{-1} \mathbf{1}_B x_2 \Big\|_{L^2(\RR_+^2)}^2 \ge 0.
\]
Recalling \eqref{F(0)}, we find that $F(0)= - \frac{1}{2} \left( \mathbf{1}_B x_2, \wl \right)= - \frac{1}{2} I[\wl]<0$, and integrating $F'(t)$ on $t \in [a, 0]$ yields $F(a) <0$. This implies that $b=0$ and hence $\tilde \calS h_* = a h_*$, contradicting to $a \in \left( \lambda_1, 0 \right) \in \rho(\tilde \calS)$.

\noindent (3) If we assume that $a = \lambda_1$, by min-max principle, $h_*$ should be the unique minimizer of the variational problem
\[
\inf_{\| f \|_{L^2(\RR_+^2)}=1, \text{ $f$ is odd in $x$-axis}} \left( \tilde \calS f, f \right)_{L^2(\RR_+^2)},
\]
thus $h_* =c'' \mathbf{1}_B J_{1} \left( \mu_{0,1} r \right) \sin \theta$ for some $c'' \not= 0$. However, by using Bessel identity \eqref{Bessel identity},
\begin{align*}
    & \quad  \left( \mathbf{1}_B J_{1} \left( \mu_{0,1} r \right) \sin \theta, \mathbf{1}_B x_2 \right)_{L^2(\RR_+^2)} 
     = \left( \mathbf{1}_B J_{1} \left( \mu_{0,1} r \right) \sin \theta, \mathbf{1}_B r \sin \theta \right)_{L^2(\RR_+^2)} \\
     & = \left( \int_0^1 J_1(\mu_{0,1} r) r^2 dr \right) \left( \int_0^\pi \sin^2 \theta d \theta \right)
     = \frac{\pi}{\mu_{0,1}^3} \int_0^{\mu_{0,1}}  r^2 J_1(r) dr \\
     & = \frac{\pi}{\mu_{0,1}^3} \int_0^{\mu_{0,1}} \frac{d}{dr} \left( r^2 J_2(r) \right) dr 
     = \frac{\pi}{\mu_{0,1}} J_2 \left( \mu_{0,1} \right) \not = 0,
\end{align*}
this contradicts $(h_*, \mathbf{1}_B x_2)_{L^2(\RR_+^2)}=0$. 
\end{proof}

Although Proposition \ref{prop: calS coercivity} provides a satisfactory coercivity estimate, the poor regularity of orthogonality directions $\mathbf{1}_B x_2 \in L^2(\RR_+^2)$ and $\partial_{x_1} \wl \in L^2(\RR_+^2)$ causes trouble for modulation analysis later (the same as the original version of conditions given in \eqref{coercivity: original version}). To address this issue, we replace them by smooth functions as their approximation in $L^2(\RR^2)$, which inherits the coercivity and non-degeneracy of $\tilde \calS$. 

\begin{corollary}[Modified spectral gap property for $\tilde \calS$]
\label{cor: calS coercivity}
There exists $(W_1,W_2) \in \left( C_c^\infty(\RR_+^2) \right)^2$ such that the following spectral gap for $\tilde \calS: L^2(\RR_+^2) \to L^2(\RR_+^2)$ holds:
    \be
    \left( \tilde \calS h ,h \right)_{L^2(\RR_+^2)} \ge \frac{a}{2} \| h \|_{L^2(\RR_+^2)}^2, \text{ for all } h \in L^2(\RR_+^2), \; h \perp W_1, \text{ and } h \perp W_2,
    \label{spectral gap: modified}
    \ee
    with $a>0$ the constant determined in Proposition \ref{prop: calS coercivity}. Additionally, the matrix
    \be
    \det \left( \begin{matrix}
        \left(-\wl, W_1 \right)_{L^2(\RR_+^2)} & \left( \partial_{x_1} \wl, W_1 \right)_{L^2(\RR_+^2)} \\
        \left(- \wl, W_2 \right)_{L^2(\RR_+^2)} & \left( \partial_{x_1} \wl, W_2 \right)_{L^2(\RR_+^2)}
    \end{matrix}
    \right) \not = 0.
    \label{nondegeneracy}
    \ee
    \end{corollary}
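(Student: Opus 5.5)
The plan is a perturbation argument. We take $W_1,W_2\in C_c^\infty(\RR^2_+)$ to be $L^2$-close approximations of the rough directions $V_1:=\mathbf{1}_B x_2$ and $V_2:=\partial_{x_1}\wl$ appearing in Proposition \ref{prop: calS coercivity}. Concretely, we mollify $V_j\mathbf{1}_{\{x_2>\eta\}}$ for a small $\eta>0$, so that the supports stay compact inside $\RR^2_+$ (the odd extension is implicit). We then set $\varepsilon:=\max_j\|W_j-V_j\|_{L^2(\RR^2_+)}$, which can be made arbitrarily small since $C_c^\infty(\RR^2_+)$ is dense in $L^2(\RR^2_+)$.

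For the spectral gap \eqref{spectral gap: modified}, let $h\perp W_1,W_2$. We decompose $h=h_0+P h$, where $P$ is the $L^2$-orthogonal projection onto $\mathrm{span}\{V_1,V_2\}$ and $h_0\perp V_1,V_2$. The Gram matrix $G$ of $V_1,V_2$ is invertible. Indeed, $V_1$ is odd in $x_1$-reflection parity, being even in $x_1$, while $\partial_{x_1}\wl$ is odd in $x_1$, so $(V_1,V_2)=0$ and both norms are positive. From $(h,V_j)=(h,V_j-W_j)$ we get $|(h,V_j)|\le\varepsilon\|h\|$, hence $\|Ph\|\le C\varepsilon\|h\|$. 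Since $\tilde\calS$ is bounded on $L^2(\RR^2_+)$, expanding the quadratic form gives
\[
(\tilde\calS h,h)\ge(\tilde\calS h_0,h_0)-C\|h\|\|Ph\|\ge a\|h_0\|^2-C\varepsilon\|h\|^2\ge (a-C'\varepsilon)\|h\|^2,
\]
using Proposition \ref{prop: calS coercivity} for $h_0$ and $\|h_0\|\ge(1-C\varepsilon)\|h\|$. Choosing $\varepsilon$ small yields the constant $a/2$.

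For the nondegeneracy \eqref{nondegeneracy}, the determinant depends continuously on $(W_1,W_2)$ in $L^2$. It therefore suffices to check it at $(W_1,W_2)=(V_1,V_2)$. The entry $(-\wl,V_1)=-I[\wl]=-\pi\neq0$. The entries $(\partial_{x_1}\wl,V_1)$ and $(-\wl,V_2)$ vanish by parity in $x_1$, since $\wl$ is even in $x_1$. The remaining entry $(\partial_{x_1}\wl,V_2)=\|\partial_{x_1}\wl\|^2_{L^2(\RR^2_+)}>0$. Hence the limiting determinant is $-\pi\|\partial_{x_1}\wl\|^2\neq0$, and it stays nonzero for $\varepsilon$ small.

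The only mildly delicate point is ensuring that the approximants lie in $C_c^\infty(\RR^2_+)$ and not merely in $C_c^\infty(\RR^2)$. We also need that $\partial_{x_1}\wl\in L^2$. This holds since $\wl$ is Lipschitz with compact support, so the truncation away from $x_2=0$ costs only $o(1)$ in $L^2$. Everything else is a routine perturbation estimate.
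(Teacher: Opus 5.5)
Your proposal is correct and follows essentially the same route as the paper. Both arguments approximate $\mathbf{1}_B x_2$ and $\partial_{x_1}\wl$ in $L^2(\RR^2_+)$ by elements of $C_c^\infty(\RR^2_+)$, split $h\perp W_1,W_2$ into a part orthogonal to the rough directions plus an $O(\varepsilon)$ part in their span to perturb Proposition \ref{prop: calS coercivity}, and obtain \eqref{nondegeneracy} by continuity from the limiting determinant, whose off-diagonal entries vanish by $x_1$-parity. The differences are cosmetic: you assign $W_1\approx \mathbf{1}_B x_2$ and $W_2\approx\partial_{x_1}\wl$, the reverse of the paper, which only flips the limiting determinant to $-I[\wl]\,\|\partial_{x_1}\wl\|^2$; you also make the projection bound explicit through the Gram matrix, which the paper leaves implicit, and the phrase ``odd in $x_1$-reflection parity'' should just say that $\mathbf{1}_B x_2$ is even in $x_1$.
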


    \begin{proof} 
        For any $\ep_1>0$, by the density of $C_c^\infty(\RR_+^2)$ class in $L^2(\RR_+^2)$, there is $(W_1, W_2) \in C_c^\infty(\RR_+^2)$ such that
        \be
         \big \| W_1 - \partial_{x_1} \wl \big\|_{L^2(\RR_+^2)}  + \big\| W_2 - \mathbf{1}_B x_2 \big\|_{L^2(\RR_+^2)} < \ep_1.
         \label{W1W2: close}
        \ee
        We will show that with $\ep_1 > 0$ small enough, the $W_1, W_2$ chosen above will satisfy \eqref{spectral gap: modified} and \eqref{nondegeneracy} by a simple perturbation analysis. 
        
        For \eqref{spectral gap: modified}, we consider $h \in L^2(\RR_+^2)$ satisfying $h \perp W_i$ (i=1,2) and $\| h \|_{L^2(\RR_+^2)} =1$. Decompose it into
        \[  h = h^\perp + h^0 \in \left({\rm span} \{ \mathbf{1}_B x_2, \pa_{x_1} \wl \} \right)^\perp  \oplus  {\rm span} \{ \mathbf{1}_B x_2, \pa_{x_1} \wl \}.  \] 
        The $L^2$ closedness \eqref{W1W2: close} easily implies that for some $C > 0$, 
        \[ \| h^0 \|_{L^2(\RR_+^2)} \le C \ep_1, \quad \| h^\perp \|_{L^2(\RR_+^2)} \ge 1 - C\ep_1. \] 
        Together with the coercivity \eqref{spectral gap: version 1} and that $\tilde S$ is bounded on $L^2(\RR^2_+)$, this yields
    \begin{align*}
        \left( \tilde \calS h, h \right)_{L^2(\RR_+^2)}
        & = \left( \tilde \calS h^\perp, h^\perp \right)_{L^2(\RR_+^2)}
        + 2\left( \tilde \calS h^\perp, h^0 \right)_{L^2(\RR_+^2)} + \left( \tilde \calS h^0, h^0 \right)_{L^2(\RR_+^2)} \\
        & \ge a \| h^\perp \|_{L^2(\RR_+^2)}^2 + O(\ep_1) \ge a + O(\ep_1).  \end{align*}
        Hence \eqref{spectral gap: modified} holds when we choose $0< \ep_1 \ll a$. 
        
        For \eqref{nondegeneracy}, note that 
         \begin{align*}
       & \det \left(
        \begin{matrix}
            -\left(\wl, \partial_{x_1} \wl \right)  & (\partial_{x_1} \wl , \partial_{x_1} \wl) 
            \\
            -(\wl, \mathbf{1}_B x_2)  & \left( \partial_{x_1} \wl, \mathbf{1}_B x_2 \right)
        \end{matrix}
        \right)  \\
        = & \det \left(
        \begin{matrix}
            0  &  \| \pa_{x_1} \wl \|_{L^2}^2 
            \\
            -I[\wl]  & 0 \end{matrix}
        \right)
        = \| \partial_{x_1} \wl \|_{L^2(\RR_+^2)}^2 I[\wl] \not =0.
    \end{align*}
    Then with $\ep_1$ small enough, \eqref{W1W2: close} will imply \eqref{nondegeneracy}. 
    \end{proof}

\subsection{Coercivity of $\calQ$}
\label{sec 33}
In this section, we study the coercivity of $\calQ$ \eqref{def: Q} by obtaining coercivity in the exterior region and applying the previous interior coercivity Proposition \ref{prop: calS coercivity} and Corollary \ref{cor: calS coercivity}. 

\begin{proposition}[Coercivity of $\calQ$]
\label{prop: coercivity}
    There exists $\delta_0>0$ such that
    \be
    \calQ[h] \ge \delta_0 \| h \|_{L^2(\RR_+^2)}^2  + \frac{1}{2c_L^2} \int_{\RR_+^2} \errl(\mathbf x) h(\mathbf x) d\mathbf x, \text{ for all }  h \in \calK_{\delta_0},
    \label{coercivity of Q}
    \ee
    where the restriction set $\calK_{\delta_0}$ is defined by
    \begin{align}
        \calK_{\delta_0}:= 
    \Big\{ h \in L^2(\RR_+^2): 
    & \;\wl + h \ge 0 \text{ on $\RR_+^2$}, \; \text{h  is odd in $x_2$}, 
     \notag
     \\
     & x_2 h \in L^1(\RR_+^2), \;   \| h\|_{L^2}< \delta_0, \; 
    h \perp W_1, \text{ and } h\perp W_2  \Big\}.
    \label{restriction set}
    \end{align}
    with $W_1, W_2 \in C^\infty_c(\RR^2_+)$ from Corollary \ref{cor: calS coercivity}. 
    \end{proposition}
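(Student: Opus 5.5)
The plan is to split $h\in\calK_{\delta_0}$ spatially into three pieces and treat each with a different mechanism. Fix a small parameter $\eta>0$, to be chosen depending only on $a$, $c_L$, $W_1,W_2$, and then take $\delta_0\ll\eta$. Write $h=h_{in}+h_{an}+h_{far}$ with $h_{in}=\mathbf 1_B h$, $h_{an}=\mathbf 1_{B_{1+\eta}\setminus B}h$ and $h_{far}=\mathbf 1_{B_{1+\eta}^c}h$. Since $\wl$ vanishes outside $B$, the constraint $\wl+h\ge 0$ forces $h_{an},h_{far}\ge 0$ on $\RR^2_+$. By \eqref{error: def}, $\errl$ vanishes on $B$ and $\errl\ge c_L^2(1-(1+\eta)^{-2})x_2\gtrsim \eta x_2$ on $B_{1+\eta}^c$. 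Hence the linear term in \eqref{def: Q} equals $c_L^{-2}\int(h_{an}+h_{far})\errl\ge 0$, and it controls $\eta\|x_2h_{far}\|_{L^1}$. I would use half of this linear term to produce the right-hand side of \eqref{coercivity of Q}, and the other half to absorb errors from $h_{far}$.

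Next I expand $(\calS h,h)=\frac1{2c_L^2}\|h\|^2-\frac12(Gh,h)$, with $G=(-\Delta_{\RR^2_+})^{-1}$ and kernel $\frac1{4\pi}\log\bigl(1+\frac{4x_2y_2}{|\mathbf x-\mathbf y|^2}\bigr)$, into the diagonal and cross pairings of the three pieces. The pieces are estimated as follows.
\begin{enumerate}[(i)]
\item The $h_{in}$--$h_{in}$ part is exactly $(\tilde\calS h_{in},h_{in})$.
\item Every pairing involving $h_{an}$ is small. Since $G$ maps $L^1$ of a bounded set to $L^2_{loc}$ with a log kernel, $|(Gh_{an},f)|\le C\|h_{an}\|_{L^1}\|f\|_{L^2(B_3)}+\dots\le C\eta^{1/2}\|h_{an}\|_{L^2}\|f\|_{L^2}$. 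Thus these pairings are $O(\eta^{1/2})\|h\|^2$, and they are absorbed by $\frac1{2c_L^2}\|h_{an}\|^2$ together with the interior coercivity.
\item Every pairing involving $h_{far}$ is bounded by $C\delta_0\|x_2h_{far}\|_{L^1}$. For $\mathbf x\in B_{1+\eta}$ and $\mathbf y$ in the exterior, the kernel is $\lesssim y_2$, up to a harmless log near the annulus that I would treat with the same $L^1$--$L^2$ bound. This gives $|(Gh_{far},h_{in}+h_{an})|\le C\|h\|_{L^2}\|x_2h_{far}\|_{L^1}$. For the far--far term I use the energy interpolation inequality \eqref{energy ineq}: $(Gh_{far},h_{far})=2E[h_{far}]\le C\|x_2h_{far}\|_{L^1}\|h_{far}\|_{L^2}$. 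Since $\|h\|<\delta_0$ and $\delta_0\ll\eta$, all these terms are at most $\frac1{4c_L^2}\int\errl h_{far}$.
\end{enumerate}

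It remains to get coercivity of $(\tilde\calS h_{in},h_{in})$, even though the orthogonality $h\perp W_1,W_2$ is imposed on $h$ rather than on $h_{in}$. I would write $(h_{in},W_i)=-(h_{an}+h_{far},W_i)$. The annular contribution is bounded by $\|W_i\|_{L^2(B_{1+\eta}\setminus B)}\|h_{an}\|\le C\eta^{1/2}\|h\|$. Since $h_{far}\ge0$, $W_i$ is bounded, and $\supp W_i\subset\{x_2\ge c\}$, the far contribution is at most $C\|x_2h_{far}\|_{L^1}$. Decompose $h_{in}=k+\sum_i\beta_iW_i^*$, where $W_i^*$ is the dual basis to $\{W_i\}$ and $k\perp W_1,W_2$; then $|\beta_i|\lesssim\eta^{1/2}\|h\|+\|x_2h_{far}\|_{L^1}$. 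Corollary \ref{cor: calS coercivity} gives $(\tilde\calS k,k)\ge\frac a2\|k\|^2$, and hence
\[
(\tilde\calS h_{in},h_{in})\ge\tfrac a2\|h_{in}\|^2-C\eta^{1/2}\|h\|^2-C\|h\|\,\|x_2h_{far}\|_{L^1}.
\]
The last term is again $\le C\delta_0\|x_2h_{far}\|_{L^1}$ and is absorbed by the linear term.

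Collecting everything yields
\[
\calQ[h]\ge\tfrac a2\|h_{in}\|^2+\tfrac1{2c_L^2}\bigl(\|h_{an}\|^2+\|h_{far}\|^2\bigr)-C\eta^{1/2}\|h\|^2+\tfrac1{2c_L^2}\int\errl h,
\]
where for $h_{far}$ the diagonal term $\frac1{2c_L^2}\|h_{far}\|^2$ is kept and only its $G$-part is absorbed. Choosing $\eta$ small and then $\delta_0$ small gives \eqref{coercivity of Q}. The main obstacle is the degeneracy of $\errl$ at $\partial B$: the exterior linear control is lost near the circle. This is precisely why the thin annulus is separated and handled by $L^2$ smallness ($\eta^{1/2}$ factors) rather than by the sign structure, and the delicate point is checking that every cross term involving $h_{an}$ really carries a positive power of $\eta$.
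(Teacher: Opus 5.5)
Your proof is correct. It uses the same three-region splitting as the paper (the disk $B$, a thin annulus $B_{1+\eta}\setminus B$, the far region) and the same ingredients:
the sign of $h$ off $B$, the bound $\errl\ge c_L^2(1-(1+\eta)^{-2})x_2$ off $B_{1+\eta}$, the small area of the annulus, the interpolation inequality \eqref{energy ineq}, and Corollary \ref{cor: calS coercivity}.

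The difference is organizational. The paper argues by contradiction with $g_n=h_n/\|h_n\|_{L^2}$:
\begin{enumerate}
\item failure of \eqref{coercivity of Q} forces $\int\errl g_n\to0$;
\item the annulus/far splitting then gives $E[g_n\mathbf 1_{B^c}]\to0$;
\item all cross terms are disposed of at once by Cauchy--Schwarz for the positive energy form, $|(Gu,v)|\le 2E[u]^{1/2}E[v]^{1/2}$, so that $(\calS g_n,g_n)=(\tilde\calS g_n,g_n)+o(1)$;
\item the Corollary is applied to $g_n$ itself.
\end{enumerate}
The last step works because $\tilde\calS$ acts on all of $L^2(\RR^2_+)$ and $(\tilde\calS h,h)=(\tilde\calS h_{in},h_{in})+\frac1{2c_L^2}\|h\mathbf 1_{B^c}\|^2$. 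So your dual-basis transfer of the orthogonality from $h$ to $h_{in}$ is correct but unnecessary.

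Your direct route (fix $\eta$, then $\delta_0$) has the merit of producing an explicit $\delta_0$. It pays for this with pointwise kernel estimates, and the one in (iii) is misstated. The bound $|(Gh_{far},h_{in}+h_{an})|\le C\|h\|\,\|x_2h_{far}\|_{L^1}$ cannot hold uniformly, for two reasons:
\begin{itemize}
\item $y_2^{-1}\|G(\cdot,\mathbf y)\|_{L^2(B_{1+\eta}\setminus B)}$ diverges logarithmically as $\mathbf y$ approaches the points where $\partial B_{1+\eta}$ meets the $x_1$-axis;
\item for $\mathbf x\in B$, $\mathbf y\notin B_{1+\eta}$, the bound $G\lesssim y_2$ holds only with an $\eta$-dependent constant (crudely $y_2/(\pi\eta^2)$).
\end{itemize}
What your splitting actually yields, and all you need, is $C\eta^{1/2}\|h\|^2+C_\eta\|h\|\,\|x_2h_{far}\|_{L^1}$. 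The near-diagonal $h_{an}$--$h_{far}$ part is handled by your $L^1$--$L^2$ bound together with $\|h_{an}\|_{L^1}\lesssim\eta^{1/2}\|h\|$. Consequently $\delta_0$ must be chosen small depending on $\eta$, not merely $\delta_0\ll\eta$. Alternatively, you can replace the kernel estimates by the energy Cauchy--Schwarz bound, as the paper does.
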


    \begin{proof}[Proof of Proposition \ref{prop: coercivity}] The result can be established by contradiction, and we will prove the result as follows.
   
    \mbox{}
    
   \noindent  \textbf{Step 1. Setup of contradiction.}
        We argue by contradiction and assume that there exists a sequence of functions $\{ h_n \}$ satisfying
\be
\omega_n := \wl + h_n \ge 0 \text{ on } \RR_+^2, \quad h_n \perp W_1, \quad h_n \perp W_2,
\label{sequence: prop 1}
\ee
and
\be
h_n \to 0\quad {\rm in}\,\,L^2(\RR^2_+) \quad {\rm as}\,\, n \to \infty, 
\label{sequence: prop 2}
\ee
such that
\be
\liminf_{n \to \infty} \frac{1}{\| h_n \|_{L^2}^2} \tilde \calQ[h_n] \le 0,
\label{sequence: prop 3}
\ee
where
\[
\tilde \calQ[h] = \left( \calS h ,h \right)_{L^2(\RR_+^2)} + \frac{1}{2c_L^2} \int_{\RR_+^2} \errl (\mathbf x) h(\mathbf x) d \mathbf x.
\]
In particular, from \eqref{sequence: prop 1}, we know that $h_n \ge 0$ on $\RR_+^2 \setminus B$. Additionally, if we define $g_n := \frac{1}{\| h_n \|_{L^2}} h_n$, then $\| g_n\|_{L^2} = 1$ and $ \frac{1}{\| h_n \|_{L^2}^2} \tilde \calQ[h_n]$ can be further written as
\[
 \liminf_{n \to \infty}\frac{1}{\| h_n \|_{L^2}^2} \tilde \calQ[h_n] =  \liminf_{n \to \infty} \left( (\calS g_n, g_n)_{L^2(\RR_+^2)} + \frac{1}{2c_L^2\| h_n \|_{L^2}}\int_{\RR_+^2} \errl(\mathbf x) g_n(\mathbf x) d\mathbf x \right) \le 0.
\]
Let us take a subsequence if necessary to obtain
\be
\lim_{n \to \infty} \left( (\calS g_n, g_n)_{L^2(\RR_+^2)} + \frac{1}{2c_L^2\| h_n \|_{L^2}}\int_{\RR_+^2} \errl(\mathbf x) g_n(\mathbf x) d\mathbf x \right) \le 0,
\label{contradi assum 2}
\ee
which possibly equals to $-\infty$.

\mbox{}

\noindent \textbf{Step 2. Vanishing of exterior integral $\int_{\RR_+^2} \errl(\mathbf x) g_n(\mathbf x) d \mathbf x$.} In this step, we will show 
\be
\lim_{n \to \infty} \int_{\RR_+^2} \errl(\mathbf x) g_n(\mathbf x) d \mathbf x = 0.
\label{limit: zero}
\ee
This will be the starting point for reducing the analysis to the interior operator $\tilde \calS$.

Using the interpolation inequality \eqref{energy ineq}, we can obtain that
\begin{align*}
    (\calS g_n, g_n)_{L^2(\RR_+^2)} & =  \frac{1}{2c_L^2} \| g_n\|_{L^2(\RR_+^2)}^2 - \frac{1}{2} \left(  \left( -\Delta_{\RR_+^2} \right)^{-1} g_n,g_n \right)_{L^2(\RR_+^2)} = \frac{1}{2c_L^2} \| g_n \|_{L^2(\RR_+^2)}^2  - E[g_n] \\
    & \ge \frac{1}{2c_L^2} \| g_n \|_{L^2(\RR_+^2)}^2 - C_* \| x_2 g_n \|_{L^2(\RR_+^2)} \| g_n \|_{L^2(\RR_+^2)}
    =\frac{1}{2c_L^2}  - C_* \| x_2 g_n \|_{L^2(\RR_+^2)}.
\end{align*}
From the definition of $\errl(\mathbf x)$ \eqref{error: def} and $g_n \ge 0$ in $B^c$, it is easy to check that
\begin{align*}
     \| x_2 g_n \mathbf{1}_{B_2^c} \|_{L^1(\RR_+^2)}
     = \int_{|\mathbf x| \ge 2, \; \mathbf x \in \RR_+^2} x_2 g_n(\mathbf x) d \mathbf x
     \le \frac{1}{2} \int_{|\mathbf x| \ge 2, \; \mathbf x \in \RR_+^2} \errl(\mathbf x) g_n(\mathbf x) d \mathbf x.
\end{align*}
Besides, by Cauchy-Schwartz inequality,
\begin{align*}
     \| x_2 g_n \mathbf{1}_{B_2} \|_{L^1(\RR_+^2)}
    \le \| x_2 \mathbf 1_{B_2} \|_{L^2(\RR_+^2)} \| g_n \|_{L^2(\RR_+^2)} \le C \| g_n \|_{L^2(\RR_+^2)},
\end{align*}
for some uniform constant $C =\| x_2 \mathbf 1_{B_2} \|_{L^2(\RR_+^2)} >0 $. Hence
\begin{align}
    & \quad \left( \calS g_n, g_n \right)_{L^2(\RR_+^2)} + \frac{1}{2c_L^2 \| h_n\|_{L^2}} \int_{\RR_+^2} \errl(\mathbf x) g_n(\mathbf x) d \mathbf x \notag\\ 
    & \ge \frac{1}{2c_L^2} - \frac{C_*}{2} \int_{\RR_+^2} \errl(\mathbf x) g_n(\mathbf x) d \mathbf x - CC_* +  \frac{1}{2c_L^2 \| h_n\|_{L^2}} \int_{\RR_+^2} \errl(\mathbf x) g_n(\mathbf x) d \mathbf x \notag \\
    & \ge \frac{1}{2c_L^2} - CC_* + \left( \frac{1}{2 c_L^2 \| h_n\|_{L^2}} - \frac{C_*}{2} \right) \int_{\RR_+^2} \errl(\mathbf x) g_n(\mathbf x) d \mathbf x ,
    \quad \text{ for all } n \ge 1.
    \label{contradi step 2}
\end{align}
Taking $N \gg 1$ so that $c_L^2 \| h_n \|_{L^2} \ll C_*^{-1}$ as $n \ge N$ from \eqref{sequence: prop 2}, combined with \eqref{contradi assum 2}, we conclude that there exists some constant $C_2 >0$ such that
\[
0 \le \frac{1}{4 c_L^2 \| h_n \|_{L^2}} \int_{\RR_+^2} \errl(\mathbf x) g_n(\mathbf x) d \mathbf x \le C_2, \text{ uniformly for } n \ge N.
\]
This implies \eqref{limit: zero} by using \eqref{sequence: prop 2} again.

\mbox{}

\noindent \textbf{Step 3. Vanishing of exterior energy.} In this step, we improve vanishing property \eqref{limit: zero} to the vanishing of exterior energy. More precisely, we claim that  
\be
E[g_n \mathbf{1}_{B^c}] \to 0, \text{ as } n \to \infty.
\label{vanish: exterior region}
\ee

Indeed, for any $\ep > 0$, we decompose this exterior function as 
\be 
g_n \mathbf{1}_{B^c} = g_{n; \ep}^{\rm ext} + g_{n; \ep}^{\rm mid},\quad {\rm where}\,\,g_{n; \ep}^{\rm ext} = g_n \mathbf{1}_{B_{1+\ep}^c},\quad g_{n; \ep}^{\rm mid} = g_n \mathbf{1}_{B_{1+\ep} \cap B^c}.
\ee 
Then applying the interpolation inequality \eqref{energy ineq},  we compute 
\[  E[g_{n; \ep}^{\rm mid}] \le C_* \| g_{n; \ep}^{\rm mid} \|_{L^2} \| x_2 g_{n; \ep}^{\rm mid}\|_{L^1} \le C_* \| x_2\|_{L^2(B_{1+\ep} \cap B^c)} \| g_n\|_{L^2}^2 \le C' \ep^\frac 12 \]
for some $C' > 0$; further combined with the lower bound of $\errl$ \eqref{error: def}, $g_n \mathbf{1}_{B^c}\ge 0$, and the vanishing property \eqref{limit: zero}, we obtain 
\begin{align*}
  E[g_{n; \ep}^{\rm ext}] \le C_* \| g_{n; \ep}^{\rm ext} \|_{L^2} \| x_2 g_{n; \ep}^{\rm ext}\|_{L^1} \le \frac{C_*}{1-(1+\e)^2}  \| g_n \|_{L^2} \int_{\RR^2_+} \errl(\mathbf x) g_n(\mathbf x) d \mathbf x = o_{n}(1). 
\end{align*}
Therefore, a Cauchy-Schwartz inequality yields that for any $\ep > 0$, we have 
\[ \limsup_{n \to \infty} E[g_n \mathbf{1}_{B^c}] \le 2 \limsup_{n \to \infty}\left( E[g_{n; \ep}^{\rm ext}] +E[g_{n; \ep}^{\rm mid}]\right) \le 2C' \ep^\frac 12.  \]
The vanishing of energy \eqref{vanish: exterior region} follows by taking $\ep \to 0$. 

\mbox{}

\noindent \textbf{Step 4. Conclusion of proof by interior coercivity.} 

Notice that \eqref{energy ineq} implies that $E[g_n \mathbf{1}_B] \le C \| x_2 \|_{L^2(B)}$ is uniformly bounded, we can compute by Cauchy-Schwartz and \eqref{vanish: exterior region} that 
\[ \left| E[g_n] - E[g_n \mathbf{1}_B] \right| \le \left(|E[g_n \mathbf{1}_B]| + |E[g_n \mathbf{1}_{B^c}]| \right)\cdot |E[g_n \mathbf{1}_{B^c}]|  = o_n(1).   \]
Thus recalling the definition of $\calS$ \eqref{second variation} and $\tilde \calS$ \eqref{linear op: cutoff version}, we compute 
\begin{align*}
    & \quad  \left( \calS g_n ,g_n \right)_{L^2(\RR_+^2)}
     = \frac{1}{2 c_L^2} \| g_n \|_{L^2(\RR_+^2)}^2 - E[g_n] \notag \\
    & = \frac{1}{2c_L^2} \|g_n \|_{L^2(\RR_+^2)}^2 - E[g_n \mathbf{1}_B] + o_n(1)  = \left( \tilde \calS g_n, g_n \right) + o_n(1) \ge a + o_n(1),
\end{align*}
where in the last inequality we applied the coercivity of $\tilde \calS$ in Corollary \ref{cor: calS coercivity} with $a > 0$. Noticing that $\int_{\RR_+^2} \errl(\mathbf x) g_n(\mathbf x) d\mathbf x \ge 0$, the lower bound of $ \left( \calS g_n ,g_n \right)_{L^2(\RR_+^2)}$ leads to a contradiction with our assumption \eqref{contradi assum 2}. Thus we have completed the proof.
    \end{proof}

\mbox{}

\subsection{Modulation}
\label{sec 34}
Based on the coercivity result established in Proposition \ref{prop: coercivity}, together with the modulation method, we are going to complete the proof of orbital stability of Lamb-Chaplygin dipole under sign condition and odd symmetry with a quantitative description. First of all, we will use the implicit function theorem to introduce the modulation parameters $(\alpha,\beta)$ to ensure the residue to satisfy certain orthogonal conditions.

\begin{proposition}[Set up of modulation analysis]
\label{thm: implicit function thm}
   There exists $\ep_{\rm mod} >0$, $C_{\rm mod}> 0$, and an open set $\Omega_{\rm mod} \subset \RR^2$ with $(0, 0) \in \Omega_{\rm mod}$ such that for any $\omega \in L^2(\RR_+^2)$ satisfying
   \[
   \| \omega - \wl \|_{L^2(\RR_+^2)} < \ep_{\rm mod},
   \]
   there exists uniquely $(\alpha, \beta) \in \Omega_{\rm mod}$ such that
   \be
     \omega(\cdot + \beta \bm e_1) - (1+\a) \wl  := h \perp W_1, W_2. \label{eqmodortho}
   \ee
   Moreover, the map $L^2 \owns \omega \mapsto (\alpha, \beta) \in \RR \times \RR$ is $C^1$, and we have 
      \be 
   |\a| + |\beta| \le C_{\rm mod} \| \omega - \wl \|_{L^2(\RR_+^2)}. \label{eqestalphabeta}
   \ee 
\end{proposition}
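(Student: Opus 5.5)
We apply the implicit function theorem to the map
\[
\Phi: L^2(\RR_+^2) \times \RR^2 \to \RR^2, \qquad \Phi(\omega; \a, \beta) := \Big( \big(\omega(\cdot + \beta \mathbf e_1) - (1+\a)\wl, W_1\big), \ \big(\omega(\cdot + \beta \mathbf e_1) - (1+\a)\wl, W_2\big) \Big).
\]
The translation acts on $\omega$, which is only $L^2$. We therefore move it onto the smooth test functions: by a change of variables,
\[
\big(\omega(\cdot+\beta\mathbf e_1), W_i\big) = \big(\omega, W_i(\cdot - \beta \mathbf e_1)\big).
\]
Since $W_i \in C_c^\infty(\RR_+^2)$, the map $\beta \mapsto W_i(\cdot - \beta\mathbf e_1) \in L^2$ is $C^1$ (indeed $C^\infty$), with derivative $-\pa_{x_1} W_i(\cdot - \beta \mathbf e_1)$. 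The map $\omega \mapsto \Phi$ is affine and bounded in $\omega$. Hence $\Phi$ is jointly $C^1$ on $L^2 \times \RR^2$, with partial derivatives that are continuous in all variables. This is exactly where the smoothness of the modified directions from Corollary \ref{cor: calS coercivity} is used; with $\mathbf 1_B x_2$ and $\pa_{x_1}\wl$ themselves this step would fail.

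Next we check the base point and nondegeneracy. At $(\omega;\a,\beta) = (\wl;0,0)$ we have $\Phi = 0$. The Jacobian in $(\a,\beta)$ there has columns $\pa_\a \Phi = \big((-\wl, W_1), (-\wl, W_2)\big)$ and $\pa_\beta \Phi = \big((\wl, -\pa_{x_1}W_1), (\wl, -\pa_{x_1} W_2)\big) = \big((\pa_{x_1}\wl, W_1), (\pa_{x_1}\wl, W_2)\big)$, the latter after integrating by parts ($\wl$ is Lipschitz, $W_i$ compactly supported). This is precisely the matrix in \eqref{nondegeneracy}, whose determinant is nonzero by Corollary \ref{cor: calS coercivity}.

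The implicit function theorem then gives neighborhoods $U \ni \wl$ in $L^2$ and $\Omega_{\rm mod} \ni (0,0)$ in $\RR^2$, together with a unique $C^1$ map $\omega \mapsto (\a,\beta)$ with $\Phi = 0$. Choosing $\ep_{\rm mod}$ so that the $L^2$ ball of radius $\ep_{\rm mod}$ around $\wl$ lies in $U$ gives existence and uniqueness in $\Omega_{\rm mod}$.

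For the bound \eqref{eqestalphabeta}, we use that the derivative of the implicit map is $-(D_{(\a,\beta)}\Phi)^{-1} D_\omega \Phi$. Shrinking the neighborhoods if needed, this derivative is uniformly bounded, because $D_{(\a,\beta)}\Phi$ stays invertible with a bounded inverse by continuity and $\|D_\omega\Phi\| \le \|W_1\|_{L^2}+\|W_2\|_{L^2}$. The mean value inequality along the segment from $\wl$ to $\omega$ (which lies in the ball), together with $(\a,\beta)(\wl) = (0,0)$, then yields $|\a|+|\beta| \le C_{\rm mod}\|\omega-\wl\|_{L^2}$.

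The only delicate point is the joint $C^1$ regularity in $\beta$. Moving the translation onto $W_i$ resolves it, so I expect no real obstacle beyond bookkeeping of the neighborhood sizes.
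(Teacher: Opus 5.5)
Your proposal is correct and is essentially the paper's argument. The paper applies the implicit function theorem to the same map, written as $\bigl(\omega-(1+\a)\wl(\cdot-\beta\mathbf e_1),\, W_i(\cdot-\beta\mathbf e_1)\bigr)$, which equals yours after a change of variables; it gets $C^1$ regularity from the smoothness of $W_i$ and identifies the $(\a,\beta)$-Jacobian at $(\wl;0,0)$ with the nondegenerate matrix \eqref{nondegeneracy}. Your mean-value derivation of \eqref{eqestalphabeta} simply spells out the step the paper calls a standard consequence of the implicit function theorem.
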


\begin{proof}
    We define a mapping $\calG: \RR \times \RR \times L^2 \to \RR^2$ as
    \begin{align*}
        \calG (\alpha, \beta, \omega)
        =  \left(
        \begin{matrix}
            \left( \omega - (1+ \alpha) \wl(\cdot -  \beta \mathbf{e}_1) , W_1 (\cdot -  \beta \mathbf{e}_1) \right)_{L^2(\RR_+^2)} \\
             \left( \omega - (1+ \alpha) \wl(\cdot -  \beta \mathbf{e}_1), W_2(\cdot -  \beta \mathbf{e}_1)\right)_{L^2(\RR_+^2)}
        \end{matrix}
        \right) 
    \end{align*}
    where $\calG (0,0,\wl) = \left( 0, 0 \right)^T$. For any $\omega$ close to $\wl$ in $L^2(\RR^2_+)$, we look for $(\a, \beta) = (\a[\omega], \beta[\omega]) $ such that $ \calG(\a, \beta, \omega) = 0$. 
    
  Firstly, we notice that $G(\alpha,\beta,\omega)$ is $C^1$ continuous in a small neighborhood of $(0,0, \wl) \in \RR \times \RR \times L^2(\RR^2_+)$, thanks to  $(W_1, W_2) \in \left( C_c^\infty(\RR_+^2) \right)^2$ and $\wl \in H^1(\RR^2)$. Next, we calculate the related Jacobian matrix at $(0,0,\wl)$, which is of form
    \begin{align*}
        \na_{\alpha,\beta} \calG (0,0, \wl)
        & =\left(
        \begin{matrix}
            -\left(\wl, W_1 \right)_{L^2(\RR_+^2)}  & (\partial_{x_1} \wl , W_1)_{L^2(\RR_+^2)}  \\
            -(\wl, W_2)_{L^2(\RR_+^2)}  & \left( \partial_{x_1} \wl, W_2\right)_{L^2(\RR_+^2)} 
        \end{matrix}
        \right).
    \end{align*}
    This matrix is not degenerate in view of \eqref{nondegeneracy}. 
    
    Then the existence and uniqueness of $(\a, \beta)$, the $C^1$ continuity and the estimate \eqref{eqestalphabeta} are a standard application of implicit function theorem in Banach space. 
\end{proof}

Our last preparation is the lower and upper bound of the distance functional $\bmrmd$ defined in \eqref{eqdefdist} by $L^2$ and $L^1_{x_2}$ norms.

\begin{lemma} \label{lem: dist} There exists $C_\bmrmd > 1$ such that for $h \in ( L^2 \cap L^1_{x_2})(\RR^2)$, 
\begin{align}
\| h \|_{L^2}^2 & \le  \bmrmd[h] \le \| x_2 h\|_{L^1} + \| h \|_{L^2}^2,
\label{eqdistcoer}\\
 \| x_2 h\|_{L^1} &\le C_\bmrmd \left( \bmrmd[h] + \sqrt{\bmrmd[h]} \right).
 \label{eqdistcoer2}
\end{align}
\end{lemma}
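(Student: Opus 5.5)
The plan is to bound the weight $c_L^{-2}\errl$ pointwise and then split $\| x_2 h\|_{L^1}$ into a compact region and an exterior region.

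By \eqref{error: def}, on $\RR^2_+$ we have $c_L^{-2}\errl(\mathbf x) = \mathbf 1_{B^c}(\mathbf x)\,(1-|\mathbf x|^{-2})\,x_2$. By odd symmetry it equals $\mathbf 1_{B^c}(1-r^{-2})|x_2|$ on all of $\RR^2$, if we read $\errl\cdot\omega$ in \eqref{eqdefdist} with absolute values on the odd extension. Hence $0\le c_L^{-2}\errl \le |x_2|$ pointwise. This immediately gives \eqref{eqdistcoer}:
\begin{itemize}
\item the lower bound holds because the $L^1$ term is nonnegative;
\item the upper bound holds because $c_L^{-2}\|\errl\, h\|_{L^1}\le \|x_2 h\|_{L^1}$.
\end{itemize}

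For \eqref{eqdistcoer2}, split $\RR^2$ into $B_2$ and $B_2^c$.

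\emph{Exterior region.} On $B_2^c$ we have $1-r^{-2}\ge 3/4$, so $|x_2|\le \tfrac43 c_L^{-2}\errl$. Therefore
\[
\|x_2 h\mathbf 1_{B_2^c}\|_{L^1}\le \tfrac43\,\bmrmd[h].
\]

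\emph{Interior region.} On $B_2$ the weight $c_L^{-2}\errl$ degenerates near $\partial B$, so we do not use it. Instead we use Cauchy--Schwarz:
\[
\|x_2 h\mathbf 1_{B_2}\|_{L^1}\le \|x_2\|_{L^2(B_2)}\|h\|_{L^2}\le C\sqrt{\bmrmd[h]},
\]
where the last step uses the lower bound in \eqref{eqdistcoer}.

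Adding the two regions gives \eqref{eqdistcoer2} with $C_\bmrmd=\max\{4/3,\|x_2\|_{L^2(B_2)}\}$, enlarged to exceed $1$ if needed.

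There is no real obstacle here. The only delicate point is the degeneracy of $\errl$ near $\partial B$, which is why the $\sqrt{\bmrmd}$ term appears; it is handled by absorbing the near-disk region into the $L^2$ part. One must also be careful with the convention of absolute values and the odd extension in the definition of $\bmrmd$.
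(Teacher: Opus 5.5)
Your proof is correct and follows the paper's argument. You get \eqref{eqdistcoer} from the pointwise bound $0\le c_L^{-2}|\errl|\le |x_2|$, and \eqref{eqdistcoer2} by splitting into $B_2$, where Cauchy--Schwarz gives $\|x_2\|_{L^2(B_2)}\|h\|_{L^2}\le 2\sqrt{\pi}\,\sqrt{\bmrmd[h]}$, and $B_2^c$, where $1-r^{-2}$ is bounded below (the paper uses the cruder factor $2$ instead of your $4/3$, giving $C_\bmrmd=2\sqrt{\pi}>1$).
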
 
\begin{proof}
The first estimate \eqref{eqdistcoer} is self-evident from the definition of $\bmrmd[h]$ and $\errl$ \eqref{error: def}. For second estimate \eqref{eqdistcoer2}, we exploit that 
\begin{align*} \| x_2 h \|_{L^1(B_2^c)}& \le 2 c_L^{-2} (\errl, h),\\
 \| x_2 h \|_{L^1(B_2)}& \le \| x_2 \|_{L^2(B_2)} \| h \|_{L^2} \le 2 \cdot \sqrt{4\pi}  \| h \|_{L^2} = 2\sqrt{\pi} \|h\|_{L^2}. 
\end{align*}
So \eqref{eqdistcoer2} holds with $C_\bmrmd = 2\sqrt\pi$.
\end{proof} 

Finally, we are in place to prove the main result Theorem \ref{thm: orbital stability sec3}.
\begin{proof}[Proof of Theorem \ref{thm: orbital stability sec3}]

For notational simplicity, denote 
\[ \ep := (\bmrmd [ \omega_0 - \wl])^\frac 12 \le \sqrt{\delta_1}. \] 
In particular, \eqref{eqdistcoer} implies that $\| \omega_0 - \wl \|_{L^2} \le \ep$. We denote $\omega$ as an admissible solution with initial data $\omega_0$ in the sense of Definition \ref{def: admissible solu}. 

We will first propagate $L^2$ estimates and modulation by bootstrap analysis and Lyapunov functional; meanwhile, we also prove \eqref{orbital stability: main result} and \eqref{smallness: parameters alpha}. Finally, we show \eqref{smallness: parameters beta} through modulation estimates. We stress that the constant $\delta_1$ will be required to be sufficiently small during the proof. 

\mbox{}

\noindent \textbf{Step 1. Set up of bootstrap analysis.}

Recall $\delta_0 >0$ from Proposition \ref{prop: coercivity}, $C_{\rm mod}>0$ from Proposition \ref{thm: implicit function thm}, and $C_\bmrmd>0$ from Lemma \ref{lem: dist}. We choose the coefficient $C_0, C_1 > 0$ in \eqref{orbital stability: main result} by
    \be
    \begin{split}
     C_0 &= 4 \max \left\{  \sqrt{\left( \min \left\{ \delta_0 c_L^2, 1/4 \right\}\right)^{-1} \left( c_L^2 + \frac{2C_\bmrmd^2}{I[\wl]} \right)} , 1 + C_{\rm mod} \| \wl \|_{L^2} \right \},\\ 
     C_1 &= 2 \max \left\{ \frac{C_\bmrmd}{I[\wl]} (2 + C_0), C_{\rm mod} \right\}.
     \end{split}
     \label{C0: choice}
    \ee
    We will finally pick $C_0 = (C_0')^2$ in Proposition \ref{prop: coercivity}.

    Now we define the exit time $T_{\max} \in [0,\infty]$ as 
    \be 
  T_{\max} := \sup\Big \{ T \ge 0: \exists  (\alpha(t), \beta(t)) \in \RR \times \RR\,\, \text {s.t.} \,\,\eqref{bootassump1}-\eqref{bootassump3}\,\, \text{hold for any} \, t \in [0,T] \Big \}.
    \label{Tmax: def}
    \ee 
    Here we define on $t \in [0,T_{\max}]$\footnote{For notational simplicity, we denote $[0, T_{\max}] = [0,\infty)$ when $T_{\max} = \infty$. }
    \be h(t, \cdot) := \omega(t, \cdot + \beta(t) \bm e_1) - (1+\a(t))\wl,\label{eqdefdecomph}
    \ee 
    and the bootstrap assumptions are 
    \begin{itemize}
        \item $L^2$-closedness: 
        \be 
    \| h(t) \|_{L^2} \le C_0 \ep.  \label{bootassump1} 
        \ee 
        \item Smallness of amplitude: 
        \be |\a(t)| \le C_1 \ep.  \label{bootassump2}
        \ee 
        \item Orthogonality: 
        \be h(t) \perp_{L^2} W_1, W_2,  \label{bootassump3} 
        \ee 
        where $W_1, W_2 \in C^\infty_c(\RR^2_+)$ are from Corollary \ref{cor: calS coercivity}. 
    \end{itemize}
    
    We now show $T_{\max} > 0$ using Proposition \ref{thm: implicit function thm}. Notice that $\| \omega_0 - \wl \|_{L^2} \le \ep$. With $\ep_{\rm mod} > 0$ from in Proposition \ref{thm: implicit function thm}, when $\delta_1 \ll \ep_{\rm mod}^2$ small enough, we can apply Proposition \ref{thm: implicit function thm} to obtain $\a(0), \beta(0)$ such that \eqref{bootassump3} holds at $t = 0$ and 
    \[ |\a(0)| \le C_{\rm mod} \ep,\quad \| h(0) \|_{L^2} 
    \le ( 1 + C_{\rm mod} \| \wl \|_{L^2} ) \ep. \] 
    In view of the choice of $C_0, C_1$ \eqref{C0: choice}, the bootstrap assumptions \eqref{bootassump1}-\eqref{bootassump2} at $t = 0$ are satisfied and even improved. Now the $L^2$-continuity of $\omega(t)$  (Definition \ref{def: admissible solu} (1)) and the continuity of $\omega \mapsto (\a, \beta)$ from Proposition \ref{thm: implicit function thm} enables us to construct $(\a(t), \beta(t))$ for some $[0, \tilde T]$ with $\tilde T > 0$, such that \eqref{bootassump1}-\eqref{bootassump3} hold. That verifies $T_{\max} \ge \tilde T > 0$. 
    
\mbox{}

\noindent \textbf{Step 2. Lyapunov functions.}

\noindent \textit{Step 2.1. Construction of Lyapunov functions.}

 Recall the Lagrangian functional $\calF$ from \eqref{functional: def} and the difference functional \eqref{def: Q}. We use the decomposition \eqref{eqdefdecomph} to compute 
\begin{align}
    &  \calF[\omega(t)] - \calF[\wl]  =  \calF[\omega(t, \cdot + \beta\bm e_1)] - \calF[\wl] = \calQ[\a(t)\wl + h(t)] \nonumber \\
    =& \calQ[h(t)] + \alpha(t)^2 \left( \calS \wl, \wl \right)_{L^2} + 2 \alpha(t) \Big( \calS \wl, h (t)\Big)_{L^2} \nonumber
    \\
    =& \calQ[h(t)]-\frac{1}{2}\alpha(t)^2 I[\wl] - \alpha(t) I[h(t)]
        + \frac{\alpha(t)}{c_L^2} \int_{\RR_+^2} \errl (\mathbf x) h(t,\mathbf x) d \mathbf x   ,  \label{eqdeficiency of Q}
\end{align}
where we exploited the $x_1$-translation invariance of $\calF$, and the algebraic relation \eqref{error: def} and $\tilde \calS \wl = -\frac 12 \mathbf{1}_B x_2$ from \eqref{F(0)}. We also compute the difference of impulse 
\begin{align*}
    I[\omega(t)] - I[\wl] = I[\omega(t,\cdot+\beta(t)\bm e_1)] - I[\wl] = \a(t) I[\wl] + I[h(t)],
\end{align*}
and its square
\[ (I[\omega(t)] - I[\wl])^2 = 2I[\wl] \left( \frac{1}{2} \alpha^2(t) I[\wl] + \alpha(t)I[h(t)] \right) + I[h(t)]^2. 
\]

Therefore, we introduce the Lyapunov functions on $[0, T_{\max}]$ as 
\begin{align}
    \calA(t) :=& \calF[\omega(t)] - \calF[\wl] + \frac{1}{2c_L^2 I[\wl]} \left( I[\omega(t)] - I[\wl] \right)^2  \label{eqdefcalAt} \\
    =& \calQ[h(t)] + \frac{\alpha(t)}{c_L^2} \int_{\RR_+^2} \errl (\mathbf x) h(t,\mathbf x) d \mathbf x  + \frac{1}{2c_L^2I[\wl]} (I[h(t)])^2,  \label{eqdefcalAt2} \\
\calB(t) :=& I[\omega(t)] - I[\wl].
\end{align}
Since $\omega(t,\cdot)$ satisfies the conservation laws (see Definition \ref{def: admissible solu} (3)), we have 
\[ \calA(t) = \calA(0),\quad \calB(t) = \calB(0),\quad \forall \,\,t \ge 0.\]

\mbox{}

\noindent \textit{Step 2.2. Upper bound at $t = 0$.}

Next, we claim the following upper bounds of $\calA(0), |\calB(0)|$ as
\be 
\calA(0) \le c_L^{-2} \left( c_L^2 + \frac{2C_\bmrmd^2}{I[\wl]} \right) \ep^2, \quad |\calB(0)| \le 2 C_\bmrmd \ep. \label{equpperLyap}\ee
Indeed, denoting $h_* = \omega_0 - \wl$, we can apply \eqref{eqdistcoer2} and $\bmrmd[h_*] = \ep < 1$ to compute
\be 
|\calB(0)| = |I[h_*]| \le \| x_2 h_* \|_{L^1} \le C_{\bmrmd}(\ep^2 + \ep) \le 2 C_\bmrmd \ep,
\ee 
and thereafter
\begin{align*}
\calA(0) =& \calQ[h_*] + \frac{1}{2c_L^2 I[\wl]} (I[h_*])^2 \\
\le& \frac{1}{c_L^2} \int_{\RR_+^2} \errl(\mathbf x) h_*(\mathbf x) d \mathbf x + \frac{1}{2c_L^2} \| h_* \|_{L^2(\RR_+^2)}^2 + \frac{1}{2c_L^2 I[\wl]} (I[h_*])^2 \\
\le&\left(1 + \frac{(2C_\bmrmd)^2}{2c_L^2 I[\wl]} \right) \ep^2.
\end{align*}

\mbox{}

\noindent \textit{Step 2.3. Lower bound on $t \in [0, T_{\max}]$.} 

Then, we use $\calA(t)$ and $\calB(t)$ to control $\bmrmd[h(t)]$ and $\a(t)$. We will prove the following estimates on $t \in [0, T_{\max}]$:
\begin{align}
    \calA(t) &\ge \min \left\{ \delta_0, (4c_L^2)^{-1} \right\} \bmrmd[h(t,\cdot)], \label{eqLyaplower1}\\
    |\a(t)| &\le (I[\wl])^{-1} \left( |\calB(t)| + C_\bmrmd (\bmrmd[h(t)] + \sqrt{\bmrmd[h(t)]} ) \right). \label{eqLyaplower2}
\end{align}

Indeed, \eqref{eqLyaplower2} easily follows from $\calB(t) = \a(t) I[\wl] + I[h(t)]$ and the estimate \eqref{eqdistcoer2}. For \eqref{eqLyaplower1}, we first notice that with $\delta_1 \ll 1$, the smallness of amplitude \eqref{bootassump2} implies $|\a(t)| \le \frac 14$. Plus the orthogonality condition \eqref{bootassump3} and the coercivity of $\calQ[h]$ from Proposition \ref{prop: coercivity}, we compute from the formula \eqref{eqdefcalAt2} that
\bee
 \calA(t) \ge  \delta_0 \| h \|_{L^2}^2 + \frac{1}{4 c_L^2} \int_{\RR_+^2} \errl (\mathbf x) h(t, \mathbf{x}) d \mathbf x \ge \min \left\{ \delta_0, (4c_L^2)^{-1} \right\} \bmrmd[h(t)].
\eee 
Here the last inequality exploits $h\big|_{B^c \cap \RR_+^2} = \omega(t, \cdot + \beta \bm e_1)\big|_{B^c \cap \RR_+^2}  \ge 0$, so that $\int_{\RR_+^2} \errl (\mathbf x) h(t, \mathbf{x}) d \mathbf x = \| \errl h(t,\cdot) \|_{L^1(\RR_+^2)}$.

\mbox{}

\noindent \textit{Step 2.4. Uniform boundedness of $\bmrmd[h(t)]$ and $|\a(t)|$.} 

Finally, we combine the conservation laws and estimates of $\calA(t)$ and $\calB(t)$ from above to bound $\bmrmd[h(t)]$ and $|\a(t)|$ on $t \in [0, T_{\max}]$. Recall the definition of $C_0, C_1$ in \eqref{C0: choice}. From $\calA(t) = \calA(0)$ and its estimates \eqref{equpperLyap} and \eqref{eqLyaplower1}, we obtain 
\be 
 \bmrmd[h(t)] \le \left(\min \left\{ \delta_0, (4c_L^2)^{-1} \right\}\right)^{-1}  c_L^{-2} \left( c_L^2 + \frac{2C_\bmrmd^2}{I[\wl]} \right) \ep^2 \le \frac{C_0^2}{4} \ep^2,\quad \forall \,t \in [0, T_{\max}].  \label{equniestboot1}
\ee 
Further using $\calB(t) = \calB(0)$ and its estimates \eqref{equpperLyap} and \eqref{eqLyaplower2}, we have 
\be
 |\a(t)| \le (I[\wl])^{-1} \left( 2C_\bmrmd + C_\bmrmd \left( \frac{C_0^2 \e}{4} + \frac{C_0}{2} \right) \right) \ep \le \frac{C_1}{2} \ep, \quad \forall \,t \in [0, T_{\max}].  \label{equniestboot2} 
\ee
Here we required $\delta_1 \ll 1$ such that $C_0 \ep \le C_0\sqrt{ \delta_1} \le 1$. In particular, \eqref{eqdistcoer} and \eqref{equniestboot1} implies that 
\be 
\| h(t) \|_{L^2} \le \frac{C_0}{2} \ep,\quad \forall \,t \in [0, T_{\max}].  \label{equniestboot3} 
\ee

\mbox{}

\noindent \textbf{Step 3. Propagation of bootstrap assumption and proof of  \eqref{orbital stability: main result}, \eqref{smallness: parameters alpha}.}  

In this step, we will prove $T_{\max} = \infty$ by contradiction. As a corollary, the uniform bounds \eqref{equniestboot1}-\eqref{equniestboot2} now hold on $[0, \infty)$, verifying \eqref{orbital stability: main result}, \eqref{smallness: parameters alpha} with $C_{\rm stab} = C_0^2$. We also obtain continuity of $t \mapsto (\a(t), \beta(t))$ since they are determined locally by Proposition \ref{thm: implicit function thm}. 

Now assume $T_{\max} < \infty$. We will construct $(\a(t), \beta(t))$ with bootstrap assumptions holding for $t \in [T_{\rm max}, T_{\rm max} + \tilde T]$, where $\tilde T > 0$. This contradicts the definition of $T_{\rm max}$. Indeed, the uniform bounds \eqref{equniestboot2}-\eqref{equniestboot3} improve the bootstrap assumption \eqref{bootassump1}-\eqref{bootassump2} at $t = T_{\max}$. With $\delta_1 \ll 1$ small enough, this implies 
\[ \| \omega(T_{\max},\cdot + \beta(T_{\max}) \bm e_1) - \wl \|_{L^2} \le \frac 12 (C_0 + C_1 \| \wl \|_{L^2}) \ep \le \frac 12 \ep_{\rm mod}.\]
Applying Proposition \ref{thm: implicit function thm} to $\omega(T_{\max},\cdot + \beta(T_{\max}) \bm e_1)$, there exists $(\tilde \a(T_{\max}), \tilde \beta(T_{\max})) \in \Omega_{\rm mod}$ such that $\tilde h :=  \omega(T_{\max},\cdot + (\tilde \beta (T_{\max})+ \beta(T_{\max})) \bm e_1) - (1 + \tilde \a(T_{\max}))\wl$ is orthogonal to $W_1, W_2$. Since $(\a(T_{\max}), 0) \in \Omega_{\rm mod}$ from $|\a(T_{\max})| \le \frac 12 C_1 \ep$ when $\delta_1 \ll 1$, the uniqueness statement in Proposition \ref{thm: implicit function thm} indicates that $(\tilde \a(T_{\max}), \tilde \beta(T_{\max})) = (\a(T_{\max}), 0)$. 

Thereafter, similar to the proof of $T_{\max} > 0$ in Step 1, we can exploit the $L^2$-continuity of $\omega(t,\cdot + \beta(T_{\max})\bm e_1)$ (Definition \ref{def: admissible solu} (1)) and the continuity of $\omega \mapsto (\a, \beta)$ from Proposition \ref{thm: implicit function thm} enables us to continuously extend $(\a(t), \beta(t))$ to slightly beyond $T_{\max}$ with \eqref{bootassump1}-\eqref{bootassump3} holding true. That contradicts the definition of $T_{\max}$ and concludes $T_{\max} = \infty$ as well as  \eqref{orbital stability: main result}, \eqref{smallness: parameters alpha}.

\mbox{}

\noindent \textbf{Step 4. Proof of \eqref{smallness: parameters beta}.}

Lastly, we prove the derivative estimate of $\alpha$ and $\beta$. By choosing the test function $\varphi(t, \mathbf{x}) = \eta(t) \psi(\mathbf x)$ in \eqref{weak sol: def}, we obtain the time-wise equation 
\be \frac{d}{dt} (\omega(t), \psi) = \int_{\RR^2} \omega(t)  \mathbf{u}(t) \cdot \nabla \psi dx,\quad \mathbf{u} := \nabla^\perp \Delta^{-1} \omega.
  \label{weak sol: def2}
  \ee 
in the distributional sense for any test function $\psi \in C_c^\infty(\RR^2)$.
Plugging in the decomposition \eqref{eqdefdecomph} into \eqref{weak sol: def2} yields
\be \begin{split}
    &\frac{d}{dt} \left( h, \varphi \right) + (\beta'-1) 
   \Big[  (-\partial_{x_1}\wl, \varphi) + \left( h+ \alpha \wl, \partial_{x_1} \varphi \right) \Big] + \alpha'(\wl, \varphi) 
   \\ 
   =& \left( (h+\a \wl) \nabla^\perp \Delta^{-1}\wl ,\nabla \varphi \right) + \left( \wl \nabla^\perp \Delta^{-1}(h+\a \wl) ,\nabla \varphi \right) \\
   -& \left(h+\a \wl, \pa_{x_1} \varphi\right) + \left( (h + \a \wl) \nabla^\perp \Delta^{-1}(h+\a \wl) ,\nabla \varphi\right),
\end{split} \label{eqweakeqh}
\ee
for any test function $\varphi \in C_c^\infty(\RR^2)$.
Here we exploited the equation of $\wl$:
\[
\nabla \wl \cdot \nabla^\perp \Delta^{-1} \wl = \pa_{x_1} \wl.
\]
From the estimates \eqref{equniestboot1} and \eqref{equniestboot2}, we estimate 
\bee
  \| h + \a \wl \|_{L^2} \le \left(C_0 + C_1 \| \wl \|_{L^2}\right) \ep.
  \eee
Moreover, recalling \eqref{energy ineq}, Lemma \ref{lem: dist}, \eqref{equniestboot1} and \eqref{equniestboot2}, there exists $\tilde C_*>0$ such that
\begin{align*}
& \quad \| \na^\perp \Delta^{-1} (h+ \a \wl) \|_{L^2(\RR_+^2)}^2
= 2 E[h+\a \wl]  \\
& \le 2C_* \| h + \a  \wl \|_{L_{x_2}^1(\RR_+^2)} \| h + \a \wl \|_{L^2(\RR_+^2)}  \\
& \le \tilde C_* \left( C_0^2 + C_1^2 \| \wl \|_{L^2 \cap L_{x_2}^1(\RR_+^2)}^2 \right) \ep^2.
\end{align*}
Since $\omega(t) \in C^0([0,\infty), L^2(\RR^2) \cap L_{x_2}^1(\RR^2))$ from Definition \ref{def: admissible solu} (1) and $(\a(t), \beta(t)) \in C^0_{loc}(\RR)$ from Step 3, we thus obtain continuity of $t \mapsto h + \a \wl$ and  $t \mapsto \nabla^\perp \Delta^{-1} (h + \a \wl)$ in $L^2(\RR^2)$.

Therefore, we choose the test functions in \eqref{eqweakeqh} as $W_1$ and $W_2$ respectively. Using that $W_1, W_2 \in C_c^\infty(\RR^2)$, we can easily bound all inner products involving $h$ in \eqref{eqweakeqh} by $O\left(\|\nabla W_i \|_{L^2 \cap L^\infty}\cdot \ep \right)$, and moreover they are continuous in time. Together with the fact that $(h(t,\cdot),W_i) \equiv 0$ ($i=1,2$) for any $t \ge 0$, we obtain that 
   \begin{align*}
   \begin{cases}
        -(\beta'-1) \left( (\partial_{x_1} \wl , W_1) + O(\ep)\right) + \alpha' (\wl
       , W_1) =O(\ep), \\
       -(\beta'-1) \left( (\partial_{x_1} \wl , W_2) + O(\ep)\right) + \alpha' (\wl
       , W_2) = O(\ep).
   \end{cases}
   \end{align*}
   Combining with the nondegeneracy given in \eqref{nondegeneracy} and the choice of $0< \delta_1 \ll 1$ small enough, we can invert the coefficient matrix on the left to obtain $(\a',\beta') \in C^0(\RR)$ and that for some $C_2 > 0$, 
   \[
   |\beta'(t)-1| + |\alpha'(t)| \le C_2 \ep, \quad  \text{for any } t \ge 0.
   \]
   This concludes the proof of \eqref{smallness: parameters beta} and thus Theorem \ref{thm: orbital stability sec3}.
\end{proof}

\bibliographystyle{plain}
\bibliography{Bib-1}

\end{document}